\newcommand{\defeq}{:=}
\newcommand{\argmin}{\mathrm{argmin}}
\newcommand{\diag}[1]{{\bf Diag}\left({#1}\right)}
\newcommand{\norm}[1]{\ensuremath{\left\lVert #1 \right\rVert}}
\newcommand\nat{\mathbb N}
\newcommand\rea{\mathbb R}
\newcommand{\R}{\rea}
\newcommand\poly{\mathrm{poly}}  %usage \poly(n)
\newcommand\calA{\mathcal{A}}
\newcommand\calE{\mathcal{E}}
\newcommand\calO{\mathcal{O}}
\newcommand\calT{\mathcal{T}}
\newtheorem{theorem}{Theorem}[section]
\newtheorem{lemma}[theorem]{Lemma}
\newtheorem{corollary}[theorem]{Corollary}
\newtheorem{fact}[theorem]{Fact}
\newtheorem{definition}[theorem]{Definition}
\let\originalleft\left
\let\originalright\right
\renewcommand{\left}{\mathopen{}\mathclose\bgroup\originalleft}
  \renewcommand{\right}{\aftergroup\egroup\originalright}
\def\setof#1{\left\{#1  \right\}}
\def\diag#1{\textsc{Diag}\left( #1 \right)}
\newcommand\bb{b}
\newcommand\cc{c}
\newcommand\dd{d}
\newcommand\ee{\boldsymbol{\mathit{e}}}
\newcommand\ff{f}
\renewcommand\gg{g}
\newcommand\vv{\boldsymbol{\mathit{v}}}
\newcommand\ww{\boldsymbol{\mathit{w}}}
\newcommand\yy{y}
\newcommand\zz{z}
\newcommand\xx{x}
\newcommand\hh{h}
\newcommand\xxtil{\tilde{x}}
\renewcommand\AA{A}
\newcommand\CC{C}
\newcommand\RR{R}
\newcommand\WW{W}
\newcommand\Otil{\widetilde{O}}
\global\long\def\eps{\epsilon}
\newcommand\supp[1]{\mathrm{supp}\left\{#1\right\}}
\newcommand\Taylor{\calT_f^{s-1}}
\newcommand{\inner}[2]{\left\langle #1,\, #2 \right\rangle}
\newcommand{\sigmaMS}{\sigma$-$\textrm{\emph{MS}}_p}
\newcommand{\GMS}{\textrm{\emph{GMS}}_p}
\title{Convex optimization with $p$-norm oracles}
\author{Deeksha Adil\\Institute for Theoretical Studies\\
ETH Zürich\\deeksha.adil@eth-its@ethz.ch \and Brian Bullins\\
Department of Computer Science\\
Purdue University\\ bbullins@purdue.edu \and Arun Jambulapati\\Independent Researcher\\jmblpati@gmail.com \and Aaron Sidford\\ Department of Computer Science\\Stanford University\\sidford@stanford.edu }
\begin{document}

\maketitle

\begin{abstract}
 In recent years, there have been significant advances in efficiently solving $\ell_s$-regression using linear system solvers and $\ell_2$-regression [Adil-Kyng-Peng-Sachdeva, J. ACM'24]. Would efficient smoothed $\ell_p$-norm solvers lead to even faster rates for solving $\ell_s$-regression when $2 \leq p < s$?
In this paper, we give an affirmative answer to this question and show how to solve $\ell_s$-regression using 
$\tilde{O}(n^{\frac{\nu}{1+\nu}})$ iterations of solving smoothed $\ell_p$ regression problems, where $\nu := \frac{1}{p} - \frac{1}{s}$.
To obtain this result, we provide improved accelerated rates for convex optimization problems when given access to an \emph{$\ell_p^s(\lambda)$-proximal oracle}, which, for a point $\cc$, returns the solution of the regularized problem 
$\min_{\xx} \ff(\xx) + \lambda \|\xx-\cc\|_p^s$. Additionally, we show that these rates for the $\ell_p^s(\lambda)$-proximal oracle 
are optimal for algorithms that query in the span of the outputs of the oracle, and we further apply our techniques to settings of high-order and quasi-self-concordant optimization.
\end{abstract}

 \newpage
\tableofcontents

\newpage

\section{Introduction}

A prominent approach to efficiently solving convex optimization problems is to reduce them to solving a sequence of linear systems. For example, interior point methods reduce linear programming, semidefinite programming, many empirical risk minimization problems, and more to essentially solving a sequence of \emph{$\ell_2$ (or least-squares) regression problems} or, more broadly, to a sequence of linear systems~\cite{karmarkar1984new, lee2019solving, nesterov1994interior, renegar1988polynomial, wright1997primal}. More recently, improvements in quasi-self-concordant~\cite{bach2010self,carmon2020acceleration,karimireddy2018global} and high-order~\cite{bullins2020highly,nesterov2006cubic, nesterov2021implementable} optimization have yielded new ways of using linear system solvers for solving well-established machine learning problems, including $\ell_\infty$ and logistic regression.

Similarly, recent advances in \emph{iterative refinement} have shown how to reduce $\ell_p$-regression, a natural data analysis problem, to solving a sequence of linear systems~\cite{adil2019iterative,adil2022fast}.  $\ell_p$ regression problems appear in a wide range of applications, including semi-supervised learning~\cite{alamgir2011phase, flores2022analysis, kyng2015algorithms}, data clustering and learning~\cite{elmoataz2015p,elmoataz2017game}, and low-rank matrix approximations~\cite{chierichetti2017algorithms}.

In this paper, we study a natural generalization: rather than reducing to $\ell_2$-regression, i.e.,
\[
\min_{\xx \in \rea^d}\|\AA\xx-\bb\|_2^2,
\]
we consider reductions to \emph{smoothed} $\ell_p$-regression, i.e., problems of the form
\[
\min_{\xx \in \rea^d}\gg^{\top}\xx + \|\AA\xx-\bb\|_2^2 + \|\CC\xx-\dd\|_p^p\,.
\]
We ask: \emph{would smoothed $\ell_p$-regression solvers yield faster rates for solving optimization problems?}

Addressing this question could advance the foundations of optimization theory and provide new tools for structured and high-order optimization beyond $\ell_p$-regression. For example, as we will see, this question has connections to highly-smooth optimization in different norms, ball-acceleration in different geometries, and quasi-self-concordant optimization.

\paragraph{Efficient reductions from $\ell_s$- to $\ell_p$-regression.}
We consider the particular problem of efficiently reducing $\ell_s$-regression to smoothed $\ell_p$-regression for $2 \leq p < s$. We provide a new efficient reduction that solves this problem. Moreover, we obtain this result by an optimization theoretic framework of broader utility, which indeed has implications for structured optimization and higher-order optimization.
We formally define $\ell_s$ regression in Definition~\ref{def:regression} and then provide Theorem~\ref{thm:regression-main}, which contains our main result for this problem (proved in Section~\ref{sec:regression}).

\begin{definition}[$\ell_s$-Regression]
\label{def:regression}
    Given $2\leq s<\infty$, $\epsilon>0$, $\AA\in \mathbb{R}^{n \times d}$, and $\bb\in \mathbb{R}^n$, the \emph{$\ell_s$-regression problem} is to find $\widetilde{\xx}\in \rea^d$ such that
\begin{equation}\label{eq:sReg}
   \|\AA\widetilde{\xx}-\bb\|_s^s \leq (1+\epsilon)\min_{\xx\in \rea^d}  \|\AA\xx-\bb\|^s_s.
\end{equation}
We call such an $\tilde{x}$ an $\epsilon$-approximate solution to the problem.
\end{definition}

\begin{restatable}[From $\ell_s$-Regression to Smoothed $\ell_p$-Regression]{theorem}{SNormReg}
\label{thm:regression-main}
There is an algorithm that, given  $\epsilon > 0$, $s > p \geq 2$, computes an $\epsilon$-approximate solution to $\ell_s$-regression (Definition~\ref{def:regression}) in at most $O( s \cdot n^{\frac{\nu}{1+\nu}}\log^2\frac{n}{\epsilon})$ iterations for $\nu \defeq \frac{1}{p} - \frac{1}{s}$, each of which can be implemented in $O(d)$ time plus the time needed to solve $\tilde{O}(1)$\footnote{We use $\Tilde{O}$ to hide $p,s$, and $\poly\log n$ factors.} smoothed $\ell_p$-regression problems of the form
\[
\min_{\xx \in \rea^d}\gg^{\top}\xx + \|\tilde{\AA}\xx-\tilde{\bb}\|_2^2 + \|\CC\xx-\dd\|_p^p\,.
\]
\end{restatable}

Prior reductions include one of \cite{bubeck2018homotopy} which established that $\tilde{O}(n^{\frac{1}{2}-\frac{1}{p}})$ iterations of comparable cost sufficed for $p = 2$. This was then improved by~\cite{adil2019iterative} to $\tilde{O}(n^{\frac{p-2}{3p-2}})$, i.e., by a factor of $\Omega(n^{\frac{(p-2)^2}{2p(3p-2)}})$. For a general $p$, the previous state-of-the-art for such a reduction is due to \cite{adil2020faster}. Though not explicitly shown, their results imply an analogous \Cref{thm:regression-main} with an iteration bound of $\tilde{O}(n^{\frac{s\nu}{s-1}})$.

Our work improves upon this rate for all $2\leq p<s$ by a factor of $\Omega(n^{\frac{\nu^2}{1+\nu}})$ (similar to the prior improvement of \cite{adil2019iterative} over \cite{bubeck2018homotopy} for $p = 2$). 

\paragraph{Optimal acceleration of $\ell_p^s(\lambda)$-proximal oracles.}

To prove \Cref{thm:regression-main}, we consider a general optimization problem of independent interest. Specifically, we consider the problem of minimizing a convex function $\ff$ given (approximate) access to what we call an \emph{$\ell_p^s(\lambda)$-proximal oracle}.

\begin{definition}[$\ell_p^s(\lambda)$-Proximal Oracle]\label{def:lpsProx} For $p, s \geq 2$, $\lambda > 0$, an \emph{$\ell_p^s(\lambda)$-proximal oracle} for $\ff:\R^d\rightarrow \R$ is an oracle that, when queried at a ``centering point'' $\cc\in \R^d$, returns 
\[
\tilde{x} \in \argmin_{\xx \in \R^d} \ff(\xx) + \lambda \|\xx-\cc\|_p^s.
\]
\end{definition}

This problem is well-studied in the special case when $p = s = 2$, where it corresponds to the standard (quadratic) proximal oracle~\cite{parikh2014proximal}. In particular, algorithms that access these oracles have been studied in convex optimization theory (see e.g.,~\cite{boyd2011distributed, combettes2007douglas, rockafellar1976monotone}). A notable use of this oracle is in the \emph{accelerated proximal point method}~\cite{guler1992new, nesterov1983method}, which computes an $\epsilon$-optimal solution with $O(\lambda^{1/2}\norm{\xx_0-\xx^*}_2 \eps^{-1/2})$ queries to the oracle, where $\xx^* \in \R^d$ is used to denote the minimizer of $\ff$ throughout the paper. This query complexity is optimal~\cite{nemirovskij1983problem}, and the method has played an important role in numerous algorithmic advances~\cite{frostig2015regularizing, lin2015universal, parikh2014proximal, schmidt2017minimizing, shalev2014accelerated}, as well as in the high-order acceleration frameworks of \cite{monteiro2013accelerated, gasnikov2019near, carmon2022optimal}, and the ball acceleration framework of \cite{carmon2020acceleration} (which has multiple applications \cite{carmon2021thinking, carmon2023resqueing, carmon2024whole, jambulapati2024closing}).

However, outside of the above examples, optimal rates for solving convex optimization with $\ell_p^s(\lambda)$-proximal oracles are not known, to the best of our knowledge. This is particularly relevant for $\ell_p$ regression as, building upon an approach of~\cite{adil2022fast}, solving $\ell_s$-regression using the smoothed $\ell_p$ oracle reduces to acceleration with our $\ell_p^s(\lambda)$-proximal oracle.
Consequently, to prove \Cref{thm:regression-main} we give new efficient algorithms (Algorithm~\ref{alg:LSFProx}), for which we prove the following.
\begin{theorem}[Accelerated Optimization with $\ell_p^s(\lambda)$-Proximal Oracle]\label{thm:ConceptualProx-main}
   Algorithm~\ref{alg:LSFProx} given $s > p \geq 2$, $\epsilon > 0$, $\xx_0 \in \R^d$, outputs $\xxtil \in \R^d$ with $\ff(\xxtil)- \ff(\xx^{\star})\leq \epsilon$ using $O((s\lambda \|\xx_0-\xx^{\star}\|_p^s / \epsilon)^{\frac{1}{s(1+\nu)}})$ queries to an $\ell_p^s(\lambda)$-proximal oracle. Each iteration can be implemented in $O(d)$ time plus time of the $\ell_p^s(\lambda)$-proximal oracle.
\end{theorem}

Though the rates of Theorem~\ref{thm:ConceptualProx-main} may look unnatural at first glance, we in fact prove that, for any $p, s \geq 2$, they are optimal for broad classes of algorithms! Specifically, we show (Section~\ref{sec:LB}) that there exists a function such that any zero-respecting algorithm, i.e., one that at every iteration queries an $\ell_p^s(\lambda)$-proximal oracle centered at a point $\cc \in \R^d$ which lies in the span of the outputs of the oracle, requires at least $\Omega((\lambda \|\xx^{\star}\|_p^s/\epsilon)^{\frac{1}{s(1+\nu)}})$ iterations to return an $\epsilon$-suboptimal point.

Note that there is a broader literature of lower bounds for related problems, e.g., convex optimization in non-Euclidean norms with gradient oracles~\cite{guzman2015lower} and in parallel settings~\cite{diakonikolas2019lower}. However, we are are unaware of such lower bounds for our particular $\ell_p^s(\lambda)$-proximal oracle.

\begin{restatable}[$\ell_p^s(\lambda)$-Proximal Oracle Optimization Lower Bound]{theorem}{LowerBound}\label{thm:LowerBound-main}
     For every $\ell_p^s(\lambda)$-proximal zero-respecting algorithm (Definition~\ref{def:zero-resp-alg}) given $p,s \geq 2$, $\lambda > 0$, $\epsilon > 0$, there is a function $\ff : \rea^d \rightarrow \rea$ such that any $\xx$ in the first $k = O((\lambda \|\xx^{\star}\|_p^s / \epsilon)^{\frac{1}{s(1+\nu)}})$ iterations of the algorithm satisfies $
    \ff(\xx)-\ff(\xx^{\star})\geq \epsilon$.
\end{restatable}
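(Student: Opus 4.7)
The plan is to adapt the standard Nemirovski-style ``chain'' lower bound to the $\ell_p^s(\lambda)$-proximal oracle setting. Define the candidate hard function
\[
\ff(\xx) = \frac{a}{s} \sum_{i=1}^{d-1} |x_i - x_{i+1}|^s + \frac{a}{s} |x_d|^s - b\, x_1
\]
on $\rea^d$, with parameters $a, b > 0$ and dimension $d$ to be chosen in terms of $\lambda$, $s$, $p$, $\|\xx^\star\|_p$, and $\epsilon$. A standard KKT analysis of the unregularized problem (interior conditions forcing adjacent differences $x_j - x_{j+1}$ to be constant, pinned to $\delta := (b/a)^{1/(s-1)}$ by the boundary conditions at $j = 1$ and $j = d$) yields $x_j^\star = (d-j+1)\delta$, with $\ff(\xx^\star) = -\tfrac{s-1}{s}\, d\, b\, \delta$ and $\|\xx^\star\|_p = \Theta(\delta\, d^{1+1/p})$.

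The central structural step is the zero-respecting propagation: to show that, after $k$ oracle queries, every iterate of the algorithm lies in $\Span(e_1, \ldots, e_k)$. The chain structure of $\ff$ restricts $\partial_{x_j}\ff$ to depend only on the triple $(x_{j-1}, x_j, x_{j+1})$, and the $j$-th partial of $\lambda\|\xx - \cc\|_p^s$ contains a factor $|x_j - c_j|^{p-1}\,\mathrm{sign}(x_j - c_j)$ that vanishes at $x_j = c_j = 0$. Combined with strict convexity of the proximal subproblem (from $p, s > 1$), this ensures---after the subtlety discussed in the obstacle below is addressed---that the unique proximal minimizer extends the support of $\cc$ by at most one coordinate, giving the desired induction.

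Restricting $\ff$ to $\Span(e_1, \ldots, e_k)$ yields an identical length-$k$ chain with minimum $-\tfrac{s-1}{s}\, k\, b\, \delta$, so the suboptimality of any $k$-query output is at least $\tfrac{s-1}{s}(d-k)\, b\, \delta$. Setting $d = 2k$, $a = c_{s,p}\lambda$ for a constant $c_{s,p}$ depending only on $s$ and $p$, and using $\delta^{s-1} = b/a$, direct substitution into $\epsilon \asymp k\, b\, \delta$ and $\|\xx^\star\|_p^s \asymp \delta^s\, d^{s(1+1/p)}$ gives
\[
\frac{\lambda\, \|\xx^\star\|_p^s}{\epsilon} \;=\; \Theta\paren{k^{s(1+\nu)}},
\]
where the key algebraic cancellation $s(1+\nu) + 1 - s(1+1/p) = 0$ uses $\nu = 1/p - 1/s$. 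Taking $s(1+\nu)$-th roots gives the advertised rate $k = \Theta((\lambda\, \|\xx^\star\|_p^s/\epsilon)^{1/(s(1+\nu))})$.

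The main obstacle is the propagation step: for the raw chain function as stated, the $\ell_p^s$ proximal subproblem at a sparse center need not have an exactly sparse solution, because the chain forces $\partial_{x_{i+2}}\ff = -a\,|x_{i+1}|^{s-1}\,\mathrm{sign}(x_{i+1})$, which is nonzero whenever $x_{i+1} \neq 0$. The standard resolution is to replace $|z|^s$ by a thresholded potential $\phi(z) = \tfrac{1}{s}\max(|z|-C, 0)^s$, chosen so that newly activated coordinates stay below the threshold $C$ and thus do not propagate further in a single oracle call; the threshold parameter $C$ is balanced against $b$ and $\lambda$ to preserve the asymptotic rates computed above (all constants absorbed into $c_{s,p}$). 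Aside from this delicate balance, the remaining algebra is routine chain-KKT analysis plus the exponent identity $s(1+\nu) + 1 - s(1+1/p) = 0$.
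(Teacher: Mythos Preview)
Your chain-function approach is different from the paper's, which instead uses the Nemirovski max-type instance
\[
\gg_k(\xx) \;=\; \beta_k \max_{1\le i\le k}\{x_i - i\alpha_k\}
\]
on the ball $\{\|\xx\|_p\le R\}$. The advantage of the max function is that its subgradient at \emph{any} point is $\beta_k e_j$ for a single coordinate $j$, with magnitude $\beta_k$ independent of where the query is made. This is precisely what makes the zero-chain step go through cleanly for a proximal oracle.

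The gap in your plan is the propagation step, and the thresholding fix you propose does not survive the passage from local (gradient/Taylor) oracles to the proximal oracle. A zero-respecting algorithm may query the $\ell_p^s(\lambda)$ oracle at \emph{any} center $\cc$ supported on $\{1,\dots,i-1\}$, in particular one with $|c_{i-1}|$ arbitrarily large. Writing the stationarity condition for the newly activated coordinate $i$ at the proximal minimizer (with $c_i=0$ and assuming $x_{i+1}=0$), one gets
\[
\lambda s\,\|\xx-\cc\|_p^{s-p}\,|x_i|^{p-2}x_i \;=\; a\,\phi'(x_{i-1}-x_i),
\]
and since $\|\xx-\cc\|_p\ge|x_i|$ this only yields $|x_i|\le (a/(\lambda s))^{1/(s-1)}\,|x_{i-1}-x_i|$. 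The right-hand side scales with $|x_{i-1}|$, which the algorithm controls through $\cc$. In the unconstrained setting this is unbounded, so no fixed threshold $C$ can block further activation. Even if you impose $\|\xx\|_p\le R$ (which your sketch does not), you would need $C\gtrsim (a/(\lambda s))^{1/(s-1)}R$ for propagation to stop, yet $C\lesssim R/d^{1+1/p}$ for the optimum $\xx^\star$ to remain feasible; reconciling these forces $a/\lambda$ to shrink polynomially in $d$, which destroys the exponent calculation you rely on (your identity $s(1+\nu)+1-s(1+1/p)=0$ uses $a=c_{s,p}\lambda$ with $c_{s,p}$ a dimension-free constant).

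By contrast, with the max function the optimality condition of the proximal subproblem forces $\xx-\cc$ to be a negative multiple of a single $e_j$, and the spacing $\alpha_k$ (chosen exactly so that the displacement $(\beta_k/(s\lambda))^{1/(s-1)}$ is at most $\alpha_k$) guarantees the active coordinate $j$ never jumps past $i$, regardless of $\cc$. That is the mechanism your construction lacks.
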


\paragraph{Optimal acceleration of non-Euclidean ball-constrained optimization oracles.} 
We show how our approach can also be used to show lower bounds for what we call \emph{$\ell_p$ ball-constrained oracles}.

\begin{definition}[$\ell_p$ Ball-constrained Oracle]\label{def:ball} For $p \geq 2$, $r > 0$, an $\ell_p^{\infty}(r)$-proximal oracle (also referred to as an \emph{$\ell_p$ ball-constrained oracle}) for $\ff:\R^d\rightarrow \R$ is an oracle that, when queried at a centering point $\cc\in \R^d$, returns
\[
\tilde{x} \in \argmin_{x \in \R^d | \|\xx-\cc\|_p\leq r}\ \ff(\xx).
\]
\end{definition}

We extend previous $\ell_2$ ball oracle results~\cite{carmon2020acceleration, carmon2022optimal, karimireddy2018global}
to the $\ell_p$ setting (Section~\ref{sec:NoLS}), and we also show that our rates are optimal for zero-respecting algorithms (Section~\ref{sec:LB}).

\begin{theorem}[Accelerated Optimization with $\ell_p$ Ball-constrained Oracle]\label{thm:Ball-main}
   Algorithm~\ref{alg:LSFProx} given $p \geq 2$, $r > 0$, $\epsilon > 0$, $\xx_0 \in \R^d$, outputs $\xxtil$ such that $\ff(\xxtil)- \ff(\xx^{\star})\leq \epsilon$ using  
   \[
   O( (p\|\xx_0-\xx^{\star}\|_p /r)^{\frac{p}{p+1}}\log(\|\xx_0-\xx^{\star}\|_p^p/\epsilon))
   \]
    queries to an $\ell_p^{\infty}(r)$-proximal oracle. Each iteration can be implemented in $O(d)$ time plus time of the $\ell_p^{\infty}(r)$-proximal oracle.
\end{theorem}

\begin{restatable}[$\ell_p$ Ball-constrained Oracle Optimization Lower Bound]{theorem}{LowerBoundInf}\label{thm:LowerBoundInf-main}
    For every $\ell_p^{\infty}(r)$-proximal zero-respecting algorithm (Definition~\ref{def:infProxAlg}) given $p \geq 2$, $r > 0$, there is a function $\ff : \rea^d \rightarrow \rea$ such that any $\xx$ in the first $k = O((\|\xx^{\star}\|_p / r )^{\frac{p}{p+1}})$ iterations of the algorithm satisfies 
    \[
    \ff(\xx)-\ff(\xx^{\star})\geq \Omega(\|\xx^{\star}\|_p^{1/(p+1)}r^{p/(p+1)})\,.
    \]
\end{restatable}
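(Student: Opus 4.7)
The plan is to construct a chain-type hard instance in the spirit of Nemirovski's zero-chain constructions for convex optimization lower bounds, adapted to the $\ell_p$ ball-oracle setting. Specifically, I would build a convex function $\ff : \R^N \to \R$, parameterized by a dimension $N$ and scale parameters, whose gradient couples only adjacent coordinates (akin to a tridiagonal quadratic form, but with $\ell_p$-type nonlinearity such as $\tfrac{1}{p+1}\sum_{i} \abs{x_{i+1} - x_i}^{p+1}$ plus a single linear ``driver'' term), so that $\ff$ possesses the \emph{robust zero-chain property} along the canonical basis: whenever $x_j = x_{j+1} = \cdots = x_N = 0$, we have $\partial_{x_i}\ff(\xx) = 0$ for all $i \geq j+1$. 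The unconstrained minimizer $\xx^\star$ of such an $\ff$ is computable in closed form by chaining the stationarity conditions, and has all $N$ coordinates nonzero.

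With such a hard function in hand, the key step is a \emph{support-extension lemma}: if the algorithm's queries $\cc^{(0)}, \ldots, \cc^{(t-1)}$ all lie in $\Span(\ee_1, \ldots, \ee_t)$, then the $(r,p)$-ball oracle response $\arg\min_{\|\xx - \cc^{(t-1)}\|_p \leq r} \ff(\xx)$ lies in $\Span(\ee_1, \ldots, \ee_{t+1})$. I would establish this from the KKT conditions: at the oracle optimum there is a multiplier $\mu \geq 0$ satisfying, coordinate-wise, $\partial_{x_i}\ff(\xx) = -\mu\, p\, \abs{x_i - c_i}^{p-1}\text{sign}(x_i - c_i)$. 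Since the right-hand side vanishes whenever $x_i = c_i$, and the zero-chain property forces $\partial_{x_i}\ff$ to vanish once the tail coordinates are zero, an induction starting from $i = N$ downward forces $x_i = 0$ for all $i \geq t+2$. The chain structure of $\ff$ is precisely what allows this induction to close.

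After $k$ queries, any point produced by the algorithm lies in $\Span(\ee_1, \ldots, \ee_{k+1})$, and its suboptimality is therefore bounded below by $\min_{\supp{\yy} \subseteq [k+1]} \ff(\yy) - \ff(\xx^\star)$. By the chain structure, this restricted minimization is of the same form as the original with $N$ replaced by $k+1$, so both minima are captured by the same KKT recurrence. A short computation of this recurrence, followed by tuning the construction's parameters so that $N = \Theta(k+1)$, the ``natural'' oracle step matches the given radius $r$, and the driver term is calibrated to achieve the target norm, will yield $k = \Theta((\|\xx^\star\|_p/r)^{p/(p+1)})$ paired with a gap of $\Omega(\|\xx^\star\|_p^{1/(p+1)}r^{p/(p+1)})$, matching the theorem.

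The main obstacle I anticipate is establishing the robust zero-chain property under the $\ell_p$-ball oracle. Unlike the $\ell_2$ case, where the constraint normal $\xx - \cc$ is linear and the KKT equation reduces to a linear tridiagonal system that visibly preserves zero coordinates, the $\ell_p$ constraint normal involves the coordinate-wise nonlinearity $\abs{x_i - c_i}^{p-1}\text{sign}(x_i - c_i)$. Care is needed to rule out ``sign-flip'' solutions in which this nonlinearity activates coordinates beyond the chain's current reach; I expect the argument will proceed either by a coordinate-by-coordinate induction that exploits $\abs{0}^{p-1}\text{sign}(0) = 0$ together with strict convexity of $\ff$ in each coordinate, or by a primal-dual uniqueness argument leveraging the separability of the $\ell_p$-ball constraint.
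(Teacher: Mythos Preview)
Your proposal has a genuine gap in the support-extension lemma, and the chain-type function you describe will not work as a ball-oracle zero-chain.

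Consider your function $\ff(\xx)=\tfrac{1}{p+1}\sum_i|x_{i+1}-x_i|^{p+1}+\text{(driver)}$ and suppose $\cc\in\Span(\ee_1,\dots,\ee_t)$. You want the oracle output $\xx=\calO_{\ff,r,p,\infty}(\cc)$ to satisfy $x_i=0$ for all $i\ge t+2$. Your downward induction argues that the KKT condition $\partial_{x_i}\ff(\xx)=-\mu\,p\,|x_i-c_i|^{p-1}\mathrm{sign}(x_i-c_i)$ is \emph{consistent} with $x_i=0$ for $i\ge t+2$ once the tail is zero. But consistency is not enough: you must check KKT at \emph{every} coordinate, and at $i=t+2$ it fails. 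If $x_{t+2}=\cdots=x_N=0$ but $x_{t+1}\neq 0$ (as it must be, since that is the newly activated coordinate), then
\[
\partial_{x_{t+2}}\ff(\xx)\;=\;|x_{t+2}-x_{t+1}|^{p}\,\mathrm{sign}(x_{t+2}-x_{t+1})\;=\;-|x_{t+1}|^{p}\,\mathrm{sign}(x_{t+1})\;\neq\;0,
\]
while the right-hand side is $-\mu\,p\,|0-0|^{p-1}\mathrm{sign}(0)=0$. So the candidate with support $\{1,\dots,t+1\}$ does \emph{not} satisfy KKT, and the true oracle output will have $x_{t+2}\neq 0$, then $x_{t+3}\neq 0$, and so on: the ball oracle propagates all the way down the chain in a single call. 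Your own ``robust zero-chain'' statement already hints at this: you only claim $\partial_{x_i}\ff=0$ for $i\ge j+1$ when $x_j=\cdots=x_N=0$, which says nothing about $\partial_{x_j}\ff$. Chain functions of this type are zero-chains for \emph{gradient} oracles, not for ball (proximal) oracles, which are strictly more powerful.

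The paper avoids this by using the nonsmooth Nemirovski instance $\gg_k(\xx)=\max_{1\le i\le k}\{x_i-\alpha i\}$ with $\alpha=4r$. The point is that at any query center $\cc\in S_{i-1}$, provided $\alpha$ dominates $r$, the maximum is uniquely achieved at a single index, so the subgradient of $\gg_k$ at the oracle output is a \emph{single} basis vector $\ee_j$. The KKT condition $\ee_j=-\mu\,p\,|\xx-\cc|^{p-2}(\xx-\cc)$ then forces $x_l=c_l$ for every $l\neq j$, so exactly one new coordinate is activated. The suboptimality bound then follows from the elementary estimates $\gg_k(\xx)\ge -(k'+1)\alpha$ for $\xx\in S_{k'}$ and $\gg_k(\xx^\star)\le -R/k^{1/p}$, yielding the claimed $k=O((R/r)^{p/(p+1)})$ and gap $\Omega(R^{1/(p+1)}r^{p/(p+1)})$. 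The one-sparsity of the subgradient, not any tridiagonal structure, is what makes the ball-oracle zero-chain go through.
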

As previously mentioned, algorithms based on $\ell_2$ ball-constrained oracles have had a wide range of applications. For example, they have arisen in solving natural problems in learning theory
and machine learning, including logistic regression~\cite{karimireddy2018global,carmon2020acceleration}, minimizing the maximum of a set of
convex losses~\cite{carmon2021thinking, carmon2024whole}, and parallel stochastic optimization~\cite{bubeck2019complexity, carmon2023resqueing}. Consequently, these results could
lead to opportunities for improving these and related problems in different geometries for, e.g., high-order and quasi-self-concordant optimization settings. We further discuss in Section~\ref{sec:App} how our methods can apply to such cases by showing that the oracles for these settings implement a more general oracle class.

\paragraph{Extensions to high-order optimization.} 
As another application, we provide the following result, which extends our techniques to \emph{highly smooth} problems with respect to the $p$-norm, i.e., problems whose $q^{th}$-order derivatives are $L_q$-Lipschitz continuous (for $q \geq 1$), in which case each step is based on minimizing a $\norm{\cdot}_p^{q+1}$-regularized $q^{th}$-order Taylor expansion.\footnote{We use the notation $O_{p,s}(\cdot)$ to hide multiplicative running time factors which depend only on $p$ and $s$.}
\begin{restatable}[High-order Optimization]{theorem}{thmHO}\label{thm:HO}
    For $s > p \geq 2$, $q = s-1$, let $\ff$ be $q^{th}$-order $L_{q}$ smooth with respect to $\norm{\cdot}_p$, $\xx_0 \in \rea^d$, and $\epsilon > 0$. Algorithm~\ref{alg:LSFProx}, implementing an $O(1)$-$\GMS$ oracle by a $(p,s)$-Taylor oracle, finds $\xxtil$ such that $\ff(\xxtil)-\ff(\xx^{\star})\leq \epsilon$ in 
    \[
    O_{p,s}\left(\norm{x_0 - x^*}_p^{\frac{1}{p(1+\nu)}} (L_{q}/\epsilon)^{\frac{1}{s(1+\nu)}}\right)
    \]iterations. The cost of each iteration is at most $O(d)$ plus the cost of the $(p,s)$-Taylor oracle.
\end{restatable}
We then show how this behaves, in a sense, as a certain \emph{approximation} to the $\ell_p^s(\lambda)$-proximal oracle (for an appropriate choice of $\lambda$, which depends on the smoothness parameter $L_{q}$, and where $s = q+1$), and the rates we obtain naturally generalize those found in previous results~\cite{gasnikov2019near, monteiro2013accelerated}. Such Euclidean-based high-order algorithms have been used, as previously discussed, to improve convergence for fundamental machine learning problems~\cite{agarwal2017finding,bullins2020highly,nesterov2006cubic}, and consequently rates for the non-Euclidean variants may have additional applications.

\paragraph{Simultaneous Independent Related Work.} Independently, \textcite{contreras2024non} also investigate non-Euclidean high-order and proximal point methods for convex optimization and provide similar algorithms and convergence rates. Specifically, they give algorithms for $q^{th}$-order $(L,\nu)$-Hölder continuous functions for all $p,q\geq 1$, whereas our work focuses on the case where $q +1 \geq p \geq 2$ for proximal oracles, ball-constrained optimization oracles, and highly smooth functions. \textcite{contreras2024non} further establish lower bounds for minimizing  $q$-th order $(L, \nu)$-Hölder continuous functions with respect to $||\cdot||_p$, for algorithms that can query a local oracle, and as a result, their lower bounds depend on $L$. In contrast, our lower bounds are for (zero-respecting) $\ell_p^s$ proximal algorithms, and they place no smoothness restrictions on the function class. The initial draft of this paper was produced independently without having seen \cite{contreras2024non}. Attempts were made to preserve independence during the revision process.

\paragraph{Future work and open problems.}
This paper provides improved rates for multiple convex optimization problems by using non-Euclidean optimization. However, these new rates come with costs. Whereas $\ell_2$-regression can be solved by matrix multiplication and linear system solving, smoothed $\ell_p$-regression is a more general and potentially complex optimization problem. Consequently, implementing steps of our algorithm efficiently and using this to obtain end-to-end runtime improvements for structured optimization problems is an interesting direction for future research.

Additionally, though we settle the complexity of convex optimization with the $\ell_p^s(\lambda)$-proximal oracle, this does not necessarily imply that our reduction from $\ell_s$ to smoothed $\ell_p$-regression is optimal. Consider, for example, solving $\ell_6$-regression using $\ell_2$-regression and compare it with solving $\ell_6$-regression via $\ell_4$-regression. Our results state that the former would use $\approx n^{1/4}$ iterations of solving least squares regression, while the iteration complexity of the latter would improve to $\approx n^{1/13}$ (improving upon $\approx n^{1/10}$ iteration complexity due to \cite{adil2020faster}), each of which would involve solving an $\ell_4$-regression problem. Curiously, if we further solve each $\ell_4$-regression problem using $\ell_2$-regression, we would require $n^{1/5}$ such problems at each step, leading to a total of $n^{\frac{1}{13} + \frac{1}{5}}\geq n^{\frac{1}{4}}$. Improving our reductions to smoothed $\ell_p$-regression, finding a similar type of reduction that doesn't occur this type of loss in repeated reduction, or providing more compelling evidence of optimality are additional interesting directions future research.

Furthermore, in the future, high-order optimization methods (e.g.~\cite{gasnikov2019near}) could potentially benefit from these tools, as such methods often depend on solving $\ell_p$-norm regularized subproblems at each iteration. While there has been a line of work showing how subproblems based on second-~\cite{nesterov2006cubic} and third-order~\cite{nesterov2021implementable} Taylor models admit more computationally efficient approximate solvers, less is known about more general settings. This work could potentially help in such settings.

Finally, obtaining similar improvements as the ones we have shown in this work for linear programming---or convex optimization with interior point methods more broadly---are additional exciting directions for future research. 
We hope our results help facilitate in these directions and, with further research, may yield new non-Euclidean perspectives on basic problems and algorithms in machine learning, including those related to parallel, stochastic, and higher-order optimization, and ultimately lead to further efficiency gains in theory and in practice.

\paragraph{Paper overview.}

In Section~\ref{sec:Prox} as a warm-up we give our main $\ell_p^s(\lambda)$-proximal point algorithm which includes a line-search in each iteration. In Section~\ref{sec:NoLS} we show how we may remove the need for this line-search and give an algorithm with a more general oracle. In Section~\ref{sec:App} we use these algorithms to give rates for solving $\ell_s$-regression and generalized high-order smooth problems. In Section~\ref{sec:LB}, we give lower bounds that match the rates of our $\ell_p^s(\lambda)$-proximal point algorithms analyzed in Section~\ref{sec:Prox}.

\section{Warm-up: Conceptual Proximal Point with Line Search}\label{sec:Prox}

In this section, we analyze a simpler version of our generalized proximal point algorithm. Our aim is first to provide clarity for the convergence rates, while in the following section we address the issue of overall computational cost. Specifically, in this section we present an algorithm where each iteration involves finding points that satisfy certain implicit conditions, similar to previous works based on Monteiro-Svaiter acceleration~\cite{monteiro2013accelerated}. These implicit problems can be solved using an additional line-search procedure as in previous works (e.g.,~\cite{monteiro2013accelerated,gasnikov2019near,carmon2020acceleration}), followed by invoking an $\ell_p^s(\lambda)$-proximal oracle. We later show, in Section~\ref{sec:NoLS}, how to modify the algorithm to obtain a line search-free method, and in doing so we introduce a more general oracle model. We further split our algorithm into two separate cases: $s < \infty$ and $s=\infty$ (the latter coinciding with an $\ell_p$ ball-constrained oracle).

Throughout the paper, we let $\norm{\cdot}_p$ (for $p \in [1,\infty)$) denote the standard $\ell_p$ norm, and we use $\omega_p$ to denote, for an initial point $x_0\in \rea^d$, the Bregman divergence of the $\ell_p$-norm function $\norm{x - x_0}_p^p$, i.e., for any $x,y\in \rea^d$, 
\[
\omega_p(\xx,\yy) = \|\xx-\xx_0\|_p^p - \|\yy-\xx_0\|_p^p  - \langle \nabla_{\yy}\|\yy-\xx_0\|_p^p, \xx-\yy\rangle. 
\]
We also let $p^* := \frac{p}{p-1}$ throughout the paper. 

We will prove the guarantees of Algorithm~\ref{alg:ProxQ}, which finds $x$ such that $f(x)-f(x^*)\leq \epsilon$ for a convex function $f$ via an $\ell_p^s(\lambda)$-proximal oracle given a line-search to estimate $\lambda_t$'s. Our algorithm, in every iteration maintains two points $x_t$ and $z_t$, takes a carefully chosen convex combination of the points to get $y_t$, applies the $\ell_p^s(\lambda)$-proximal oracle at $y_t$ to compute $x_{t+1}$, and then uses the gradient of $f$ at $x_{t+1}$ to compute $z_{t+1}$. The main guarantee of the algorithm is given below in Theorem~\ref{thm:ConceptualProx}. 

\begin{algorithm}
\caption{Conceptual Proximal Point Algorithm}
\label{alg:ProxQ}
  \textbf{Initialize:} $a_0 = 1$, $A_0 = 0$,
  $\zz_0 = \yy_0 =\xx_0$, $T \geq 1$\\
\For{$t = 0,\cdots, T-1$}{
Find $\lambda_t>0$, $\xx_{t+1}\in \rea^d$ for which the following hold:
\begin{itemize}
  \item $5^{p-1}\lambda_t a_{t+1}^p = A_{t+1}^{p-1}$, $A_{t+1} = A_t + a_{t+1}$, and $\yy_t = \frac{A_t}{A_{t+1}}\xx_t + \frac{a_{t+1}}{A_{t+1}}\zz_t$
 \item \small{ $\lambda_t = \begin{cases}
    \lambda\|\xx_{t+1}-\yy_t\|_p^{s-p} & \text{ if $s<\infty$}\\
      \|\nabla\ff(\xx_{t+1})\|_{\frac{p}{p-1}}/r^{p-1} & \text{ if $s = \infty$}
  \end{cases}$ and $\xx_{t+1} \gets \begin{cases} \arg\min_{\yy} \ff(\yy) + \frac{\lambda}{p}\|\yy-\yy_t\|_p^s & \text{ if $s<\infty$}\\
 \arg\min_{\norm{\yy - \yy_t}_p \leq r} \ff(\yy) & \text{ if $s = \infty$}\end{cases}$}
 \end{itemize}
 $\zz_{t+1} \gets \arg\min_{\zz\in \rea^d} \sum_{i\in [t+1]}a_i \nabla \ff(\xx_i)^{\top}(\zz-\yy_i) + \|\zz-\yy_0\|_p^p$
 }
 \Return $\xx_T$
\end{algorithm}

\begin{restatable}{theorem}{ThmConceptProx}\label{thm:ConceptualProx}
    Let $s<\infty$. Given $\ff$, $\xx_0 \in \R^d$, and $\epsilon > 0$, there exist, for all $t\geq 0$, $\lambda_t, x_{t+1}$ satisfying the conditions of Algorithm~\ref{alg:ProxQ}, and it outputs $\xx_T$ such that $f(\xx_T) - \ff(\xx^{\star} )\leq \epsilon$, in
    \[
    O\left(\frac{s^{s(1+\nu)}}{p^{s\nu}} \cdot \frac{\lambda\norm{x_0-x^*}^s_p}{\epsilon}\right)^{\frac{1}{s(1+\nu)}}
    \enspace\text{iterations.}
    \]
\end{restatable}

When $s = \infty$, i.e., the algorithm is equipped with an $\ell_p^{\infty}(r)$-proximal oracle, we prove the following.
\begin{theorem}\label{thm:BallOracle}
    Let $s=\infty$. Given $\ff,\xx_0 \in \R^d$ and $\epsilon > 0$, there exist, for all $t\geq 0$, $\lambda_t, x_{t+1}$ satisfying the conditions of Algorithm~\ref{alg:ProxQ}, and it outputs $\xx_T$ such that $
    f(\xx_T) - \ff(\xx^{\star} ) \leq \epsilon,$ in 
    \[
    O\left( \left(\frac{p \|\xx^{\star}-\xx_0\|_p}{r}\right)^{\frac{p}{p+1}}\log \left(\frac{\|\xx^{\star}-\xx_0\|_p}{\epsilon}\right)\right)
    \enspace\text{iterations.}
    \]
\end{theorem}

The crux of the proofs of Theorem~\ref{thm:ConceptualProx} and Theorem~\ref{thm:BallOracle} is a standard potential argument~\cite{diakonikolas2019approximate, nesterov2018lectures}. Namely, we rely on the following lemma, which shows how the potential $A_t(\ff(\xx_t) - \ff(\xx^{\star})) + \omega_p(\xx^{\star},\zz_{t})$ decreases in every iteration of the algorithm. The proof can be found in Appendix~\ref{app:concept}.
\begin{lemma}\label{lem:Potential}
   For all $t \geq 0$, the iterates of Algorithm~\ref{alg:ProxQ} satisfy
\begin{align*}
A_{t+1}\left(\ff(\xx_{t+1})-\ff(\xx^{\star})\right)  + \omega_p&(\xx^{\star},\zz_{t+1})\\& \leq A_{t}\left(\ff(\xx_{t})-\ff(\xx^{\star})\right) + \omega_p(\xx^{\star},\zz_{t}) - A_{t+1}\lambda_t \|\xx_{t+1}- \yy_t\|_p^p.
\end{align*}
\end{lemma}

The remaining proof of Theorem~\ref{thm:ConceptualProx} and Theorem~\ref{thm:BallOracle} follow slightly different pathways and can be found in Appendix~\ref{app:concept}.

\section{Line-Search Free Method for Non-Euclidean Acceleration}\label{sec:NoLS}

In this section, we describe our main algorithm which eliminates the line-search required when implementing Algorithm~\ref{alg:ProxQ} and works with a more general oracle. We first define our oracle.
\newcommand{\Oracle}{\mathcal{O}_{\sigma,p}}
\newcommand{\sigmaGMS}{\sigma$-$\textrm{\emph{GMS}}_p} 
\begin{definition}
     An oracle $\Oracle : \mathbb{R}^d \rightarrow \R^d \times \R_+$ is a \emph{$\sigma$-Generalized Monteiro-Svaiter} with respect to $p$ ($\sigmaGMS$) oracle for a function $f : \R^d \rightarrow \R$ and $\sigma \in [0,1)$ if, for every $\yy \in \R^d$, $(\xx, \gamma) = \Oracle(\yy)$
    satisfies 
    \begin{equation*}
         \langle\nabla \ff(\xx),\xx -\yy  \rangle
        \leq -(1-\sigma)\gamma\norm{\xx -\yy}_p^p, \quad \text{and}\quad \norm{\nabla \ff(\xx)}_{p^*} \leq (1+\sigma) \gamma \norm{\xx-\yy}_p^{p-1}.
    \end{equation*}
    Additionally, we say $\Oracle$ satisfies an \emph{$(s,\mu)$ movement bound} if 
    \begin{equation*}
        \norm{\xx -  \yy}_p  \geq \begin{cases}
            (\gamma/\mu^s)^{1/(s-p)} & \text{ if $s<\infty$}\\
            1/\mu & \text{ if $s = \infty$}.
        \end{cases}
    \end{equation*}
\end{definition}
In Appendix~\ref{app:LSF}, we show that $\sigmaGMS$ oracles generalize $\ell_p^s$ oracles as well as the Monteiro-Svaiter (MS) oracles from~\cite{monteiro2013accelerated,carmon2022optimal}. In this section we provide the following results.
\begin{theorem}
\label{lem:finite-s-lemma}
       Algorithm~\ref{alg:LSFProx}, given $s > p \geq 2$, $\epsilon > 0$, $\xx_0 \in \R^d$, $R = O(\norm{x_0-x^*}_p)$, and a $\sigmaGMS$ oracle satisfying an $(s,\mu)$ movement bound, returns $\xxtil \in \R^d$ such that $\ff(\xxtil)- \ff(\xx^{\star})\leq \epsilon$ after $$ O\left((\mu^s \|\xx_0-\xx^{\star}\|_p^s / \epsilon)^{\frac{1}{s(1+\nu)}}\right)$$ iterations. Each iteration can be implemented in $O(d)$ time plus time of the $\sigmaGMS$ oracle.
\end{theorem}

\begin{theorem}\label{lem:Inf-s-lemma}
    Algorithm~\ref{alg:LSFProx}, given $p \geq 2$, $\epsilon > 0$, $x_0 \in \R^d$, $R = O(\norm{x_0-x^*}_p)$, and a $\sigmaGMS$ oracle satisfying an $(\infty,\mu)$ movement bound, returns $\xx_T$ such that
$ \ff(\xx_T) - \ff(\xx^{\star} ) \leq \epsilon$ after 
    $$O\left(\left(\mu p \|\xx_0-\xx^{\star}\|_p\right)^{\frac{p}{p+1}} \log\frac{\|\xx_0-\xx^{\star}\|_p}{\epsilon}\right)$$
    iterations.     Each iteration can be implemented in $O(d)$ time plus time of the $\sigmaGMS$ oracle.
\end{theorem}

\newcommand{\lbar}{\bar{\lambda}_{t+1}}
\begin{algorithm}
\caption{General Proximal Point Algorithm (Line-Search Free)}
\label{alg:LSFProx}
  \textbf{Initialize:} $s > p \geq 2$, $a_0 = 1$, $A_0 = 0$, $\bar{\lambda}_1 = 1$,
  $\zz_0 = \yy_0 =\xx_0 \in \R^d$, $T \geq 1$, $T_{\mathrm{ref}} = 1$, $R > 0$, $\mu > 0$, $\sigma \in [0,1)$\\
\For{$t = 0,\cdots, T-1$}{
\If{$t\geq 1$}{
\lIf{$A_t \geq 2A_{T_{\mathrm{ref}}}$}{
$T_{\mathrm{ref}} = t$
}
$\lbar =\begin{cases}
    A_{T_{\mathrm{ref}}}^{-\frac{(s-p)(p+1)}{ps-p+s}} R^{\frac{p(s-p)}{ps-p+s}} \mu^{\frac{sp^2}{s-p+ps}} & \text{ if $s<\infty$}\\[0.2cm]
      \frac{\mu^{\frac{p^2}{p+1}}R^{\frac{p}{p+1}}}{A_{T_{\mathrm{ref}}} } & \text{ if $s = \infty$}
  \end{cases}$
}
  $\lbar (\frac{3+3\sigma}{1-\sigma} a_{t+1})^p = A_{t+1}'^{p-1}$, $A_{t+1}' = A_t + a_{t+1}$
 $\yy_t \leftarrow \frac{A_t}{A_{t+1}}\xx_t + \frac{a_{t+1}}{A_{t+1}}\zz_t$\label{algline:Iterate}\\
 $(\xx_{t+1}', \lambda_{t+1}) \gets \sigmaGMS(\yy_t)$ \\
 $\beta_{t+1} \gets \min \{ 1, \frac{ \lbar }{\lambda_{t+1}} \}$\\
 $\zz_{t+1} \gets \arg\min_{\zz\in \rea^d} \sum_{i=1}^{t+1} a_i \beta_i \nabla \ff(\xx_i')^{\top}(\zz-\yy_i) + \omega_p(\zz,\yy_0)$\\
 $\xx_{t+1} \gets \frac{ (1- \beta_{t+1}) A_t x_t + \beta_{t+1} A_{t+1}' x_{t+1}'}{A_t + \beta_{t+1} a_{t+1}}$\\
 $A_{t+1} \gets A_t + \beta_{t+1} a_{t+1}$\\
 }
 \Return $\xx_{T}$
\end{algorithm}
The proofs of the above results follow from the following potential analysis, which differs from those in Section~\ref{sec:Prox} in that it now accounts for the progress differently depending on whether $\lbar < \lambda_{t+1}$ or $\lbar\geq \lambda_{t+1}$.
\begin{lemma}\label{lem:Potential2}
   The iterates of \Cref{alg:LSFProx} satisfy
\begin{align*}A_{t+1}\left(\ff(\xx_{t+1})-\ff(\xx^{\star})\right) + \omega_p(\xx^{\star},\zz_{t+1}) \leq & A_{t}\left(\ff(\xx_{t})-\ff(\xx^{\star})\right) + \omega_p(\xx^{\star},\zz_{t}) \\
&- \mathbbm{1}_{\{\lbar<\lambda_{t+1}\}}\frac {A_{t+1}' (1-\sigma) \lbar}{3} \|\xx'_{t+1}-\yy_t\|_p^p,
\end{align*}
and, $A_{t+1}^{1/p} \geq A_{t}^{1/p} + \mathbbm{1}_{\{\lbar\geq \lambda_{t+1}\}}\frac{1}{\frac{6+6\sigma}{1-\sigma} \lbar^{1/p}}$.
\end{lemma}
We defer the proofs of both the potential analysis and our main results in this section to Appendix~\ref{app:LSF}.

\section{Applications}\label{sec:App}

In this section, we cover different applications of our acceleration framework from Section~\ref{sec:NoLS}. In particular, we apply our framework to $\ell_s$-norm regression, where $s\geq p\geq 2$, higher-order smooth optimization, and implement an $\ell_p$-ball oracle.

\paragraph{$\ell_s$-Norm Regression.}\label{sec:regression}

We give an algorithm for $\ell_s$-regression, where $s\geq p\geq 2$ using an oracle that solves problems of the form,
\begin{equation}\label{eq:pNormOracle}
\min_{\xx\in \mathbb{R}^d} \gg^{\top}\xx + \|\RR\xx\|_2^2 + \|\WW\xx\|_p^p,
\end{equation}
for any vector $\gg\in \mathbb{R}^d$ and matrices $\RR\in \rea^{n\times d}$ and $\WW \in \rea^{n\times d}$, where $n \geq d$. 
We will prove Theorem~\ref{thm:regression-main} which we restate here.
\SNormReg*

We present the proof in Appendix~\ref{app:app}. The proof follows from an application of Algorithm~\ref{alg:LSFProx} for the $\ell_p^s(\lambda)$-proximal oracle (Theorem~\ref{thm:ConceptualProx-main}), which reduces solving the $\ell_s$-regression problem to $\approx n^{\frac{\nu}{1+\nu}}$ calls to an $\ell_p^s$-proximal oracle. We further prove that every such call to the $\ell_p^s$-proximal oracle can be solved using $\widetilde{O}(1)$ smoothed $\ell_p$-norm problems, i.e., Eq.~\eqref{eq:pNormOracle} (Lemma~\ref{lem:proxSubProb}).

\paragraph{High-Order Smooth Optimization in General Norms.}\label{sec:ho-smooth}

As an extension of our results, we consider a natural relaxation of the generalized proximal oracle based on regularized high-order Taylor approximations~\cite{bullins2020highly, gasnikov2019near, monteiro2013accelerated, nesterov2006cubic, nesterov2008accelerating}. 
Namely, letting
\[
\mathcal{T}_f^{q}(\yy;\xx) \defeq \ff(\xx) + \sum\limits_{i=1}^{q} \frac{1}{i!}\nabla^{i} \ff(\xx)[\yy-\xx]^{i-1}
\]
denote, for $q \geq 1$, the $q^{th}$-order Taylor expansion of $\ff$ at $\xx$, we define the following notion of high-order smoothness, which naturally generalizes those in previous works~\cite{nesterov2008accelerating,gasnikov2019near}.
\begin{definition}\label{def:HO-smooth}
    We say $\ff$ is \emph{$q^{th}$-order $L_{q}$ smooth with respect to $\norm{\cdot}_p$} if, for all $\xx, \yy \in \rea^d$,
\begin{equation}
    \norm{\nabla \ff(\yy) - \nabla_{\yy} \mathcal{T}_f^{q}(\yy;\xx)}_{p^*} \leq \frac{L_{q}}{q!}\norm{\yy - \xx}_p^{q}.
\end{equation}
\end{definition}
We now consider the problem of minimizing a convex function $\ff$ given what we call a \emph{$(p,s)$-Taylor oracle}, which, for a centering point $\cc$, returns the solution of the regularized problem 
\[
\min_{x\in \rea^d} \Taylor(\xx;\cc) + \frac{L_{s-1}}{(s-1)!} \|\xx-\cc\|_p^{s}.
\]
In contrast to the $\ell_p^s(\lambda)$-proximal oracle, here we have replaced $\ff(\xx)$ with its $(s-1)^{th}$-order counterpart $\Taylor(\xx; \cc)$, and the smoothness parameter now fulfills the role of $\lambda$ (which was previously a parameter of the oracle).
As a natural consequence of our results from Section~\ref{sec:NoLS}, combined with the observation that that our Taylor oracle implements an $O(1)$-$\GMS$ oracle, we provide the following convergence guarantees for Taylor oracle-based algorithms.

\thmHO*

We would note Theorem~\ref{thm:HO} obtains the known optimal rates in the case of $p = 2$ and $s > 2$ \cite{arjevani2019oracle, gasnikov2019near}, and thus our guarantees allow us to generalize these previous results to all $s > p \geq 2$.

\paragraph{Implementing the $\ell_p$ Ball Oracle Under Hessian Stability.}
As in the case of the Euclidean ball oracle~\cite{carmon2020acceleration, karimireddy2018global}, we rely on the following notion of \emph{Hessian stability} for implementing our more general $\ell_p$ ball oracles. 
\begin{definition}[\cite{carmon2020acceleration}, Definition 7]
    A twice-differentiable function $f : \R^d \rightarrow \R$ is \emph{$(r,\xi)$-Hessian stable with respect to $\norm{\cdot}$}, for $r, \xi > 0$, if for all $x, y \in \R^d$ such that $\norm{x-y} \leq r$, we have $\xi^{-1} \nabla^2 f(y) \leq \nabla^2 f(x) \leq \xi \nabla^2 f(y)$.
\end{definition}

As its name suggests, this type of stability provides control over \emph{local} changes in the Hessian, and objectives such logistic regression and the softmax loss exhibit favorable Hessian stability with respect to $\norm{\cdot}_2$ and $\norm{\cdot}_\infty$, respectively~\cite{bach2010self,carmon2020acceleration,karimireddy2018global}. We may further observe that such control entails beneficial (local) conditioning (i.e., strong convexity and smoothness) of $f$ \emph{with respect to $\norm{\cdot}_{\nabla^2 f(y)}$} (where we let $\norm{u}_{\nabla^2 f(y)}^2 = u^\top\nabla^2 f(y) u$), and it can be shown that this condition holds when $f$ is \emph{$\alpha$-quasi-self-concordant with respect to $\norm{\cdot}$}, meaning that, for all $x, u, v \in \R^d$, $|\nabla^3 f(x)[v,u,u]| \leq \alpha\norm{v}\norm{u}_{\nabla^2 f(x)}^2$. It is natural, then, that we may run an appropriate descent method (in terms of $\norm{\cdot}_{\nabla^2 f(y)}$, e.g., \cite{carmon2020acceleration}, Theorem 9), constrained to the region of stability.

We additionally rely on a relaxation of the $\ell_p$ ball oracle (Definition~\ref{def:ball}), which allows for a certain amount of approximation in a manner similar to that in \cite{carmon2020acceleration}.

\begin{definition}\label{def:approxBall} Given $p \geq 2$, $\delta, r > 0$, an $\ell_p^{\infty}(\delta,r)$-proximal oracle (also referred to as an \emph{approximate ball-constrained oracle}) for a function $\ff:\R^d\rightarrow \R$ is an oracle that, when queried at a centering point $\cc\in \R^d$, returns $\tilde{x} \in \R^d$ such that $\norm{\tilde{x}-x_{c,r}^*}_p \leq \delta$, where $x_{c,r}^*$ is solution of the problem 
\[
\min_{x\in \rea^d: \|\xx-\cc\|_p\leq r} \ff(\xx).
\]
\end{definition}
The following theorem establishes that, under an appropriate choice of $\delta$, such an oracle can implement a $\sigma$-$\textrm{GMS}_p$ oracle, and its proof can be found in Appendix~\ref{sec:proofApproxImpGMS}.

\begin{theorem}\label{thm:approxImpGMS}
    Let $f$ be $L$-smooth w.r.t. $\norm{\cdot}_p$, and let $\tilde{x} \in \R^d$ be given by an $\ell_p^{\infty}(\delta,r)$-proximal oracle for $f$ queried at $c \in \R^d$, for $p \geq 2$, $r, \tilde{\epsilon}, \sigma > 0$, $\min\{\frac{r\sigma}{4},\frac{\tilde{\epsilon}\sigma}{4rL}\} \geq \delta > 0$. Then, if $\norm{\nabla f(\tilde{x})} \geq \tilde{\epsilon}$, an oracle that returns $(\tilde{x}, \tilde{\gamma})$ for $\tilde{\gamma} = \frac{\norm{\nabla f(\tilde{x})}}{\norm{\tilde{x} - y}_p^{p-1}}$ is a $\sigma$-$\textrm{GMS}_p$ oracle, and furthermore it satisfies an $(\infty, (r-\delta)^{-1})$ movement bound. 
\end{theorem}

\begin{algorithm}[h]
\caption{Constrained Accelerated Newton Descent}
\label{alg:genGD}
  \textbf{Initialize:}
  $a_0=1$, $A_0=0$, $\xx_0 = \yy_0 = \zz_0 = \xx_{\textrm{init}} \in \R^d$, $r > 0$, $T \geq 1$\\
\For{$t = 0,\cdots, T-1$}{
$5a_{t+1}^2 = A_{t+1}$, $A_{t+1} = A_t + a_{t+1}$\\\
$y_t \leftarrow \frac{A_t}{A_{t+1}}x_t + \frac{a_{t+1}}{A_{t+1}}z_t$\\
$\xx_{t+1} \leftarrow {\arg\min}_{x : \norm{x-c} \leq r} \{\inner{\nabla f(y_t)}{x-y_t} + \xi\norm{x-y_t}_{\nabla^2 f(\cc)}^2\}$\\
$z_{t+1} \leftarrow {\arg\min}_{z} \sum_{i \in [t+1]} a_i\nabla f(x_i)^\top(z-y_i) + \norm{z-y_0}_{\nabla^2 f(c)}^2$
 }
\Return $x_T$
\end{algorithm}

\begin{algorithm}[h]
\caption{Constrained Accelerated Newton Descent + Restarting}
\label{alg:restart}
  \textbf{Initialize:}
  $\delta > 0$, $\xx_{0} \in \R^d$, $r > 0$, $K = O(\log(\frac{\xi dLr}{\mu\delta}))$\\
\For{$k = 0,\cdots, K-1$}{
$x_{k+1} = \textrm{Constrained Accelerated Newton Descent}(\xx_{\textrm{init}} = \xx_k, r, T = O(\xi)$)
 }
\Return $x_K$
\end{algorithm}

It remains to show how we may implement an $\ell_p^{\infty}(\delta,r)$-proximal oracle for Hessian stable functions, which the following theorem provides.

\begin{theorem}\label{thm:implementApproxOracle}
Let $f$ be $L$-smooth and $\mu$-strongly convex w.r.t. $\norm{\cdot}_2$, and $(r,\xi)$-Hessian stable w.r.t. $\norm{\cdot}_p$. Then, there is an algorithm (Algorithm~\ref{alg:restart}) that implements an $\ell_p^{\infty}(\delta,r)$-proximal oracle, using $O(\xi\log(\frac{\xi dLr}{\mu\delta}))$ queries to a gradient oracle and one query to a Hessian oracle of $f$.
\end{theorem}

The proof (found in Appendix~\ref{sec:proofImpApproxOracle}) closely follows that of Theorem~\ref{thm:ConceptualProx}, along with a standard restarting argument (e.g.,~\cite{roulet2017sharpness}) to attain linear convergence rates when paired with the strong convexity that comes from Hessian stability.

\section{Lower Bounds}\label{sec:LB}
In this section, we complement our upper bound results with matching lower bounds, for both $s < \infty$ and $s = \infty$ cases, for algorithms that are \emph{zero-respecting}~\cite{carmon2020lower}. This condition, which entails, roughly speaking, that the support of any iterate is restricted by the span of all preceding iterates, plays an important role in establishing lower bounds for a variety of optimization settings~\cite{arjevani2023lower, carmon2021lower}.

\paragraph{When \texorpdfstring{$s<\infty$}{TEXT}: Proximal Point Oracles.}
We begin by proving lower bounds for zero-respecting proximal point algorithms when minimizing a function $\ff$ using an $\ell_p^s(\lambda)$-proximal oracle, where we let $\calO_{\ff, \lambda,p,s}(c)$ denote the result upon querying the oracle at $c \in \R^d$.

We first introduce an appropriate notion of \emph{zero-respecting algorithms} for our setting. Namely, letting $\supp{\vv} := \setof{i \in [d] : \vv_i \neq 0}$ for $\vv \in \rea^d$, we say a sequence $\xx^{(0)}, \xx^{(1)}, \dots$ is \emph{$\ell_p^s(\lambda)$-proximal zero-respecting with respect to $\ff$} if
\[
\supp{\xx^{(t)}} \subseteq \bigcup_{t' < t} \supp{\calO_{\ff, \lambda,p,s}(\xx^{(t')})}\ \textrm{for each}\ t \in \nat.
\]

We may then naturally define an \emph{$\ell_p^s(\lambda)$-proximal zero-respecting} algorithm.

\begin{definition}[$\ell_p^s(\lambda)$-Proximal Zero-Respecting Algorithm]\label{def:zero-resp-alg}
    We say an algorithm $\calA$ is \emph{$\ell_p^s(\lambda)$-proximal zero-respecting} if, for any $f: \rea^d \rightarrow \rea$, the iterate sequence $\calA[f]$ is zero-respecting with respect to $f$.
\end{definition}

We further consider the following proximal-type notion of a \emph{zero-chain}.
\begin{definition}[$\ell_p^s(\lambda)$-Proximal Zero-Chain]
    A function $\ff: \rea^d \rightarrow \rea$ is an \emph{$\ell_p^s(\lambda)$-proximal zero-chain} if, for every $\xx \in \rea^d$,
        $\supp{\xx} \subseteq \setof{1,\dots,i-1}$ implies $\supp{\calO_{\ff, \lambda,p,s}(\xx)} \subseteq \setof{1,\dots,i}$.
\end{definition}
We would note that, while previous lower bounds for smooth convex optimization~\cite{arjevani2019oracle, nemirovskij1983problem} hold for algorithms that can generate approximate proximal point oracle updates (up to polylogarithmic error), here we provide lower bounds for when we have an \emph{exact} proximal point oracle. 
We now state our main lower bound theorem for this section, where we recall that $x^* := \arg\min_{\xx \in \R^d} g_k(\xx)$.

\LowerBound*

While we provide the full proof of Theorem~\ref{thm:LowerBound-main} in Appendix~\ref{app:LBsTheorem}, the key construction is based on a scaled instance of Nemirovski's function~\cite{nemirovskij1983problem}, whose parameters depend on properties (i.e., $p, s, \lambda$) defining the proximal oracle.

\begin{definition}[Scaled Nemirovski instance]\label{def:Nemirovski}
Let $k, d \in \nat$ be such that $1 \leq k\leq d$, and let $R > 0$. We define
\[
\ff_k(\xx) = \beta_k\max_{1\leq i\leq k}\{ \xx_i - i\cdot \alpha_k\},
\]
where 
\[
\beta_k := \left(\frac{s-1}{s}\right)^{2(s-1)}\frac{s\lambda R^{s-1}}{(k+1)^{\frac{(p+1)(s-1)}{p}}},\ \alpha_k := \frac{s}{s-1}\left(\frac{\beta_k}{s\lambda}\right)^{\frac{1}{s-1}} = \frac{(s-1)R}{s(k+1)^{\frac{p+1}{p}}}.
\]
\end{definition}

\noindent Following the approach as described in previous works~\cite{carmon2020acceleration, diakonikolas2019lower}, we may handle unconstrained domains by considering the following problem:
\begin{equation}\label{eq:HardProb}
    \min_{\xx \in R^d} \gg_k(\xx) := \max\left\{\ff_k(\xx), \beta_k(\norm{\xx}_p - 2R) - \alpha_k\right\}.
\end{equation}

The following lemma, which is proved in Appendix~\ref{app:LB}, shows that the iterates produced by our $\ell_p^s(\lambda)$-proximal oracle satisfy the required condition on their support.  
\begin{restatable}{lemma}{LemSpan}\label{lem:Span}
Let $k, d \in \nat$ be such that $1 \leq k\leq d$, and let $\gg_k$ be as in \eqref{eq:HardProb}. Then, $\gg_k$ is an $\ell_p^s(\lambda)$-proximal zero-chain.
\end{restatable}

\paragraph{When \texorpdfstring{$s = \infty$}{TEXT}: Ball Oracle.}
For this case, we consider the $\ell_p$ ball oracle, i.e.,
\[
\calO_{\ff,r,p,\infty}(\cc) = {\arg\min}_{\|\xx-\cc\|_p\leq r} \ff(\xx).
\]

Similar to the previous section, we may define analogous notions of \textit{$\ell_p^{\infty}(r)$-proximal zero-respecting algorithms} and an \textit{$\ell_p^{\infty}(r)$-proximal zero-chain}. We say a sequence $\xx^{(0)}, \xx^{(1)}, \dots$ is \emph{$\ell_p^{\infty}(r)$-proximal zero-respecting with respect to $\ff$} if
\[
\supp{\xx^{(t)}} \subseteq \bigcup_{t' < t} \supp{\calO_{\ff, r,p,\infty}(\xx^{(t')})}\ \textrm{for each}\ t \in \nat,
\]
We may then similarly define a \emph{$\ell_p^{\infty}(r)$-proximal zero-respecting} algorithm.

\begin{definition}[$\ell_p^{\infty}(r)$-Proximal Zero-Respecting Algorithm]\label{def:infProxAlg}
    We say an algorithm $\calA$ is \emph{$\ell_p^{\infty}(r)$-proximal zero-respecting} if, for any $f: \rea^d \rightarrow \rea$, the iterate sequence $\calA[f]$ is $\ell_p^{\infty}(r)$-proximal zero-respecting with respect to $f$.
\end{definition}

We further consider the following ball oracle-type notion of a $\ell_p^{\infty}(r)$-\emph{zero-chain}.
\begin{definition}[$\ell_p^{\infty}(r)$-Proximal Zero-Chain]
    A function $\ff: \rea^d \rightarrow \rea$ is an \emph{$\ell_p^{\infty}(r)$-proximal zero-chain} if, for every $\xx \in \rea^d$,
        $\supp{\xx} \subseteq \setof{1,\dots,i-1}$ implies $\supp{\calO_{\ff, r,p,\infty}(\xx)} \subseteq \setof{1,\dots,i}$.
\end{definition}

We now state the main result (Theorem~\ref{thm:LowerBoundInf-main}) of this section, whose proof can be found in Appendix~\ref{app:LBinfThm}.
\LowerBoundInf*

\section*{Acknowledgments}
DA is supported by Dr. Max Rössler, the Walter Haefner Foundation and the ETH Zürich Foundation. AS  was supported in part by a Microsoft Research Faculty Fellowship, NSF CAREER Grant CCF-1844855, NSF Grant CCF-1955039, and a PayPal research award. Part of this work was conducted while the authors were visiting the Simons Institute for the Theory of Computing.

\printbibliography

\newpage
\appendix

\section{Preliminaries}

\begin{definition}[Bregman Divergence]
    For a strictly convex, continuously differentiable  $\dd:D \rightarrow \rea_{\geq 0}$, we define its Bregman divergence between $x\in D$ and $y\in D$ as
    \[
    \omega(\xx,\yy) := \dd(\xx) - \dd(\yy) - \langle \nabla\dd(\yy), \xx-\yy\rangle.
\]
\end{definition}
    Note that $\omega_p$ is the Bregman divergence of the function $\dd(\xx) = \|\xx-\xx_0\|_p^p$ for an initial point $\xx_0$. 

\begin{lemma}[Three-Point Property]\label{lem:3Point} For any $x,y,z \in D$,
\[
    \omega(\xx,\yy) = \omega(\xx,\zz) + \omega(\zz,\yy) - (\xx-\zz)^{\top}(\nabla \dd(\yy)-\nabla\dd(\zz)).
\]
\end{lemma}

We also use the following result, which can be found in, e.g.,~\cite{zǎlinescu1983uniformly}.

\begin{lemma}[Lemma 3.1~\cite{zǎlinescu1983uniformly}]\label{lem:PrecAKPS}
    For any $\xx, \yy$ and $p \geq 2$, $\omega_p(\yy,\xx)\geq \frac{1}{2^{p+1}}\|\yy-\xx\|_p^p.$
    \end{lemma}
We use the following fact about the gradient and Hessian of the $\ell_p$-norm function repeatedly.
\begin{fact}\label{fact:grad}
    The gradient and Hessian of the function $\|Ax\|_p^p$, for any $p\geq 2$, and matrix $A\in \rea^{n\times d}$ and vector $x\in \rea^d$ is,
    \[
    \nabla_{\xx} \|Ax\|_p^p = p \cdot  \diag{|Ax|^{p-2}}Ax, \quad \nabla^2_{xx}\|Ax\|_p^p = p(p-1) A^{\top}\diag{|Ax|^{p-2}}A.
    \]
\end{fact}
\section{Proofs for Conceptual Method}\label{app:concept}

\subsection{Proof of Lemma~\ref{lem:Potential}}
\begin{proof}
    We start by considering the left-hand side of the inequality
    \begin{align*}
        A_{t+1}&\left(\ff(\xx_{t+1})-\ff(\xx^{\star})\right) - A_{t}\left(\ff(\xx_{t})-\ff(\xx^{\star})\right)\\
       & = a_{t+1}\left(\ff(\xx_{t+1})-\ff(\xx^{\star})\right) + A_t \left(\ff(\xx_{t+1})-\ff(\xx_t)\right)\\
       & \leq  a_{t+1}\langle \nabla \ff(\xx_{t+1}),\xx_{t+1}-\xx^{\star}\rangle + A_t\langle \nabla \ff(\xx_{t+1}),\xx_{t+1}-\xx_t\rangle &&\text{(convexity of $\ff$)}\\
      & =  \langle \nabla \ff(\xx_{t+1}),A_{t+1}\xx_{t+1} - A_t\xx_t - a_{t+1}\xx^{\star}\rangle\\
      & =  \langle \nabla \ff(\xx_{t+1}),A_{t+1}\xx_{t+1} +a_{t+1}\zz_t-A_{t+1}\yy_t  - a_{t+1}\xx^{\star}\rangle &&\text{(Since $A_{t+1} y_t = A_t x_t + a_{t+1} z_t$)}\\
      & =  A_{t+1}\underbrace{\langle\nabla \ff(\xx_{t+1}),\xx_{t+1} -\yy_t  \rangle}_{\text{Term 1}} + a_{t+1}\underbrace{\langle \nabla \ff(\xx_{t+1}),\zz_t  - \xx^{\star}\rangle}_{\text{Term 2}}.
    \end{align*}
    We first give a bound on Term 2. Using the KKT conditions for $z_t$, i.e., differentiating the equation defining $z_t$ in the algorithm w.r.t $z$, we have for all $t \geq 0$,
    \[
    \sum_{i \in [t]} a_i \nabla \ff(\xx_i) = - \nabla_{\zz}\|\zz-\yy_0\|_p^p\Big\vert_{\zz=\zz_t}
    = - \nabla\omega_p(\zz_t,\yy_0)\,
    \enspace\text{for all}\enspace t,
    \]
    where we used that $\omega_p(\zz_t,\yy_0) = \norm{\zz_t - \yy_0}_p^p$ since $\xx_0=\yy_0$. Therefore, 
    \[
    a_{t+1} \nabla\ff(\xx_{t+1}) = \nabla \omega_p(\zz_t,\yy_0) - \nabla \omega_p(\zz_{t+1},\yy_0).
    \]
    Replacing $a_{t+1}\nabla\ff(\xx_{t+1})$ with the above in Term 2, we get
    \begin{align*}
        & a_{t+1}\langle \nabla \ff(\xx_{t+1}),\zz_t  - \xx^{\star}\rangle
        =   a_{t+1}\langle \nabla \ff(\xx_{t+1}),\zz_{t+1}  - \xx^{\star}\rangle +  a_{t+1}\langle \nabla \ff(\xx_{t+1}),\zz_t  - \zz_{t+1}\rangle\\
        = & \langle \nabla \omega_p(\zz_t,\yy_0) - \nabla \omega_p(\zz_{t+1},\yy_0), \zz_{t+1} - \xx^{\star}\rangle + a_{t+1}\langle \nabla \ff(\xx_{t+1}),\zz_t  - \zz_{t+1}\rangle.
        \end{align*}
        Applying the Bregman three-point property (Lemma~\ref{lem:3Point}) that yields that
        \[
         a_{t+1}\langle \nabla \ff(\xx_{t+1}),\zz_t  - \xx^{\star}\rangle  = \omega_p(\xx^{\star},\zz_{t}) - \omega_p(\xx^{\star},\zz_{t+1}) - \omega_p(\zz_{t},\zz_{t+1}) + a_{t+1}\langle \nabla \ff(\xx_{t+1}),\zz_t  - \zz_{t+1}\rangle\,.
        \]
      We now apply Young's inequality to get
      \begin{multline*}
       a_{t+1}\langle \nabla \ff(\xx_{t+1}),\zz_t  - \xx^{\star}\rangle\leq \omega_p(\xx^{\star},\zz_{t}) - \omega_p(\xx^{\star},\zz_{t+1}) - \omega_p(\zz_{t},\zz_{t+1})\\  + \frac{p-1}{p}\|2 a_{t+1}\nabla \ff(\xx_{t+1})\|_{p^*}^{p^*} + \frac{1}{p2^p}\|\zz_t-\zz_{t+1}\|_p^p.
      \end{multline*}
    Since $\omega_p(\xx,\yy) \geq \frac{1}{2^{p+1}}\|\xx-\yy\|_p^p$ (Lemma~\ref{lem:PrecAKPS}), we obtain the following bound on Term 2,
    \begin{equation}\label{eq:Term2prox}
            a_{t+1}\langle \nabla \ff(\xx_{t+1}),\zz_t  - \xx^{\star}\rangle \leq \omega_p(\xx^{\star},\zz_{t}) - \omega_p(\xx^{\star},\zz_{t+1})  + 4a_{t+1}^{p^*}\frac{p-1}{p}\|\nabla \ff(\xx_{t+1})\|_{p^*}^{p^*}.
    \end{equation}
    
    We next bound Term 1. First, we observe by the optimality guarantees of $\xx_{t+1}$ that
    \[
    \nabla \ff(\xx_{t+1}) =  -\lambda_t p \cdot  \diag{|\xx_{t+1}-\yy_t|}^{p-2}(\xx_{t+1}-\yy_t),
    \]
    where the above derivative is computed using Fact~\ref{fact:grad}.
    Therefore,
    \begin{equation}\label{eq:Term1prox}
        \langle\nabla \ff(\xx_{t+1}),\xx_{t+1} -\yy_t  \rangle
       =-\lambda_t\norm{\xx_{t+1} -\yy_t}_p^p.
        \end{equation}
Combining the bounds on Term 1~\eqref{eq:Term2prox} and Term 2~\eqref{eq:Term1prox} yield 
      \begin{align*}
         &  A_{t+1}\left(\ff(\xx_{t+1})-\ff(\xx^{\star})\right) - A_{t}\left(\ff(\xx_{t})-\ff(\xx^{\star})\right)\\
         \leq & - A_{t+1} \langle \nabla\ff(\xx_{t+1}), \xx_{t+1} -\yy_t \rangle + \omega_p(\xx^{\star},\zz_{t}) - \omega_p(\xx^{\star},\zz_{t+1})  + 4a_{t+1}^{p^*}\frac{p-1}{p}\|\nabla \ff(\xx_{t+1})\|_{p^*}^{p^*}\\
         \leq & - A_{t+1}\lambda_t \|\xx_{t+1}-\yy_t\|_p^p + \omega_p(\xx^{\star},\zz_{t}) - \omega_p(\xx^{\star},\zz_{t+1})  + 4a_{t+1}^{p^*}\lambda_t^{p^*}\cdot \frac{p-1}{p} \|\xx_{t+1}-\yy_t\|_{p}^{p}
      \end{align*}
      Now, since $A_{t+1}^{p-1} =  5^{p-1}\lambda_t a_{t+1}^p$, the above becomes
      \begin{equation*}
      A_{t+1}\left(\ff(\xx_{t+1})-\ff(\xx^{\star})\right)  + \omega_p(\xx^{\star},\zz_{t+1})
      \leq A_{t}\left(\ff(\xx_{t})-\ff(\xx^{\star})\right) + \omega_p(\xx^{\star},\zz_{t}) - A_{t+1}\lambda_t \|\xx_{t+1}-\yy_t\|_{p}^{p}.
      \end{equation*}
\end{proof}

\subsection{Proof of Theorem~\ref{thm:ConceptualProx}}
We need to prove the existence of $\lambda_t$ $(t \geq 0)$ satisfying the conditions of Algorithm~\ref{alg:ProxQ}, using a similar continuity argument as in Lemma 3.2 of~\cite{bubeck2019near}.
\begin{lemma}\label{lem:Continuity}
    Let $A\geq 0, \xx,\yy\in \R^d$ such that $\ff(\xx)\neq \ff(\xx^{\star})$. Also let $\zz(\xx)$ be a continuous function of $\xx$. Define the following functions:
    \begin{itemize}
        \item $a(\lambda)$ is the solution of the equation $\lambda^{\frac{1}{p-1}}a(\lambda)^{p^*} -a(\lambda)-A=0$.
        \item $\yy(\lambda) = \frac{a(\lambda)}{A+a(\lambda)}z(\xx) + \frac{A}{A+a(\lambda)}\xx$
        \item $\xx(\lambda) = \arg\min_{\ww} \ff(\ww) + \lambda\|\ww-\yy(\lambda)\|_p^p$
        \item $\gg(\lambda) = \frac{\|\xx(\lambda)-\yy(\lambda)\|_p^{s-p}}{\lambda}$
    \end{itemize}
    Then, for every $z\in \mathbb{R}_+$ there exists $w\in \mathbb{R}_+$ such that $\gg(w) = z$. 
\end{lemma}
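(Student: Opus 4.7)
The plan is to apply the Intermediate Value Theorem on $(0,\infty)$, which reduces the lemma to three claims: $\gg$ is continuous on $(0,\infty)$, $\lim_{\lambda \to \infty} \gg(\lambda) = 0$, and $\lim_{\lambda \to 0^{+}} \gg(\lambda) = +\infty$. Continuity together with these two boundary behaviors forces $\gg$ to attain every value in $\mathbb{R}_+$.

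For continuity I would work outward from $a$. The equation $H(\lambda,a) \defeq \lambda^{1/(p-1)} a^{p/(p-1)} - a - A = 0$ is continuous in $a$ with $H(\lambda,0) = -A \leq 0$ and $H(\lambda,a) \to +\infty$ as $a \to \infty$; past its unique turning point it is strictly increasing, so the implicit function theorem yields continuity of $\lambda \mapsto a(\lambda)$ on the relevant branch. Continuity of $\yy(\lambda)$ then follows from continuity of $\zz$ and $a$. For $\xx(\lambda)$ (taking the center $\cc$ to be $\yy(\lambda)$ as dictated by the algorithmic context), the objective $\ww \mapsto \ff(\ww) + \lambda \|\ww - \yy(\lambda)\|_p^p$ is strictly convex and coercive on $\mathbb{R}^n$, so its minimizer is unique and depends continuously on $(\lambda, \yy(\lambda))$ by Berge's maximum theorem; composing gives continuity of $\gg$.

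For the two limits, the equation $\lambda^{1/(p-1)} a^{p/(p-1)} = a + A$ forces $a(\lambda) \to 0$ as $\lambda \to \infty$ and $a(\lambda) \sim 1/\lambda \to \infty$ as $\lambda \to 0^{+}$. In the first regime $\yy(\lambda) \to \xx$, and the optimality condition
\[
\nabla \ff(\xx(\lambda)) = -\lambda p \cdot \diag{|\xx(\lambda) - \yy(\lambda)|}^{p-2}(\xx(\lambda) - \yy(\lambda)),
\]
together with local boundedness of $\nabla \ff$ near $\xx$, gives $\|\xx(\lambda) - \yy(\lambda)\|_p = O(\lambda^{-1/(p-1)})$, hence $\gg(\lambda) = O(\lambda^{-(s-1)/(p-1)}) \to 0$ since $s > p \geq 2$. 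In the second regime $\yy(\lambda) \to \zz(\xx)$ while the vanishing regularization forces $\xx(\lambda) \to \xx^{\star}$; provided $\zz(\xx) \neq \xx^{\star}$, the numerator $\|\xx(\lambda) - \yy(\lambda)\|_p^{s-p}$ stays bounded below by a positive constant while the denominator shrinks to $0$, yielding $\gg(\lambda) \to +\infty$.

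The main obstacle is twofold: justifying the continuity of the argmin map cleanly (Berge's theorem handles this, using coercivity of the regularized objective to supply compact sublevel sets) and ruling out the degenerate case $\zz(\xx) = \xx^{\star}$ in the limit $\lambda \to 0^{+}$. For the degenerate case, I would appeal to a finer scaling argument using $\yy(\lambda) - \xx^{\star} = \tfrac{A}{A + a(\lambda)}(\xx - \xx^{\star}) = O(\lambda)$ together with a perturbation analysis of the regularized optimizer near $\xx^{\star}$ to still produce blow-up of $\gg(\lambda)$; alternatively, the hypothesis $\ff(\xx) \neq \ff(\xx^{\star})$ together with the construction of $\zz$ in the algorithm can be invoked to exclude this case.
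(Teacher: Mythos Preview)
Your approach is essentially the same as the paper's: establish continuity of $\gg$ and then invoke the Intermediate Value Theorem via the limiting behavior $\gg(\lambda)\to 0$ as $\lambda\to\infty$ and $\gg(\lambda)\to\infty$ as $\lambda\to 0^+$. Your treatment is in fact more careful than the paper's very terse argument, which simply asserts the boundary values after noting strict convexity of the regularized objective, and does not explicitly address the degenerate case $\zz(\xx)=\xx^{\star}$ that you correctly flag.
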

\begin{proof}
We first claim that $\gg(\lambda)$ is a continuous function of $\lambda$. It is easy to see that $\yy(\lambda)$ is continuous. The continuity of $\xx(\lambda)$ follows from the fact that since $\ff$ is convex, $\ff(\ww) + \lambda\|\ww-\cc\|_p^p$ is strictly convex and there is a unique minimizer for every $\lambda$. Now, $\xx\neq \yy$ since $\ff(\xx) \neq \ff(\xx^{\star})$. Therefore, $\gg(0) = \infty$ and $\gg(\infty) = 0$, which concludes the proof. 
\end{proof}
The following lemma provides a means of lower bounding $A_t$ for all $t \geq 1$. 
\begin{lemma}\label{lem:ARelations}
For all $t \geq 1$, Algorithm~\ref{alg:ProxQ} guarantees that
  \[
  A_{t+1}^{1/p}\geq A_t^{1/p} + \frac{1}{5p\lambda_t^{1/p}}
  	\text{\ \  and, consequently,\ \  }
  A_t^{1/p} \geq A_0^{1/p} + \sum_{t \in [T]} \frac{1}{5 p \lambda_t^{1/p}}\,.
  \] 
\end{lemma}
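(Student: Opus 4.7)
The plan is to use the two algebraic identities defining the sequences, namely $A_{t+1}=A_t+a_{t+1}$ and $5\lambda_t a_{t+1}^p = A_{t+1}^{p-1}$, together with the concavity of $x\mapsto x^{1/p}$ for $p\ge 2$.

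First, solving the second identity for $a_{t+1}$ gives
\[
a_{t+1} \;=\; \frac{A_{t+1}^{(p-1)/p}}{(5\lambda_t)^{1/p}}.
\]
Substituting this into $A_{t+1}-A_t = a_{t+1}$ yields the equivalent form
\[
\frac{A_{t+1}-A_t}{A_{t+1}^{(p-1)/p}} \;=\; \frac{1}{(5\lambda_t)^{1/p}}.
\]

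Next I would invoke concavity. For $p\ge 1$ the function $\varphi(x)=x^{1/p}$ is concave on $[0,\infty)$, so for any $0\le A_t\le A_{t+1}$ the secant/tangent inequality at the right endpoint gives
\[
A_{t+1}^{1/p} - A_t^{1/p} \;\ge\; \varphi'(A_{t+1})\,(A_{t+1}-A_t) \;=\; \frac{A_{t+1}-A_t}{p\,A_{t+1}^{(p-1)/p}}.
\]
Combining this with the previous display, and using $5^{1/p}\le 5$, I obtain
\[
A_{t+1}^{1/p} - A_t^{1/p} \;\ge\; \frac{1}{p\,(5\lambda_t)^{1/p}} \;\ge\; \frac{1}{5p\,\lambda_t^{1/p}},
\]
which is the first claim. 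The second claim follows by telescoping this inequality from $0$ to $t-1$.

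The only subtle point is getting the direction of the concavity bound right: naively bounding the secant by $\varphi'(A_t)$ would give the wrong (upper) bound, so one must use $\varphi'(A_{t+1})$, which is legitimate precisely because $\varphi'$ is nonincreasing on a concave function. Everything else is direct algebraic substitution, so I do not expect any real obstacle.
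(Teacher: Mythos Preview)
Your proof is correct and follows essentially the same route as the paper: both arguments reduce to the concavity of $x\mapsto x^{1/p}$, with the paper invoking it via Bernoulli's inequality $(1+x)^p\ge 1+px$ and you via the tangent-line inequality $\varphi(A_{t+1})-\varphi(A_t)\ge \varphi'(A_{t+1})(A_{t+1}-A_t)$. Your formulation is slightly cleaner but the content is the same.
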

\begin{proof}
    We know from Algorithm~\ref{alg:ProxQ} that $A_{t+1} = A_t + a_{t+1}$ and $a_{t+1}^p = \frac{A_{t+1}^{p-1}}{5^{p-1}\lambda_t}$. Therefore,
    \[
    A_t = A_{t+1} - \left(\frac{A_{t+1}^{p-1}}{5^{p-1}\lambda_t}\right)^{1/p} = A_{t+1} \left( 1 - \frac{1}{(5^{p-1}\lambda_t A_{t+1})^{1/p}}\right).
    \]
    We will use that for any $x>-1$ and $r>1$, $(1+x)^r \geq 1+rx.$ Observe that $\frac{1}{(5^{p-1}\lambda_t A_{t+1})^{1/p}} = \frac{a_{t+1}}{A_{t+1}} <1 $. Applying this inequality for $r = p$ and $x = - \frac{1}{(5^{p-1}\lambda_t A_{t+1})^{1/p}}$, we get
    \[
    A_t  \leq A_{t+1} \left( 1- \frac{1}{(5^{p-1}p\lambda_t A_{t+1})^{1/p}}\right)^p,
    \]
    which on rearranging gives us
    \begin{equation*}
    A_t^{1/p} +\frac{1}{5p \lambda_t^{1/p}} \leq A_{t+1}^{1/p}.
    \end{equation*}
\end{proof}
Before we prove our main result, we state a result from~\cite{carmon2022optimal} which is useful in our final proof.
\begin{lemma}[Lemma 3,~\cite{carmon2022optimal}]\label{lem:LBSeqA}
Let $B_1, \cdots, B_k \in \mathbb{R}_{>0}$, $r_1, \cdots, r_k\in \mathbb{R}_{>0}$  satisfy $B_i^m
 \geq \beta \sum_{j\in[i]} B_j r_j$ for some $m > 1$, $\beta>0$, and all $i\in [k]$. Then $B_i \geq (\frac{m- 1}{m} \beta \cdot \sum_{j\in [i]} r_j)^{\frac{1}{m-1}}$ for all $i \in [k]$.
\end{lemma}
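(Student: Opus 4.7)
The plan is to convert the discrete recurrence into a telescoping inequality that mirrors the continuous identity $\tfrac{m}{m-1}\tfrac{d}{dt}C^{(m-1)/m} = C'/C^{1/m}$, which is the natural integrating-factor trick for the ODE analogue $C'(t) \ge (\beta C(t))^{1/m} r(t)$. Concretely, introduce the partial sums
\[
C_i \defeq \sum_{j \in [i]} B_j r_j
\qquad\text{and}\qquad
R_i \defeq \sum_{j \in [i]} r_j,
\]
with the convention $C_0 = 0$. The hypothesis then reads $B_i^m \ge \beta C_i$, and since $C_i - C_{i-1} = B_i r_i$, this gives the one-step inequality
\[
C_i - C_{i-1} \;\ge\; (\beta C_i)^{1/m}\, r_i
\qquad\Longleftrightarrow\qquad
\frac{C_i - C_{i-1}}{C_i^{1/m}} \;\ge\; \beta^{1/m}\, r_i.
\]

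The key step (which I expect to be the main technical point) is to upper bound the left-hand side by a telescoping difference. For this I would use the concavity of $\phi(x) = x^{(m-1)/m}$ on $\R_{>0}$: since $\phi'(x) = \tfrac{m-1}{m}x^{-1/m}$ is decreasing, for $C_i \ge C_{i-1} \ge 0$ we have
\[
\phi(C_i) - \phi(C_{i-1}) \;=\; \int_{C_{i-1}}^{C_i} \phi'(t)\,dt \;\ge\; (C_i - C_{i-1})\,\phi'(C_i) \;=\; \frac{m-1}{m}\cdot \frac{C_i - C_{i-1}}{C_i^{1/m}}.
\]
Combining with the previous display yields
\[
\frac{m}{m-1}\bigl(C_i^{(m-1)/m} - C_{i-1}^{(m-1)/m}\bigr) \;\ge\; \beta^{1/m}\, r_i.
\]
Summing this inequality from $j = 1$ to $i$ telescopes (using $C_0 = 0$) to
\[
C_i^{(m-1)/m} \;\ge\; \frac{m-1}{m}\,\beta^{1/m}\, R_i,
\qquad\text{i.e.,}\qquad
C_i \;\ge\; \paren{\frac{m-1}{m}}^{\!m/(m-1)} \beta^{1/(m-1)}\, R_i^{m/(m-1)}.
\]

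Finally, substituting this lower bound on $C_i$ back into the hypothesis $B_i \ge (\beta C_i)^{1/m}$ gives
\[
B_i \;\ge\; \beta^{1/m} \cdot \paren{\frac{m-1}{m}}^{\!1/(m-1)} \beta^{1/(m(m-1))}\, R_i^{1/(m-1)} \;=\; \paren{\frac{m-1}{m}\,\beta\, R_i}^{1/(m-1)},
\]
where we used $\tfrac{1}{m} + \tfrac{1}{m(m-1)} = \tfrac{1}{m-1}$. This is the claimed bound for each $i \in [k]$. The only delicate point is the concavity-based telescoping estimate; everything else is bookkeeping of the exponents.
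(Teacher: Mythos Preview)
Your proof is correct: the concavity/tangent-line estimate for $\phi(x)=x^{(m-1)/m}$ gives the telescoping step cleanly, the boundary case $C_0=0$ causes no trouble since $(m-1)/m>0$, and the exponent bookkeeping at the end checks out. Note, however, that the paper does not actually prove this lemma; it simply quotes it as Lemma~3 of \cite{carmon2022optimal}, so there is no in-paper argument to compare against. Your argument is the standard ``discrete Gr\"onwall/integrating-factor'' proof one would expect for this statement.
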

We are now ready to prove the main result of Section~\ref{sec:Prox}. 
\ThmConceptProx*
\begin{proof}[Proof of \Cref{thm:ConceptualProx}]
Let $r_t \defeq  \|\xx_{t+1}- \yy_t\|_p$. Note that $\lambda_t = \lambda r_t^{s-p}$ and therefore $\lambda_t \|\xx_{t+1}- \yy_t\|_p^p = \lambda_t^{\frac{s}{s- p}}\lambda^{-\frac{p}{s-p}}$. Since we are using an $\ell_p^s(\lambda)$-proximal oracle, the conditions of \Cref{lem:Potential} are true. The lemma then implies that for all $T \geq 1$
\begin{equation}\label{eq:PotHelp}
A_{T}\left(\ff(\xx_{T})-\ff(\xx^{\star})\right) + \omega_p(\xx^{\star},\zz_{T}) \leq A_{0}\left(\ff(\xx_{0})-\ff(\xx^{\star})\right) + \omega_p(\xx^{\star},\zz_{0}) - \sum_{t \in [T]} A_t \lambda_t^{\frac{s}{s- p}}\lambda^{-\frac{p}{s-p}}
\,.
\end{equation}
Since $\ff(\xx_T) - \ff(\xx^{\star}) \geq 0$ and $\omega_p(\xx^{\star}, \zz_t) \geq 0$ for all $t$,
\[
\lambda^{-\frac{p}{s-p}}\sum_{t \in [T]} A_t \lambda_t^{\frac{s}{s- p}} \leq  \omega_p(\xx^{\star},\zz_{0})
= \Psi_0
\]
From \Cref{lem:ARelations}, $A_t^{1/p} \geq  \sum_{t \in [T]} \frac{1}{5 p \lambda_t^{1/p}}$. Applying Holder's inequality with $\alpha = \frac{1}{\nu}$ yields,
\begin{align*}
\sum_{t \in [T]} A_t^{\frac{1}{1 + \alpha}} 
	&= \sum_{t \in [T]} A_t^{\frac{1}{1 + \alpha}}  \lambda_t^{\frac{1}{p(1 + \alpha^{-1})}} \lambda_t^{-\frac{1}{p(1 + \alpha^{-1})}}
	\leq \left(
		\sum_{t \in [T]} A_t \lambda_t^{\frac{s}{s - p}}
	  \right)^{(1 + \alpha)^{-1}}
	   \left(
	  \sum_{t \in [T]} \lambda_t^{- \frac{1}{p}}
	  \right)^{(1 + \alpha^{-1})^{-1}} \\
	 &\leq
	\left( \lambda^{\frac{p}{s-p}}\Psi_0\right)^{\frac{1}{1 + \alpha}}
	 \left(5p A_T^{1/p}\right)^{\frac{\alpha}{1 + \alpha}}
\end{align*}
where we used that $\frac{\alpha}{1 + \alpha} = \frac{1}{1 + \alpha^{-1}}$ and $(1 + \alpha)^{-1} + (1 + \alpha^{-1})^{-1} = 1$.

We now use the above to give a lower bound on $A_T$. Using Lemma~\ref{lem:LBSeqA}, for $m = \alpha/p$, $B_j = A_t^{\frac{1}{1+\alpha}}$, $r_j = 1$, $\beta = \left((5p)^{\alpha}\lambda^{\frac{p}{s-p}}\Psi_0\right)^{-1/(1+\alpha)}$,
\[
A_T^{\frac{1}{1+\alpha}} \geq \left(\frac{\alpha-p}{\alpha}\left((5p)^{\alpha}\lambda^{\frac{p}{s-p}}\Psi_0\right)^{-1/(1+\alpha)}\cdot T \right)^{\frac{p}{\alpha-p}} = \left(\frac{p}{s} \left(\frac{1}{(5p)^{\alpha} \lambda^{\frac{p}{s-p}}\Psi_0}\right)^{\frac{1}{1+\alpha}} \cdot T\right)^{\frac{s-p}{p}}.
\]
Therefore 
\[
A_T \geq \left(\frac{1}{s}\right)^{\frac{s-p+sp}{p}} p^{\frac{s-p}{p}} \left(\frac{1}{\lambda^{\frac{p}{s-p}}\Psi_0}\right)^{\frac{s-p}{p}} \cdot T^{\frac{ps + s-p}{p}}.
\]
Further, using Eq.~\eqref{eq:PotHelp} and $\nu = \frac{1}{p} - \frac{1}{s}$,
\[
f(\xx_T) - \ff(\xx^{\star} ) \leq \frac{\Psi_0}{A_T} \leq \left(\frac{s^{\frac{s-p+sp}{p}}}{p^{\frac{s-p}{p}}}\right) \frac{\lambda \Psi_0^{s/p}}{T^{\frac{sp+s-p}{p}}} = \left(\frac{s^{s(1+\nu)}}{p^{s\nu}}\right) \frac{\lambda \Psi_0^{s/p}}{T^{s(1+\nu)}}.
\]

The result now follows from noting that $\Psi_0 = \|\xx^{\star}-\xx_0\|_p^p$

\end{proof}

\subsection{Proof of Theorem~\ref{thm:BallOracle}}

\begin{lemma}\label{lem:HelpBall}
Let $\ff : \R^d \rightarrow \R$ be continuously differentiable and strictly convex. For all $\cc \in \R^d$, define
\[
\xx = \argmin_{x \in \R^d | \|\xx-\cc\|_p\leq r}\ \ff(\xx).
\] 
Then $\xx$ is either the global minimizer of $\ff$, or $\|\xx-\cc\|_p = r$ and 

\[\nabla \ff(\xx) = -\frac{\|\nabla \ff(\xx)\|_{p/(p-1)}}{r^{p-1}} \diag{|\xx-\cc|}^{p-2}(\xx-\cc).
\]
\end{lemma}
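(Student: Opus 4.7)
The plan is to treat this as a standard KKT calculation, splitting into the interior and boundary cases, and then evaluating the Lagrange multiplier explicitly by taking dual norms.

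First, I would observe that strict convexity plus continuous differentiability of $\ff$, together with the compact convex feasible set $\{\xx : \|\xx-\cc\|_p\le r\}$, guarantees that the minimizer $\xx$ exists and is unique. If $\|\xx-\cc\|_p < r$, then $\xx$ lies in the relative interior of the constraint set and is an unconstrained local minimum of $\ff$; hence $\nabla\ff(\xx)=0$ and, by convexity, $\xx$ is the global minimizer. This handles the first alternative.

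For the second alternative, suppose $\|\xx-\cc\|_p = r$ and $\xx$ is not the global minimizer (so $\nabla\ff(\xx)\neq 0$). I would use the smooth reformulation of the constraint as $h(\yy) \defeq \tfrac{1}{p}\|\yy-\cc\|_p^p - \tfrac{1}{p}r^p \le 0$, which is continuously differentiable for $p\ge 2$ with
\[
\nabla h(\yy) = \diag{|\yy-\cc|}^{p-2}(\yy-\cc).
\]
Since the interior of the ball is nonempty, Slater's condition holds, so the KKT conditions yield a multiplier $\mu\ge 0$ with
\[
\nabla \ff(\xx) + \mu\,\diag{|\xx-\cc|}^{p-2}(\xx-\cc) = 0.
\]
Because $\nabla\ff(\xx)\neq 0$, complementary slackness forces $\mu>0$ and $\|\xx-\cc\|_p=r$, matching the claimed form up to identifying $\mu$.

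To pin down $\mu$, I would take the $\ell_{p/(p-1)}$-norm on both sides of the KKT identity. The $i$-th coordinate of $\diag{|\xx-\cc|}^{p-2}(\xx-\cc)$ has absolute value $|\xx_i-\cc_i|^{p-1}$, so
\[
\bigl\|\diag{|\xx-\cc|}^{p-2}(\xx-\cc)\bigr\|_{p/(p-1)}
= \Bigl(\sum_i |\xx_i-\cc_i|^{(p-1)\cdot p/(p-1)}\Bigr)^{(p-1)/p}
= \|\xx-\cc\|_p^{\,p-1} = r^{p-1}.
\]
Therefore $\mu = \|\nabla \ff(\xx)\|_{p/(p-1)} / r^{p-1}$, which when substituted back gives exactly the claimed identity. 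The only nontrivial technical step is this dual-norm computation, but it is a direct consequence of $q(p-1)=p$ for $q=p/(p-1)$, so I do not anticipate any real obstacle.
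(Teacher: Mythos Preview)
Your proposal is correct and follows essentially the same approach as the paper: both write the KKT/Lagrangian conditions for the constraint $\tfrac{1}{p}\|\yy-\cc\|_p^p \le \tfrac{1}{p}r^p$, split on whether the multiplier is zero or positive, and in the latter case take the $\ell_{p/(p-1)}$-norm of the stationarity identity to solve for the multiplier as $\|\nabla\ff(\xx)\|_{p/(p-1)}/r^{p-1}$. Your version is slightly more explicit about existence, uniqueness, and Slater's condition, but the argument is the same.
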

\begin{proof}
The Lagrange dual gives,
\[
\min_{\yy}\max_{\lambda} \ff(\yy) + \frac{\lambda}{p}\left( r^p - \|\yy-\cc\|_p^p \right).
\]
There is some $\lambda\geq 0$ such that,
\[
\nabla\ff(\xx) = -\lambda \diag{|\xx-\cc|}^{p-2}(\xx-\cc).
\]
If $\lambda = 0$, then $\nabla\ff(\xx)=0$ and $\xx$ is the optimizer of $\ff$. If $\lambda>0$, then $\|\xx-\cc\|_p = r$, and as a result,
\[
\|\nabla\ff(\xx)\|_{p/(p-1)} = \lambda \|\xx-\cc\|_p^{p-1} = \lambda r^{p-1}.
\]
From the above we get, $\lambda = \frac{\|\nabla\ff(\xx)\|_{p/(p-1)} }{r^{p-1}}$, concluding the proof.
\end{proof}

We now state one final result from~\cite{carmon2022optimal} and then prove the main result of the section.
\begin{lemma}[Lemma 4,~\cite{carmon2022optimal}]\label{lem:LBSeqAInf}
Let $B_1, \cdots, B_k \in \mathbb{R}_{>0}$ be non-decreasing and, $r_1, \cdots, r_k\in \mathbb{R}_{>0}$ satisfy $B_i
 \geq \beta \sum_{j\in[i]} B_j r_j$  for some $\beta>0$, and all $i \in [k]$. Then $B_i \geq \exp\left(\beta \cdot \sum_{j\in [i]}r_j - 1\right)B_1$ for all $i \in [k]$.
\end{lemma}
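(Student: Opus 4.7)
The plan is to prove $B_i \geq B_1 \exp(R_i - 1)$, where $R_i := \beta \sum_{j \leq i} r_j$, by strong induction on $i$, crucially using \emph{both} the non-decreasing property and the summation constraint. Normalizing by $B_1$, set $T_i := B_i/B_1$, so $T_1 = 1$, the sequence $(T_i)$ is non-decreasing, and the hypothesis reads $T_i \geq \beta \sum_{j \leq i} T_j r_j$. For the base $i=1$, plugging in yields $1 \geq \beta r_1$, hence $R_1 \leq 1$ and $T_1 = 1 \geq e^{R_1 - 1}$.

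For the inductive step, the case $R_i \leq 1$ is immediate: $T_i \geq T_1 = 1 \geq e^{R_i - 1}$. When $R_i > 1$, rearrange the hypothesis as $T_i(1 - \beta r_i) \geq \beta \sum_{j < i} T_j r_j$ (note that $\beta r_i \leq 1$ since $T_i \geq \beta T_i r_i$). To lower bound the right-hand side, I combine the inductive hypothesis $T_j \geq e^{R_j - 1}$ with the non-decreasing bound $T_j \geq 1$ to get $T_j \geq M(R_j)$, where $M(x) := \max(1, e^{x-1})$ is non-decreasing. Because $M$ is non-decreasing, a right-Riemann sum comparison on the partition $0 = R_0 < R_1 < \cdots < R_{i-1}$ gives $\beta \sum_{j < i} r_j M(R_j) = \sum_{j<i}(R_j - R_{j-1}) M(R_j) \geq \int_0^{R_{i-1}} M(y)\,dy = N(R_{i-1})$, where $N(x) = x$ for $x \leq 1$ and $N(x) = e^{x-1}$ for $x \geq 1$. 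Hence $T_i \geq N(R_{i-1})/(1-\beta r_i)$.

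It remains to verify $N(R_{i-1})/(1-\beta r_i) \geq e^{R_i - 1}$. Write $a := R_{i-1}$, $b := \beta r_i$, so $a+b = R_i > 1$. If $a \geq 1$, then $N(a) = e^{a-1}$ and the inequality reduces to $1/(1-b) \geq e^{b}$, which follows from $-\ln(1-b) = \sum_{k \geq 1} b^k/k \geq b$. If $a < 1 \leq a+b$, then $N(a) = a$, and the inequality becomes $a/(1-b) \geq e^{a+b-1}$. Defining $g(b) := \ln a - \ln(1-b) - (a + b - 1)$, one computes $g'(b) = b/(1-b) \geq 0$ and $g(1-a) = \ln a - \ln a - a - (1-a) + 1 = 0$, so $g(b) \geq 0$ for all $b \geq 1 - a$, which is exactly the relevant regime.

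The hard part will be the mixed transition case $R_{i-1} < 1 \leq R_i$: in that regime the inductive hypothesis by itself only yields the weak bound $T_j \geq 1$ for indices $j < i$, and no single multiplicative step of the form $T_i \geq T_{i-1}/(1-\beta r_i)$ can bridge from $T_{i-1} \approx 1$ up to the target $e^{R_i - 1} > 1$. The resolution is to exploit the cumulative nature of the constraint, reading $\beta \sum_{j < i} r_j M(R_j)$ as a right-Riemann sum bounding $\int_0^{R_{i-1}} M$; this aggregates just enough mass from the pre-threshold indices to cross $R = 1$, and the explicit inequality $a/(1-b) \geq e^{a+b-1}$ (which degenerates to equality precisely at $a+b = 1$) then delivers the exact exponential slack required to propagate the bound across the threshold.
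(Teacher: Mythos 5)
Your proof is correct. Since the paper only cites this lemma (Lemma 4 of Carmon et al.\ 2022) and gives no proof of its own, there is no in-paper argument to compare against; I verified your argument from scratch. The normalization $T_i = B_i/B_1$, the strong induction, the split $T_i(1-\beta r_i) \geq \beta\sum_{j<i}T_j r_j$, the right-Riemann comparison $\sum_{j<i}(R_j-R_{j-1})M(R_j) \geq \int_0^{R_{i-1}}M$, the explicit antiderivative $N(x)=x$ for $x\le1$ and $N(x)=e^{x-1}$ for $x>1$, and the two closing inequalities ($1/(1-b)\geq e^b$ when $a\geq1$; $a/(1-b)\geq e^{a+b-1}$ via monotonicity of $g(b)=\ln a-\ln(1-b)-(a+b-1)$ with $g(1-a)=0$ when $a<1\leq a+b$) all check out, and the edge conditions (that $\beta r_i<1$ for $i>1$, that $a>0$ so $\ln a$ is defined, that the base case forces $R_1\leq1$) are all covered.

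Two small comparative observations. First, a more ``standard''-looking route for lemmas of this type sets $S_i=\sum_{j\leq i}B_jr_j$, derives $S_{i-1}\leq(1-\beta r_i)S_i$, telescopes $\log S_i$, and then combines with $\beta S_j\geq B_1 v_j$ (where $v_j=\beta\sum_{l\leq j}r_l$) via non-decreasingness; however, naively choosing a single pivot index $j$ to start the telescoping from loses a constant at the threshold crossing $v_j\approx1$, because $ve^{-v}$ attains its maximum $e^{-1}$ only at $v=1$ and the discrete sequence $v_j$ may skip over $1$. Your integral comparison \emph{is} the fix for exactly that issue: by aggregating the contributions of all pre-threshold indices rather than picking one, $N(R_{i-1})$ interpolates smoothly across $R=1$, and the final inequality degenerates to equality precisely at $a+b=1$, which is why no constant is lost. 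Second, a cosmetic point: in the case $a<1$ you could avoid the calculus on $g$ entirely by writing $a/(1-b)\geq a\,e^b$ (from $1/(1-b)\geq e^b$) and then $a\,e^b\geq e^{a-1}\,e^b=e^{a+b-1}$ would require $a\geq e^{a-1}$, which is \emph{false} for $a<1$ --- so the direct argument you give via $g$ is in fact necessary, not an overcomplication. No gaps.
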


\begin{proof}[Proof of Theorem~\ref{thm:BallOracle}]

 From Lemma~\ref{lem:HelpBall}, for $\lambda_t =\frac{\|\nabla\ff(\xx_{t+1})\|_{p/(p-1)} }{r^{p-1}}$, Lemma~\ref{lem:Potential} holds. Similar to the proof of Lemma~\ref{lem:Continuity} we can show that $\lambda_t$'s exist. Further, using Lemma~\ref{lem:Potential} and the fact that $\|\xx_{t+1}-\yy_t\|_p = r$ (Lemma~\ref{lem:HelpBall}), implies that for all $T \geq 1$
\begin{equation}\label{eq:PotHelpInf}
A_{T}\left(\ff(\xx_{T})-\ff(\xx^{\star})\right) + \omega_p(\xx^{\star},\zz_{T}) \leq A_{0}\left(\ff(\xx_{0})-\ff(\xx^{\star})\right) + \omega_p(\xx^{\star},\zz_{0}) - r^p\sum_{t \in [T]} A_t \lambda_t.
\end{equation}
Since $\ff(\xx_T) - \ff(\xx^{\star}) \geq 0$ and $\omega_p(\xx^{\star}, \zz_t) \geq 0$ for all $t$. This implies that 
\[
r^p\sum_{t \in [T]} A_t \lambda_t \leq  \omega_p(\xx^{\star},\zz_{0})
= \Psi_0
\]
Since, $A_t^{1/p} \geq  \sum_{t \in [T]} \frac{1}{5 p \lambda_t^{1/p}}$  by Lemma~\ref{lem:ARelations}, applying Holder's inequality for some $\alpha $ yields that
\begin{align*}
\sum_{t \in [T]} A_t^{\frac{1}{1 + \alpha}} 
	&= \sum_{t \in [T]} A_t^{\frac{1}{1 + \alpha}}  \lambda_t^{\frac{1}{p(1 + \alpha^{-1})}} \lambda_t^{-\frac{1}{p(1 + \alpha^{-1})}}
	\leq \left(
		\sum_{t \in [T]} A_t \lambda_t^{\alpha/p}
	  \right)^{(1 + \alpha)^{-1}}
	   \left(
	  \sum_{t \in [T]} \lambda_t^{- \frac{1}{p}}
	  \right)^{(1 + \alpha^{-1})^{-1}} \\
	 &\leq
	\left( r^{-p}\Psi_0\right)^{\frac{1}{1 + \alpha}}
	 \left(5p A_T^{1/p}\right)^{\frac{\alpha}{1 + \alpha}}
\end{align*}
where we used that $\frac{\alpha}{1 + \alpha} = \frac{1}{1 + \alpha^{-1}}$ and $(1 + \alpha)^{-1} + (1 + \alpha^{-1})^{-1} = 1$.

We now use the above to give a lower bound on $A_T$. Using Lemma~\ref{lem:LBSeqAInf}, for $\alpha = p$,  $B_j = A_t^{\frac{1}{1+\alpha}}$, $r_j = 1$, $\beta = \left((5p)^{\alpha}r^{-p}\Psi_0\right)^{-1/(1+\alpha)}$,
\[
A_T^{\frac{1}{1+\alpha}} \geq \exp\left(\left((5p)^{\alpha}r^{-p}\Psi_0\right)^{-1/(1+\alpha)}\cdot T -1 \right).
\]
Therefore, when $T = O\left(p^{p/(p+1)} \left(\frac{\Psi_0^{1/p}}{r}\right)^{\frac{p}{p+1}}\log \frac{\Psi_0}{\epsilon}\right)$ 
\[
A_T \geq \frac{\Psi_0}{\epsilon}.
\]
Further, using Eq.~\eqref{eq:PotHelpInf}
\[
f(\xx_T) - \ff(\xx^{\star} ) \leq \frac{\Psi_0}{A_T} \leq  \epsilon.
\]
Additionally noting that $\Psi_0 = \|\xx^{\star}-\xx_0\|_p^p$ we get that in $T = O\left(p^{p/(p+1)} \left(\frac{\|\xx_0-\xx^{\star}\|_p}{r}\right)^{\frac{p}{p+1}}\log \frac{\Psi_0}{\epsilon}\right)$ iterations, we obtain an $\epsilon$-approximate solution.

\end{proof}

\section{Proofs for Line-Search Free Method}\label{app:LSF}

\subsection{Reductions from the \texorpdfstring{$\sigmaGMS$}{TEXT} Oracle}

We first show a reduction to a general $\sigmaMS$ oracle. This is the same as the MS oracles from~\cite{monteiro2013accelerated,carmon2022optimal} when $p=2$. 

\begin{definition}
    An oracle $\Oracle : \mathbb{R}^d \rightarrow \R^d \times \R_+$ is a $\sigmaMS$ oracle for a function $f : \R^d \rightarrow \R$ if, for every $\cc \in \R^d$, $(\xx, \gamma) = \Oracle(\cc)$
    satisfies
    \begin{equation*}
        \norm{\gamma\diag{|\xx-\cc|^{p-2}}(\xx - \cc) + \nabla f(\xx))}_{p^*} \leq \sigma \gamma\norm{\xx - \cc}^{p-1}_p.
    \end{equation*}
    Additionally, we say $\Oracle$ satisfies an $(s,\mu)$ movement bound if 
    \begin{equation*}
        \norm{\xx -  \cc}_p  \geq \begin{cases}
            (\gamma/\mu^s)^{1/(s-p)} & \text{ if $s<\infty$}\\
            1/\mu & \text{ if $s = \infty$}.
        \end{cases}
    \end{equation*}
\end{definition}

We now show how the $\sigmaMS$ oracle is related to the $\sigmaGMS$ oracle.
\begin{lemma}\label{lem:MS-oracle}
    If $(\xx, \gamma) = \sigmaMS(y)$, then 
    \begin{align}
        \langle\nabla \ff(\xx),\xx -\yy  \rangle
        \leq -\gamma(1-\sigma)\norm{\xx -\yy}_p^p, \text{ and } \norm{\nabla \ff(\xx)}_{p^*} \leq (1+\sigma) \gamma \norm{\xx-\yy}_p^{p-1}. 
        \end{align}
\end{lemma}
\begin{proof}
    Observe that
    \begin{align*}
        \langle&\nabla \ff(\xx),\xx -\yy  \rangle\\
        &= \langle\nabla \ff(\xx) + \gamma\diag{|\xx-\cc|^{p-2}}(\xx -\yy) - \gamma\diag{|\xx-\cc|^{p-2}}(\xx -\yy),\xx -\yy \rangle\\
        & \leq \langle\nabla \ff(\xx) + \gamma\diag{|\xx-\cc|^{p-2}}(\xx -\yy), \xx -\yy \rangle - \gamma\|\xx-\yy\|_p^p\\
        &\leq \norm{\nabla \ff(\xx) + \gamma\diag{|\xx-\cc|^{p-2}}(\xx -\yy)}_{p^*}\norm{\xx -\yy}_p - \gamma\|\xx-\yy\|_p^p\\
        &\leq \sigma\gamma\norm{\xx -\yy}_p^p -  \gamma\|\xx-\yy\|_p^p,
        \end{align*}
        as required. The last inequality above follows from the definition of a $\sigma$-$\textrm{MS}_p$ oracle. We now show the bound on $\|\nabla\ff(\xx)\|_{p^*}$.
        \begin{align*}
            \|\nabla\ff(\xx)\|_{p^*}& = \|\nabla\ff(\xx) + \gamma\diag{|\xx-\cc|^{p-2}}(\xx - \cc) - \gamma\diag{|\xx-\cc|^{p-2}}(\xx - \cc) +\|_{p^*}\\
            & \leq \|\nabla\ff(\xx) + \gamma\diag{|\xx-\cc|^{p-2}}(\xx - \cc)\|_{p^*} + \gamma\|\diag{|\xx-\cc|^{p-2}}(\xx - \cc) +\|_{p^*}\\
            & \leq \sigma \gamma \|\xx-\cc\|_p^{p-1} + \gamma \|\xx-\cc\|_p^{p-1},
        \end{align*}
        where the last step follows from the definition of $p^*$ and the $\sigmaMS$ oracle.
\end{proof}

We now prove that the output of an $\ell_p^s$ oracle is a $0$-$\textrm{GMS}_p$ oracle.
\begin{lemma}
    An oracle that returns $(\xx, \gamma)$ with input $\cc\in \rea^d, \ff:\rea^d\rightarrow \rea$, and 
    \[
    \xx = \text{ the output of a $\ell_p^s(\lambda)$-proximal oracle for function $f$ queried at $\cc$ and,}
    \gamma = \begin{cases}
    \lambda s\|\xx-\cc\|_p^{s-p} & \text{ if $s<\infty$}\\
    \frac{\|\nabla\ff(\xx)\|_{p^*}}{r^{p-1}} & \text{ if $s = \infty$},
    \end{cases}
    \]
    is a $0$-$\textrm{GMS}_p$ oracle that satisfies a $(s,\min\{1/r,(\lambda s)^{1/s}\})$ movement bound.
\end{lemma}
\begin{proof}
Let $\xx$ be an output of the oracle with input $\cc,\ff$. From the optimality conditions for $s<\infty$,
\[
\nabla\ff(\xx) = -\lambda s \|\xx-\cc\|_p^{s-p} \diag{|\xx-\cc|^{p-2}}(\xx-\cc),
\]
and for $s = \infty$ (from Lemma~\ref{lem:HelpBall}),
\[
\nabla \ff(\xx) = -\frac{\|\nabla\ff(\xx)\|_{p^*}}{r^{p-1}}\diag{|\xx-\cc|^{p-2}}(\xx-\cc).
\]
Therefore, in both cases, $\langle \nabla \ff(\xx),\xx-\cc\rangle = -\gamma \|\xx-\cc\|_p^p$. Furthermore, in both cases by a simple computation we see that $\|\nabla\ff(\xx)\|_{p*} = \gamma \|\xx-\cc\|_p^{p-1},$ as required. We next show the movement bounds. For $s=\infty$, from Lemma~\ref{lem:HelpBall} $\|\xx-\cc\|_p = r$, and for $s<\infty$,
\[
\gamma^{\frac{1}{s-p}}\mu^{-\frac{s}{s-p}} = (\lambda s)^{\frac{1}{s-p}}\mu^{-\frac{s}{s-p}} \|\xx-\cc\|_p.
\]
The bound now follows from requiring $\mu \geq 1/r$ and $\mu\geq (\lambda s)^{1/s}$.
\end{proof}

With the above result on $\ell_p^s(\lambda)$-oracles in hand, Theorems~\ref{thm:ConceptualProx-main} and~\ref{thm:Ball-main} follow as corollaries of Theorems~\ref{lem:finite-s-lemma} and~\ref{lem:Inf-s-lemma} respectively.

\subsection{Proof of Lemma~\ref{lem:Potential2}}
\begin{proof}
Observe,
\begin{align*}
   & A_{t+1}' \left(\ff(\xx_{t+1}')-\ff(\xx^{\star})\right) - A_{t}\left(\ff(\xx_{t})-\ff(\xx^{\star})\right) \\
   &= a_{t+1}\left(\ff(\xx_{t+1}')-\ff(\xx^{\star})\right) + A_t \left(\ff(\xx_{t+1}')-\ff(\xx_t)\right)\\
   &\leq  a_{t+1}\langle \nabla \ff(\xx_{t+1}'),\xx_{t+1}'-\xx^{\star}\rangle + A_t\langle \nabla \ff(\xx_{t+1}'),\xx_{t+1}'-\xx_t\rangle &&\text{(convexity of $\ff$)}\\
   &= \langle \nabla \ff(\xx_{t+1}'),A_{t+1}'\xx_{t+1}' - A_t\xx_t - a_{t+1}\xx^{\star}\rangle\\
   &= \langle \nabla \ff(\xx_{t+1}'),A_{t+1}'\xx_{t+1}' +a_{t+1}\zz_t-A_{t+1}'\yy_t  - a_{t+1}\xx^{\star}\rangle &&\text{(Since $A_{t+1} y_t = A_t x_t + a_{t+1} z_t$)}\\
   &= A_{t+1}'{\langle\nabla \ff(\xx_{t+1}'),\xx_{t+1}' -\yy_t  \rangle} + a_{t+1} {\langle \nabla \ff(\xx_{t+1}'),\zz_{t}  - \xx^{\star}\rangle} 
\end{align*}
Using the KKT conditions for $z_t$, i.e., differentiating the equation defining $z_t$ in the algorithm w.r.t $z$, we have
\[
\sum_{i=1}^t a_i \beta_i \nabla \ff(\xx_i') = - \nabla \omega_p(\zz_t,\yy_0), \quad \forall t.
\]
Therefore, 
\[
a_{t+1} \beta_{t+1} \nabla\ff(\xx_{t+1}') = \nabla \omega_p(\zz_t,\yy_0) - \nabla \omega_p(\zz_{t+1},\yy_0)
\]
This implies
\begin{align*}
a_{t+1} \beta_{t+1} \langle \nabla \ff(\xx_{t+1}'),\zz_{t+1}  - \xx^{\star}\rangle &= \langle \nabla \omega_p (\zz_t, \yy_0) - \nabla \omega_p(\zz_{t+1}, \yy_0), \zz_{t+1} - \xx^\star \rangle \\
&= \omega_p(\xx^{\star},\zz_{t}) - \omega_p(\xx^{\star},\zz_{t+1}) - \omega_p(\zz_{t},\zz_{t+1}) \\
&\leq \omega_p(\xx^{\star},\zz_{t}) - \omega_p(\xx^{\star},\zz_{t+1}) - \frac{1}{2^{p+1}} \norm{\zz_{t+1} - \zz_t}_p^p
\end{align*}
where the second equality is the Bregman three-point property (Lemma~\ref{lem:3Point}) and the inequality uses Lemma \ref{lem:PrecAKPS}. We additionally bound

\begin{align*}
a_{t+1} \langle \nabla \ff(\xx_{t+1}'),\zz_{t+1}  - \zz_t \rangle &\leq a_{t+1} \norm{\nabla \ff(\xx_{t+1}')}_{p^*} \norm{\zz_{t+1} - \zz_t}_p
\end{align*}
by H{\"o}lder's inequality. Combining these inequalities yields

\begin{align*}
a_{t+1} \langle \nabla \ff(\xx_{t+1}'),\zz_{t}  - \xx^\star \rangle &\leq \beta_{t+1}^{-1}(\omega_p(\xx^{\star},\zz_{t}) - \omega_p(\xx^{\star},\zz_{t+1}) - \frac{1}{2^{p+1}} \norm{\zz_{t+1} - \zz_t}_p^p)\\
&+  a_{t+1} \norm{\nabla \ff(\xx_{t+1}')}_{p^*} \norm{\zz_{t+1} - \zz_t}_p \\
&\leq  \beta_{t+1}^{-1} (\omega_p(\xx^{\star},\zz_{t}) - \omega_p(\xx^{\star},\zz_{t+1})) + \frac{p-1}{p} (2a_{t+1} \beta_{t+1}^{1/p})^{p^*} \norm{\nabla \ff(\xx_{t+1}')}_{p^*}^{p^*}\\
&+ \frac{1}{\beta_{t+1} p 2^p} \norm{\zz_{t+1} - \zz_t}_p^p - \frac{1}{\beta_{t+1} 2^{p+1}} \norm{\zz_{t+1} - \zz_t}_p^p \\
&\leq  \beta_{t+1}^{-1} (\omega_p(\xx^{\star},\zz_{t}) - \omega_p(\xx^{\star},\zz_{t+1}) ) + (2a_{t+1} \beta_{t+1}^{1/p})^{p^*} \norm{\nabla \ff(\xx_{t+1}')}_{p^*}^{p^*}
\end{align*}
where the second inequality uses Young's inequality. 
Finally, since $\xx'_{t+1}$ is returned by $\sigmaGMS(\yy_t)$, we have
\begin{equation}\label{eq:HelpNLS}
\langle\nabla \ff(\xx_{t+1}'),\xx_{t+1}' -\yy_t  \rangle \leq  - (1-\sigma)\lambda_{t+1}\norm{\xx_{t+1}' - \yy_t}_p^p
\end{equation}
and 
    \begin{equation}
        \norm{\nabla \ff(\xx_{t+1}')}_{p^*} \leq (1+\sigma) \lambda_{t+1} \norm{\xx_{t+1}'-\yy_t}_p^{p-1}.
    \end{equation}
Putting everything together, we obtain
\begin{align*}
&A_{t+1}' \left(\ff(\xx_{t+1}')-\ff(\xx^{\star})\right) - A_{t}\left(\ff(\xx_{t})-\ff(\xx^{\star})\right) \\
&\leq A_{t+1}'{\langle\nabla \ff(\xx_{t+1}'),\xx_{t+1}' -\yy_t  \rangle} + a_{t+1} {\langle \nabla \ff(\xx_{t+1}'),\zz_{t}  - \xx^{\star}\rangle} \\
&\leq - A_{t+1}'(1-\sigma) \lambda_{t+1} \norm{\xx_{t+1}' - \yy_t}_p^p + (2a_{t+1} \beta_{t+1}^{1/p})^{p^*} \norm{\nabla \ff(\xx_{t+1}')}_{p^*}^{p^*} + \beta_{t+1}^{-1} \left( \omega_p(\xx^{\star},\zz_{t}) - \omega_p(\xx^{\star},\zz_{t+1}) \right) \\
&\leq \left(((2+2\sigma) a_{t+1} \lambda_{t+1} \beta_{t+1}^{1/p})^{p^*} - A_{t+1}' (1-\sigma) \lambda_{t+1} \right) \norm{\xx_{t+1}' - \yy_t}_p^p + \beta_{t+1}^{-1} \left( \omega_p(\xx^{\star},\zz_{t}) - \omega_p(\xx^{\star},\zz_{t+1}) \right).
\end{align*}

We now consider the cases where $\beta_{t+1} = 1$ and $\beta_{t+1} < 1$ separately. If $\beta_{t+1} = 1$ we must have $\lambda_{t+1} \leq \lbar$, $A_{t+1} = A_{t+1}'$, and $\xx_{t+1} = \xx_{t+1}'$. The above therefore yields 

\begin{align*}
&A_{t+1} \left(\ff(\xx_{t+1})-\ff(\xx^{\star})\right) - A_{t}\left(\ff(\xx_{t})-\ff(\xx^{\star})\right) \\
&\leq \lambda_{t+1} \left(((2 + 2\sigma) a_{t+1})^{p^*} \lambda_{t+1}^{\frac{1}{p-1}} - A_{t+1} (1-\sigma) \right) \norm{\xx_{t+1} - \yy_t}_p^p + \omega_p(\xx^{\star},\zz_{t}) - \omega_p(\xx^{\star},\zz_{t+1}).
\end{align*}
Now by the definition of $A_{t+1}$, we have
\[
A_{t+1} = \left(\frac{3 (1+\sigma)}{1-\sigma} a_{t+1}\right)^{p^*} \lbar^{\frac{1}{p-1}} \ge \frac{1}{1-\sigma} ((2+2\sigma) a_{t+1})^{p^*} \lambda_{t+1}^{\frac{1}{p-1}} 
\]
and so we have 
\[
A_{t+1} \left(\ff(\xx_{t+1})-\ff(\xx^{\star})\right) + \omega_p(\xx^{\star},\zz_{t+1}) \leq A_{t}\left(\ff(\xx_{t})-\ff(\xx^{\star})\right) + \omega_p(\xx^{\star},\zz_{t}).
\]

By an equivalent argument to \Cref{lem:ARelations}, we observe $A_{t+1}^{1/p} \geq A_t^{1/p} + \frac{1}{\frac{3+3\sigma}{1-\sigma} p^{1/p} \lbar^{1/p}} \geq A_t^{1/p} + \frac{1}{\frac{6+6\sigma}{1-\sigma}
\lbar^{1/p}}$ (as $p^{1/p} \leq 2$ for all $p$), which combined with the above potential bound yields one case of the lemma.

If $\beta_{t+1} < 1$, we have $\lambda_{t+1} \geq \lbar$ and $\beta_{t+1} \lambda_{t+1} = \lbar$. We observe, again using Eq.~\eqref{eq:HelpNLS},

\begin{align*}
&A_{t+1}  \left(\ff(\xx_{t+1})-\ff(\xx^{\star})\right) -  A_t \left(\ff(\xx_{t})-\ff(\xx^{\star})\right) \\
&\leq \beta_{t+1} A'_{t+1}  \left(\ff(\xx_{t+1}')-\ff(\xx^{\star})\right) + (1-\beta_{t+1}) A_t \left(\ff(\xx_{t})-\ff(\xx^{\star})\right) -  A_t \left(\ff(\xx_{t})-\ff(\xx^{\star})\right) \\
&= \beta_{t+1} A_{t+1}' \left(\ff(\xx_{t+1}')-\ff(\xx^{\star})\right) - \beta_{t+1} A_{t}\left(\ff(\xx_{t})-\ff(\xx^{\star})\right) \\
&\leq \beta_{t+1} \left(( (2+2\sigma) a_{t+1} \lambda_{t+1} \beta_{t+1}^{1/p})^{p^*} - A_{t+1}' (1-\sigma) \lambda_{t+1} \right) \norm{\xx_{t+1}' - \yy_t}_p^p +  \omega_p(\xx^{\star},\zz_{t}) - \omega_p(\xx^{\star},\zz_{t+1}) \\
&=  \left(((2+2\sigma)a_{t+1})^{p^*} \lambda_{t+1}^{p^*} \beta_{t+1}^{p^*} - A_{t+1}' (1-\sigma) \beta_{t+1} \lambda_{t+1} \right) \norm{\xx_{t+1}' - \yy_t}_p^p +  \omega_p(\xx^{\star},\zz_{t}) - \omega_p(\xx^{\star},\zz_{t+1}) \\
&= \left(((2+2\sigma) a_{t+1})^{p^*} \lbar^{p^*} - A_{t+1}' (1-\sigma) \lbar \right) \norm{\xx_{t+1}' - \yy_t}_p^p +  \omega_p(\xx^{\star},\zz_{t}) - \omega_p(\xx^{\star},\zz_{t+1})
\end{align*}
where the last equality uses the definition of $\beta_{t+1}$ and the first inequality uses convexity of $f$, the definition of $x_{t+1}$, and that $A_{t+1} = \beta_{t+1} A_{t+1}' + (1-\beta_{t+1}) A_t = A_t + \beta_{t+1} a_{t+1}$. The result now follows from $A_{t+1}' = (\frac{3+3\sigma}{1-\sigma} a_{t+1})^{p^*} \lbar^{\frac{1}{p-1}}$.
\end{proof}
\subsection{Proof of Lemma~\ref{lem:finite-s-lemma}}

\begin{proof}
We first observe that 
\[
\ff(\xx_{T+1}) - \ff(\xx^\star)  \leq \frac{\Psi_0}{A_{T+1} }.
\]
We will thus bound the right-hand side expression. We bound the number of iterations required for $A_t$ to increase by a factor of $2$. Let $T_1 \geq 1$ be given and let $T_2$ be the smallest value where $A_{T_2} \geq 2 A_{T_1}$. We will categorize these iterations by whether $\lbar\geq\lambda_{t+1}$ or $\lbar< \lambda_{t+1}$.  Let $S$, $L$ be the sets of iterations where $\lbar\geq\lambda_{t+1}$ and  $\lbar< \lambda_{t+1}$ respectively and note that $S \cup L = [T_1, T_2]$.

    Summing the bound of Lemma~\ref{lem:Potential2} over all iterations and recalling $A_0 = 0$ gives
\[
A_{T+1} (\ff(\xx_{T+1}) - \ff(\xx^\star)) + \omega_p(\xx^\star, \zz_{t+1}) \leq  \omega_p(\xx^\star, \xx_0) - \sum_{t \in L} \frac{1-\sigma}{3} A_{t+1}' \lbar \|\xx'_{t+1}-\yy_t\|_p^p.
\]
Since the oracle satisfies the $(s,\mu)$ movement bounds for all $t$, $\|\xx'_{t+1}-\yy_t\|_p^p \geq \lambda_{t+1}^{\frac{p}{s-p}}\mu^{-\frac{sp}{s-p}}$. Further since for all $t\in L$, $\lambda_{t+1}>\lbar$, the above becomes, 
\[
\|\xx'_{t+1}-\yy_t\|_p^p \geq \lbar^{\frac{p}{s-p}}\mu^{-\frac{sp}{s-p}}.
\]
Since $\ff(\xx_T) - \ff(\xx^{\star}) \geq 0$ and $\omega_p(\xx^{\star}, \zz_t) \geq 0$ for all $t$. This implies that 
\[
\frac{1-\sigma}{3} \mu^{-\frac{sp}{s-p}}\sum_{t \in L} A_{t+1}' \lbar^{\frac{s}{s- p}} \leq  \omega_p(\xx^{\star},\zz_{0})
= \Psi_0
\]
We now observe, by our choice in Algorithm~\ref{alg:LSFProx}, that $\bar{\lambda}_{t+1} = \bar{\lambda} = A_{T_1}^{-\frac{(s-p)(p+1)}{ps-p+s}} \Psi_0^{\frac{p(s-p)}{ps-p+s}} \mu^{\frac{sp^2}{s-p+ps}}$ for all $t \in [T_1,T_2]$. Since $A_{t+1}' \geq A_{T_1}$, this implies

\[
\frac{1-\sigma}{3} \mu^{-\frac{sp}{s-p}} A_{T_1} \bar{\lambda}^{\frac{s}{s- p}} \cdot  |L| \leq \Psi_0,
\]
or,
\[
|L| \leq \frac{3}{1-\sigma} \mu^{\frac{sp}{s-p}}\bar{\lambda}^{-\frac{s}{s- p}}\frac{\Psi_0}{A_{T_1}} .
\]
We will now bound the size of set $S$. Since for every $t \in S$ we have $A_{t+1}^{1/p} \geq A_t^{1/p} + \frac{1}{\frac{6+6\sigma}{1-\sigma} \lbar^{1/p}} = A_t^{1/p} + \frac{1}{\frac{6+6\sigma}{1-\sigma} \bar{\lambda}^{1/p}}$, we conclude
\[
2^{1/p}A_{T_1}^{1/p} \geq A_{T_2-1}^{1/p} \geq A_{T_1}^{1/p} + \frac{|S|}{\frac{6+6\sigma}{1-\sigma}  \bar{\lambda}^{1/p}} \Rightarrow |S| < \frac{6+6\sigma}{1-\sigma}  (A_{T_1} \bar{\lambda})^{1/p}.
\]
Summing these gives 
\[
T_2 - T_1 = |S| +|L| \leq  \frac{6+6\sigma}{1-\sigma}  (A_{T_1} \bar{\lambda})^{1/p} + \frac{3}{1-\sigma} \mu^{\frac{sp}{s-p}}\frac{\Psi_0 }{A_{T_1} \bar{\lambda}^{\frac{s}{s-p}}} .
\]
By our choice of $\bar{\lambda}$, we have
\[
T_2 - T_1 \leq \frac{24}{1-\sigma} A_{T_1}^\frac{p}{ps-p+s} \Psi_0^{\frac{s-p}{ps-p+s}} \mu^{\frac{sp}{s-p+ps}}.
\]
We now sum this bound over all phases encountered in the $T$ iterations. The sum over phases is the sum of a geometric series with initial term $ O\left( A_{1}^\frac{p}{ps-p+s} \Psi_0^{\frac{s-p}{ps-p+s}} \mu^{\frac{sp}{s-p+ps}}\right)$ and common ratio $2^\frac{p}{ps-p+s}$. We now use the following property of a geometric series with $n$ terms, starting term $a$ and common ratio $r$:
\[
\sum_{i=0}^{n-1} ar^i = a\left(\frac{r^{n}-1}{r-1}\right) \leq ar^{n-1} \left(\frac{r}{r-1}\right).
\]
Note that $ar^{n-1}$ is the last term of the series. Applying this to our series, since $\frac{r}{r-1} = O(1)$, we get, 
\[
T \leq O\left( A_{T+1}^\frac{p}{ps-p+s} \Psi_0^{\frac{s-p}{ps-p+s}} \mu^{\frac{sp}{s-p+ps}}\right). 
\]
which implies
\[
\Psi_0 A_{T+1}^{-1} \leq O \left( \frac{\mu^{s} \Psi_0^{s/p}}{T^{(ps-p+s)/p}} \right).
\]
The result follows with the observation that $\frac{ps-p+s}{p} = s (1 + \nu)$ for $\nu = \frac{1}{p} - \frac{1}{s}$.

In addition to the cost of querying the $\sigmaGMS$ oracle, the remaining computational costs are bounded by the costs to update $y_t$, $x_{t+1}$, and $z_{t+1}$ (which can be expressed in closed-form as a per-coordinate scaling of the (weighted) accumulation of previous gradients), which are $O(d)$. In addition, $\delta$-approximately finding a positive root of the polynomial $m(a)=\lbar^{1/p-1} (\frac{3+3\sigma}{1-\sigma} a)^{p/(p-1)} - a - A_t = 0$ (i.e., $\tilde{a}$ such that $|\tilde{a} - a_{\mathrm{root}}| \leq \delta$), is bounded, for $u = (\frac{1-\sigma}{3+3\sigma})\frac{A_t^{(p-1)/p}}{\lbar^{1/p}} + \lbar^{-(p-1)}(\frac{3+3\sigma}{1-\sigma})^{-p}$, by $O(\log(u/\delta))$, since $m(0) = -A_t \leq 0$ and $m(u) \geq 0$, though we note that this cost is dominated by $O(d)$.
\end{proof}

\subsection{Proof of Lemma~\ref{lem:Inf-s-lemma}}

\begin{proof}
  We follow the proof of the previous lemma. Define $S$ and $L$ as in the proof of Lemma~\ref{lem:finite-s-lemma}. Again, summing the bound of \Cref{lem:Potential2} over all iterations and recalling $A_0 = 0$ gives
\[
A_{T+1} (\ff(\xx_{T+1}) - \ff(\xx^\star)) + \omega_p(\xx^\star, \zz_{t+1}) \leq  \omega_p(\xx^\star, \xx_0) - \frac{1-\sigma}{3}  \sum_{t \in L}  A_{t+1}' \lbar\|\xx'_{t+1}-\yy_t\|_p^p.
\]
Using the fact that the oracle satisfies the $(\infty,\mu)$ movement bounds, we know that $\|\xx'_{t+1}-\yy_t\|_p^p \geq 1/\mu^p$ for all t.

Since $\ff(\xx_T) - \ff(\xx^{\star}) \geq 0$ and $\omega_p(\xx^{\star}, \zz_t) \geq 0$ for all $t$. This implies that 
\[
\frac{1-\sigma}{3} \mu^{-p} \sum_{t \in L} A_{t+1}' \lbar \leq  \omega_p(\xx^{\star},\zz_{0})
= \Psi_0
\]
We now choose $\bar{\lambda}_t = \bar{\lambda}$ for all $t \in [T_1,T_2]$. We will choose this value $\bar{\lambda}$ in the end. Since $A_{t+1}' \geq A_{T_1}$, this implies

\[
|L| \leq \frac{3}{1-\sigma} \frac{\mu^p\Psi_0}{\bar{\lambda} A_{T_1}}.
\]
The bound on $|S|$ is identical to Lemma~\ref{lem:finite-s-lemma}: 
\[
 |S| < \frac{6+6\sigma}{1-\sigma} (A_{T_1} \bar{\lambda})^{1/p}.
\]
Summing these gives 
\[
T_2 - T_1 = |S| +|L| \leq  \frac{6+6\sigma}{1-\sigma} (A_{T_1} \bar{\lambda})^{1/p} + \frac{3}{1-\sigma} \frac{\mu^p \Psi_0}{\bar{\lambda}A_{T_1}} .
\]
Setting 
\[
\bar{\lambda} = \frac{\mu^{\frac{p^2}{p+1}}\Psi_0^{\frac{p}{p+1}}}{A_{T_1} },
\]
for these iterations yields 
\[
T = T_2 - T_1 \leq \frac{24}{1-\sigma} \mu^{\frac{p}{p+1}} \Psi_0^\frac{1}{p+1}.
\]
Now, in every $T$ iterations, the value of $A_{T}$ doubles. Therefore, we need a total of 
\[
T = O\left(  \mu^{\frac{p}{p+1}} \Psi_0^\frac{1}{p+1} \log\frac{\Psi_0}{\epsilon A_0}\right)
\]
iterations to get $f(\xx_T) - \ff(\xx^{\star})\leq \frac{\Psi_0}{A_T} \leq \epsilon$.
\end{proof}

\section{Proofs for Applications}\label{app:app}

\subsection{Proof of Theorem~\ref{thm:regression-main}}

Leveraging our framework from Section~\ref{sec:NoLS} for the $\ell_s$-regression problem, we get as an immediate corollary of Theorem~\ref{thm:ConceptualProx-main},
\begin{corollary}\label{lem:ProxSNorm}
   Algorithm~\ref{alg:LSFProx} applied to 
   $\ff(\xx) =  \|\AA\xx-\bb\|_s^s$ finds $\xxtil$ such that for all $k$, and $\xx^{\star}:=\arg\min \ff(\xx)$
   \[
   \ff(\xxtil)-\ff(\xx^{\star})\leq \frac{s\|\AA\xx^{\star}-\AA\xx_0\|_p^s}{k^{\frac{s(p+1)-p}{p}}}
   \]
   Each iteration $k$ involves solving a proximal subproblem of the form,
   \[
   \min_{x} f(x) + \lambda_k \|A(x-c_k)\|_p^s,
    \] for given constant $\lambda_k$ and vector $c_k$. 
\end{corollary}

We would use this corollary as the basis to prove our result. We will prove a bound on $\|\AA\xx^{\star}-\AA\xx_0\|_p^s$ and show how to solve every proximal subproblem efficiently using $\tilde{O}(1)$ smoothed $\ell_p$-regression problems. We begin by proving a bound on $\|\AA\xx^{\star}-\AA\xx_0\|_p^s$.

\begin{lemma}\label{lem:RelateDivergenceFunc}
For any $\AA\in \mathbb{R}^{n\times d}$, $2\leq p\leq s$, let $\ff(\xx) = \|\AA\xx-\bb\|_s^s$ and $\xx^{\star}= \arg\min_{\xx} \ff(\xx)$. Then,
\[
\norm{A (x - x^*)}_p \leq 2^{1+1/s} n^{\nu}\cdot \left(f(x) - f(x^*) \right)^{1/s}.
\]
\end{lemma}
\begin{proof}
    Let $x\in \mathbb{R}^d$ be fixed and let $\calE :=\ff(\xx) - \ff(\xx^{\star})$. We first note from Lemma~\ref{lem:PrecAKPS},
    \[
    \ff(\xx) - \ff(\xx^{\star}) - \langle\nabla \ff(\xx^{\star}),\xx-\xx^{\star}\rangle \geq \frac{1}{2^{s+1}}\|\AA(\xx-\xx^{\star})\|_s^s,
    \]
    and since $\nabla \ff(\xx^{\star}) = 0$,
    \[
    \|\AA(\xx-\xx^{\star})\|_s\leq 2^{1+1/s}\left( \ff(\xx) - \ff(\xx^{\star})\right)^{1/s}= 2^{1+1/s}\calE^{1/s}.
    \]
   Using the relation between norms, i.e., for $s\geq p,$ and any $z\in \mathbb{R}^n$, $\|z\|_s\leq \|z\|_p\leq n^{\frac{1}{p}-\frac{1}{s}} \|z\|_s$ we can further bound,
    \begin{equation*}\label{eq:boundpNorm}
    \|\AA(\xx-\xx^{\star})\|_p \leq n^{\frac{1}{p}-\frac{1}{s}}\cdot 2^{1+1/s}\calE^{1/s},
    \end{equation*}
    as required.
\end{proof}

\paragraph{Solving proximal problems efficiently.}

We require solving proximal problems of the form,
\[
\min_{\xx} \|\AA\xx-\bb\|_s^s+ \lambda\|\AA(\xx-\xx_t)\|_{p}^s
\]
We will prove the following.
\begin{lemma}\label{lem:proxSubProb} There is an algorithm that can solve the $\ell_p^s(\lambda)$-proximal point problem using $\Otil(1)$ problems of the form 
\[\min_{\xx} \dd^{\top}\xx + \|\xx-\xx_t\|_{\nabla^2\ff(\xx_t)}^2 + O(\lambda_t)\|\AA(\xx-\xx_t)\|_p^p
\]
\end{lemma}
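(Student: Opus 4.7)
The plan is to reduce the proximal subproblem to $\Otil(1)$ queries of the target form by combining a local expansion of the $\ell_s^s$ loss around $\xx_t$ with a binary search that converts the $\ell_p^s$ regularizer into an $\ell_p^p$ regularizer. Two obstacles must be overcome: the subproblem still contains a global $\ell_s^s$ loss rather than just a quadratic, and the regularizer has power $s$ rather than $p$.

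First, I would Taylor-expand the $\ell_s^s$ part of $\ff$ around $\xx_t$: the linear term contributes to the $\dd^\top\xx$ coefficient, the quadratic term combines with the pre-existing $\|\RRtil^{1/2}\AA\xx\|_2^2$ to reconstruct $\|\xx-\xx_t\|_{\nabla^2\ff(\xx_t)}^2$, and the higher-order remainder is bounded, using $p\leq s$, by $O(\|\AA(\xx-\xx_t)\|_p^s)$. Treating this local quadratic approximation as an iterative-refinement step centered at $\xx_t$, the framework of Lemma~\ref{lem:IterRef} (adapted to our regularized objective) reduces the proximal subproblem to $\Otil(1)$ residual solves, each of which it suffices to solve to a constant multiplicative accuracy.

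Second, to convert the $\ell_p^s$ regularizer into the $\ell_p^p$ regularizer of the target form, I would binary-search for a scalar $\lambda_t$ so that $\lambda_t = \lambda\|\AA(\xx^\star_{\lambda_t}-\xx_t)\|_p^{s-p}$ at the minimizer $\xx^\star_{\lambda_t}$ of the surrogate with $\ell_p^p$ coefficient $\lambda_t$. Existence of such a fixed point follows from a continuity argument in the spirit of Lemma~\ref{lem:Continuity}, and the binary search locates it to sufficient accuracy in $\Otil(1)$ oracle calls. At the fixed point $\lambda\|\AA(\xx-\xx_t)\|_p^s$ matches $\lambda_t\|\AA(\xx-\xx_t)\|_p^p$, and the leftover Taylor remainder, bounded by $\|\AA(\xx-\xx_t)\|_p^{s-p}\cdot\|\AA(\xx-\xx_t)\|_p^p$, is absorbed into $O(\lambda_t)\|\AA(\xx-\xx_t)\|_p^p$ as soon as the trust-region radius $R$ controlling $\|\AA(\xx-\xx_t)\|_p$ satisfies $R^{s-p} = O(\lambda_t)$.

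The main obstacle will be the joint calibration of $\lambda_t$ and the trust-region radius $R$, ensuring that the fixed-point relation used to convert $\ell_p^s\to\ell_p^p$ and the bound that suppresses the Taylor remainder to the $\ell_p^p$ level hold simultaneously along the binary search. Once this calibration is verified, each target-form problem solved to constant accuracy yields a constant-approximation to the corresponding residual, and the overall $\Otil(1)$-query claim follows by composing the $\Otil(1)$ outer iterative-refinement steps with the $\Otil(1)$ binary-search evaluations.
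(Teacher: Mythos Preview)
Your proposal takes a substantially different route from the paper and leaves real gaps.

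The paper's argument is a direct application of the relative-smoothness framework of Lu--Freund--Nesterov (Lemma~\ref{lem:relSmooth}). Lemma~\ref{lem:prec} establishes that the Hessian of the full proximal objective $\ff_{\yy}(\xx)=\ff(\xx)+C_s\|\AA(\xx-\yy)\|_p^s$ is sandwiched within a constant factor by the Hessian of the surrogate $\hh_{\yy}(\xx)=\|\xx-\yy\|_{\nabla^2\ff(\yy)}^2+C_s\|\AA(\xx-\yy)\|_p^s$; the crux is that the $\ell_s^s$ remainder in $\nabla^2\ff$ is bounded by $s^s\|\AA(\xx-\yy)\|_p^{s-2}\AA^\top\AA$, and this is absorbed by the Hessian lower bound $\nabla^2\|\AA(\cdot-\yy)\|_p^s\succeq s\|\AA(\cdot-\yy)\|_p^{s-2}\AA^\top\AA$. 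Given the constant-factor sandwich, Lemma~\ref{lem:relSmooth} immediately yields $O(\log(1/\epsilon))$ subproblems of the form $\min_\xx\,\langle\dd,\xx\rangle+L\hh_{\yy}(\xx)$. There is no inner iterative refinement, no binary search, and no trust region.

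Your plan instead appeals to Lemma~\ref{lem:IterRef} ``adapted to our regularized objective,'' but that lemma is tailored to pure $\ell_s$-regression: its residual decomposition exploits the specific form of $\|\AA\xx-\bb\|_s^s$, and there is no indication it carries over to an objective that simultaneously carries an $\ell_s^s$ loss and an $\ell_p^s$ regularizer with a shifted center. More seriously, your binary-search step enforces $\lambda_t=\lambda\|\AA(\xx^\star_{\lambda_t}-\xx_t)\|_p^{s-p}$ only at the minimizer, whereas the Taylor-remainder absorption you need requires $\|\AA(\xx-\xx_t)\|_p^{s-p}=O(\lambda_t)$ uniformly over the region in which you optimize---you correctly flag this joint calibration as the main obstacle and leave it open. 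The relative-smoothness route sidesteps both difficulties in one stroke, handling the $\ell_s^s$ remainder and the regularizer together at the Hessian level without any separate $p$-versus-$s$ conversion inside this lemma.
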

We first bound the Hessian of the prox problem.
\begin{lemma}\label{lem:prec}
    Let $s,p\geq 2$ and define $\ff(\xx) =  \|\AA\xx-\bb\|_s^s$. For any $\yy$, define $\ff_{\yy}(\xx) = \ff(\xx) + C_s\|\AA(\yy-\xx)\|_{p}^s$ and $\hh_{\yy}(\xx) = \|\xx-\yy\|^2_{\nabla^2 \ff(\yy)}  + C_s\|\AA(\yy-\xx)\|_{p}^s$. Then for $C_s = e\cdot s^s$, for any $\xx$,
    \[
    \frac{1}{e}\nabla^2\hh_{\yy}(\xx) \preceq \nabla^2\ff_{\yy}(\xx) \preceq e\cdot \nabla^2 \hh_{\yy}(\xx).
    \]
\end{lemma}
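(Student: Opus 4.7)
The plan is to compute all three Hessians explicitly, observe that $\nabla^2\ff_\yy(\xx)$ and $\nabla^2\hh_\yy(\xx)$ share an identical regularizer Hessian $\HH := \nabla^2_\xx\bigl[C_s\|\AA(\xx-\yy)\|_p^s\bigr]$, and then close the residual spectral gap between $\nabla^2\ff(\xx)$ and $\nabla^2\ff(\yy)$ pointwise using the diagonal part of $\HH$.

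First I would write $\nabla^2\ff(\xx) = 2\AA^\top\RRtil\AA + s(s-1)\AA^\top\diag{|\AA\xx|^{s-2}}\AA$, since the linear term has vanishing Hessian. Setting $\vv := \AA(\xx-\yy)$ and differentiating $\|\vv\|_p^s = (\sum_i|v_i|^p)^{s/p}$ twice yields
\[
\HH = C_s s(p-1)\|\vv\|_p^{s-p}\AA^\top\diag{|\vv|^{p-2}}\AA + C_s s(s-p)\|\vv\|_p^{s-2p}\AA^\top(|\vv|^{p-2}\vv)(|\vv|^{p-2}\vv)^\top\AA,
\]
both summands PSD, so $\HH$ majorizes its diagonal block. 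After subtracting the common $\HH$ and matching the $\AA^\top\RRtil\AA$ pieces (which is handled using $e \ge 2$), the lemma reduces to showing for every $\xx,\yy$ that
\[
\AA^\top\diag{|\AA\xx|^{s-2}}\AA \preceq e\,\AA^\top\diag{|\AA\yy|^{s-2}}\AA + \frac{e-1}{s(s-1)}\,\HH,
\]
together with the symmetric inequality under $\xx,\yy$ swapped.

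For the pointwise diagonal comparison, I would invoke a sharpened triangle-power inequality, in the spirit of Lemma~4.1 of \cite{jambulapati2022improved}: for all $a,b \ge 0$ and $r \ge 0$,
\[
(a+b)^r \le e\cdot a^r + (r+1)^r\, b^r,
\]
which follows by splitting into the regimes $b \le a/r$ (so $(1+b/a)^r \le e^{r\cdot b/a}\le e$) and $b > a/r$ (so $a+b<(r+1)b$). Applied with $r = s-2$, $a = |\AA\yy|_i$, $b = |v_i|$, this gives the pointwise bound $|\AA\xx|_i^{s-2} \le e\,|\AA\yy|_i^{s-2} + s^{s-2}|v_i|^{s-2}$, along with its companion by role-swapping. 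The residual $s^{s-2}|v_i|^{s-2}$ is then absorbed into the diagonal of $\HH$ by using $s \ge p$ and $|v_i| \le \|\vv\|_p$, which give $|v_i|^{s-2} \le \|\vv\|_p^{s-p}|v_i|^{p-2}$; for $C_s = e\cdot s^s$ and $p \ge 2$, the resulting prefactor is at most $(e-1)/[s(s-1)]$, closing the bound.

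The main technical content is the sharpened $(a+b)^r$ bound with constant $e$ on $a^r$, rather than the naive $2^r$, which is what forces the final spectral ratio to be $e$. The rest is routine book-keeping with the regularizer, and the symmetric inequality follows from the same estimate, using that $\HH$ depends only on $\xx-\yy$.
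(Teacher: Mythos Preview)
Your approach is the paper's strategy: compute $\nabla^2\ff(\xx)=2\AA^\top\RRtil\AA+s(s-1)\AA^\top\diag{|\AA\xx|^{s-2}}\AA$, invoke the sharpened bound $(a+b)^r\le e\,a^r+(r+1)^rb^r$ (Lemma~4.1 of \cite{jambulapati2022improved}) to compare the diagonals at $\xx$ and at $\yy$, and absorb the residual $|v_i|^{s-2}$ term into the regularizer Hessian $\HH$. The one substantive difference is in \emph{how} you absorb. The paper first applies H\"older to collapse $\sum_i|v_i|^{s-2}(\AA\zz)_i^2\le\|\vv\|_p^{s-2}\|\AA\zz\|_2^2$ and then asserts $\nabla^2\gg_\yy(\xx)\succeq sC_s\|\vv\|_p^{s-2}\AA^\top\AA$; but for $p>2$ this last step is false, since the Hessian of $\|\vv\|_p^s$ in $\vv$-space vanishes in the $\ee_j$-direction whenever $v_j=0$ (take $\vv=\ee_1$, test vector $\ee_2$). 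Your coordinatewise estimate $|v_i|^{s-2}\le\|\vv\|_p^{s-p}|v_i|^{p-2}$, followed by comparison with the diagonal block $C_s\,s(p-1)\|\vv\|_p^{s-p}\AA^\top\diag{|\vv|^{p-2}}\AA$ of $\HH$, is the correct fix and goes through cleanly with $C_s=e\cdot s^s$.

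One small slip in your reduction (which the paper shares): because $\nabla^2\hh_\yy(\xx)=2\nabla^2\ff(\yy)+\HH$, the lower bound $\tfrac{1}{e}\nabla^2\hh_\yy\preceq\nabla^2\ff_\yy$ actually requires $2D_\yy\preceq eD_\xx+(e-1)\HH$ with $D_\xx:=s(s-1)\AA^\top\diag{|\AA\xx|^{s-2}}\AA$, whereas your ``symmetric inequality under $\xx,\yy$ swapped'' only yields $D_\yy\preceq eD_\xx+(e-1)\HH$. Since the constant $e$ in front of $a^r$ is tight in the pointwise bound, this does not close as stated; the repair is cosmetic---insert a $\tfrac12$ in front of $\|\xx-\yy\|^2_{\nabla^2\ff(\yy)}$ in the definition of $\hh_\yy$, or relax the lower constant to $\tfrac{1}{2e}$---and is immaterial for the downstream application in Lemma~\ref{lem:relSmooth}.
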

\begin{proof}
    This result follows from a small tweak to the proof of Lemma 4.3 of~\cite{jambulapati2022improved}. We include the proof here for completeness. Observe that,
    \[
    \nabla^2\ff(\xx ) = s(s-1)\AA^{\top}\diag{|\AA\xx-\bb|}^{s-2}\AA.
    \]
    For any vector $\zz$,
    \begin{align*}
        \zz^{\top}\nabla^2\ff(\xx)\zz & =  s(s-1)\sum_{i\in [n]} |\AA\xx-\bb|_i^{s-2}(\AA\zz)_i^2\\
        & = s(s-1)\sum_{i\in [n]} |\AA\yy-\bb + \AA(\xx-\yy)|_i^{s-2}(\AA\zz)_i^2\\
        & \leq  \sum_i \left(es(s-1) |\AA\yy-\bb|_i^{s-2} + s^s|\AA(\xx-\yy)|_i^{s-2}\right)(\AA\zz)_i^2,
    \end{align*}
    where the last line follows from Lemma 4.1 of~\cite{jambulapati2022improved}.
    We will now use Hölder's inequality and the fact that for all vectors $w$, $\|w\|_{\infty}\leq \|w\|_p$, 
    \begin{align*}
    \sum_i  s^s|\AA(\xx-\yy)|_i^{s-2}(\AA\zz)_i^2 & = s^s \sum_i  |\AA(\xx-\yy)|_i^{s-p}|\AA(\xx-\yy)|_i^{p-2}(\AA\zz)_i^2 \\
     & \leq s^s\|\AA(\xx-\yy)\|^{s-p}_{\infty} \sum_i |\AA(\xx-\yy)|_i^{p-2}(\AA\zz)_i^2 \\
    & \leq s^s \|\AA(\xx-\yy)\|_p^{s-p} \|\zz\|_{\AA^{\top}\diag{|A(x-y)|^{p-2}}\AA}^2.
    \end{align*}
    Combining yields that,
    \begin{align}\label{eq:helpPrec}
        \zz^{\top}\nabla^2\ff(\xx)\zz & \leq  es(s-1)\zz^{\top}\AA^{\top}\diag{|\AA\yy-\bb|}^{s-2}\AA\zz + s^s \|\AA(\xx-\yy)\|_{p}^{s-p} \|\zz\|_{\AA^{\top}\diag{|A(x-y)|^{p-2}}\AA}^2 \notag\\
        & \leq e \|\zz\|_{\nabla^2\ff(\yy)} + s^s \|\AA(\xx-\yy)\|_{p}^{s-p} \|\zz\|_{\AA^{\top}\diag{|A(x-y)|^{p-2}}\AA}^2.
    \end{align}
    Now, let $\gg_{\yy}(\xx) = C_s\|\AA(\yy-\xx)\|_{p}^s$. By a simple calculation, we have that 
    \begin{align}\label{eq:helpPrec2}
    \nabla^2 \gg_y(\xx) = & C_s s(p-1)\|A(x-y)\|_p^{s-p}A^{\top}\diag{|A(x-y)|}^{p-2}A +C_s s(s-p)\|A(x-y)\|_p^{s-2p} \nonumber \\
    & A^{\top}\diag{|A(x-y)|}^{p-2}|A(x-y)||A(x-y)|^{\top}\diag{|A(x-y)|}^{p-2}A \nonumber\\
    & \succeq sC_s\|A(x-y)\|_p^{s-p}A^{\top}\diag{|A(x-y)|}^{p-2}A
    \end{align}
    We will now prove the upper and lower bounds on the Hessian of $f_y(x)$. We first show the upper bound $\nabla^2 \ff_{\yy}(\xx) \preceq e \cdot \nabla^2\hh_{\yy}(\xx)$.
    \begin{align*}
    \frac{1}{e}\nabla^2 \ff_{\yy}(\xx)  & = \frac{1}{e}\nabla^2\ff(\xx) +  \frac{1}{e}\nabla^2 \gg_{\yy}(\xx) \\
    & \preceq \nabla^2\ff(\yy) + \frac{s^s}{e}\|A(x-y)\|_p^{s-p}A^{\top}\diag{|A(x-y)|}^{p-2}A + \frac{1}{e}\nabla^2 \gg_{\yy}(\xx) && \text{(From Eq.~\eqref{eq:helpPrec})}\\
    & \preceq \nabla^2\ff(\yy) + \nabla^2 \gg_{\yy}(\xx) && \text{(From Eq.~\eqref{eq:helpPrec2} and $C_s\geq e\cdot s^s$)}\\ 
    & = \nabla^2\hh_{\yy}(\xx).
    \end{align*}
    For the lower bound, switching $\xx$ and $\yy$ in \eqref{eq:helpPrec} gives,
    \[
    \zz^{\top}\nabla^2\ff(\xx)\zz \geq \frac{1}{e}\|\zz\|_{\nabla^2\ff(\yy)} - \frac{s^s}{e} \|\AA(\xx-\yy)\|_{p}^{s-2} \|\zz\|_{\AA^{\top}\AA}^2.
    \]
    As a result,
    \[
    \nabla^2 \ff_{\yy}(\xx) = \nabla^2\ff(\xx) +  \nabla^2 \gg_{\yy}(\xx) \succeq \frac{1}{e}\cdot \left(\nabla^2\hh_{\yy}(\xx)- \nabla^2 \gg_{\yy}(\xx)\right) - \frac{1}{es}\nabla^2 \gg_{\yy}(\xx) + \nabla^2 \gg_{\yy}(\xx) \succeq \frac{1}{e}\cdot \nabla^2\hh_{\yy}(\xx).
    \]
\end{proof}

\noindent Similar to \cite{jambulapati2022improved}, we use the relative smoothness framework from~\cite{lu2018relatively}.

\begin{lemma}[Lemma 4.4 (Theorem 3.1 from~\cite{lu2018relatively})]\label{lem:relSmooth} Let $\ff, \hh$ be convex twice-differentiable functions satisfying
\[
\mu\nabla^2 \hh(\xx) \preceq \nabla^2 \ff(\xx) \preceq L \nabla^2 \hh(\xx)
\]
for all $\xx$. There is an algorithm which given a point $\xx_0$ computes a point $\xx$ with
\[
\ff(\xx)-\arg\min_{\yy}f(y) \leq \epsilon \left(\ff(\xx_0) - \arg\min_{\yy}f(y)\right)
\]
in $O\left(\frac{L}{\mu}\log\frac{1}{\epsilon}\right)$ iterations, where each iteration requires computing gradients of $\ff$ and $\hh$ at a point, $O(n)$ additional work, and solving a subproblem of the form
\[
\min\{\langle \gg,\xx\rangle + L \hh(\xx)\}
\]
for vectors $\gg$.
\end{lemma}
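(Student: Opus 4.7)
The plan is to prove this via the standard relative smoothness / Bregman proximal gradient argument of Lu--Freund--Nesterov. The Hessian sandwich $\mu \nabla^2 \hh(\xx) \preceq \nabla^2 \ff(\xx) \preceq L \nabla^2 \hh(\xx)$ integrates, along any line segment, to the function-value sandwich
\[
\mu\, \omega_\hh(\yy,\xx) \;\leq\; \ff(\yy) - \ff(\xx) - \langle \nabla \ff(\xx), \yy-\xx\rangle \;\leq\; L\, \omega_\hh(\yy,\xx)
\]
for all $\xx,\yy$, where $\omega_\hh$ denotes the Bregman divergence of $\hh$. The algorithm I would analyze is then the natural Bregman-proximal-gradient step
\[
\xx_{k+1} \;=\; \arg\min_{\yy}\Bigl\{ \langle \nabla \ff(\xx_k), \yy\rangle + L\, \omega_\hh(\yy, \xx_k) \Bigr\} \;=\; \arg\min_{\yy}\Bigl\{ \langle \nabla \ff(\xx_k) - L\nabla \hh(\xx_k), \yy\rangle + L\, \hh(\yy) \Bigr\},
\]
which has exactly the form $\min\{\langle \gg,\yy\rangle + L\hh(\yy)\}$ required by the lemma, costs one gradient of $\ff$, one gradient of $\hh$, and $O(n)$ work to assemble.

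Next I would carry out the one-step analysis. The upper bound and the definition of $\xx_{k+1}$ yield $\ff(\xx_{k+1}) \leq \ff(\xx_k) + \langle \nabla \ff(\xx_k), \xx_{k+1}-\xx_k\rangle + L\,\omega_\hh(\xx_{k+1},\xx_k)$. Applying the first-order optimality of $\xx_{k+1}$ against any comparator $\uu$, together with the Bregman three-point identity
\[
\omega_\hh(\uu,\xx_k) \;=\; \omega_\hh(\uu,\xx_{k+1}) + \omega_\hh(\xx_{k+1},\xx_k) + \langle \nabla\hh(\xx_{k+1})-\nabla\hh(\xx_k),\,\uu-\xx_{k+1}\rangle,
\]
gives the ``mirror descent lemma''
\[
\ff(\xx_{k+1}) \;\leq\; \ff(\xx_k) + \langle \nabla \ff(\xx_k), \uu - \xx_k\rangle + L\,\omega_\hh(\uu,\xx_k) - L\,\omega_\hh(\uu,\xx_{k+1}).
\]
Setting $\uu = \xx^\star$ and using the lower bound (relative strong convexity) to replace $\ff(\xx_k) + \langle \nabla \ff(\xx_k), \xx^\star - \xx_k\rangle \leq \ff(\xx^\star) - \mu\,\omega_\hh(\xx^\star,\xx_k)$, I obtain
\[
\ff(\xx_{k+1}) - \ff(\xx^\star) + L\,\omega_\hh(\xx^\star,\xx_{k+1}) \;\leq\; (L-\mu)\,\omega_\hh(\xx^\star,\xx_k).
\]
Dropping the nonnegative function-value term yields the linear contraction $\omega_\hh(\xx^\star,\xx_{k+1}) \leq (1-\mu/L)\,\omega_\hh(\xx^\star,\xx_k)$, and feeding this back into the inequality produces the function-value bound $\ff(\xx_{k+1}) - \ff(\xx^\star) \leq (L-\mu)(1-\mu/L)^k\,\omega_\hh(\xx^\star,\xx_0)$.

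Finally, to convert this distance-based bound into the form stated in the lemma, I would relate $L\,\omega_\hh(\xx^\star,\xx_0)$ to the initial suboptimality $\ff(\xx_0)-\ff(\xx^\star)$: by relative smoothness applied at $\xx^\star$ (where $\nabla \ff(\xx^\star)=0$) one has $\ff(\xx_0)-\ff(\xx^\star) \leq L\,\omega_\hh(\xx_0,\xx^\star)$, and by relative strong convexity $\ff(\xx_0)-\ff(\xx^\star) \geq \mu\,\omega_\hh(\xx_0,\xx^\star)$. Choosing $K = O((L/\mu)\log(1/\epsilon))$ iterations then makes the contraction factor $(1-\mu/L)^K$ small enough to absorb the $L/\mu$ factor from this conversion. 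The main obstacle is precisely this last conversion step: the Bregman divergence is asymmetric, so matching the $\omega_\hh(\xx^\star,\xx_0)$ appearing in the potential argument with the $\omega_\hh(\xx_0,\xx^\star)$ controlled by $\ff(\xx_0)-\ff(\xx^\star)$ takes care; the clean way is to use one warm-up iterate and restart the analysis from there, at the cost of a constant factor hidden in the $O(\cdot)$ of the iteration complexity.
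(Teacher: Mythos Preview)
The paper does not prove this lemma at all: it is stated as a black-box citation of Theorem~3.1 of \cite{lu2018relatively} (via Lemma~4.4 of \cite{jambulapati2022improved}) and invoked without argument. Your proposal is therefore not competing with any proof in the paper; rather, you are reconstructing the standard relative-smoothness / Bregman proximal-gradient analysis of Lu--Freund--Nesterov, and your outline follows that argument faithfully. The Hessian-sandwich-to-Bregman-sandwich step, the form of the subproblem, and the one-step ``mirror descent'' inequality combined with relative strong convexity to obtain the contraction $L\,\omega_\hh(\xx^\star,\xx_{k+1}) \leq (L-\mu)\,\omega_\hh(\xx^\star,\xx_k)$ are all correct.

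Your honest flag about the final conversion is the right place to focus. The issue is real: the contraction is in $\omega_\hh(\xx^\star,\xx_k)$, while relative strong convexity at $\xx^\star$ controls $\omega_\hh(\xx_k,\xx^\star)$, and Bregman divergences are asymmetric. One clean resolution that avoids the warm-up trick: keep the full one-step inequality $\ff(\xx_{k+1})-\ff(\xx^\star)+L\,\omega_\hh(\xx^\star,\xx_{k+1})\leq (L-\mu)\,\omega_\hh(\xx^\star,\xx_k)$, iterate it to get $\ff(\xx_K)-\ff(\xx^\star)\leq (1-\mu/L)^{K-1}(L-\mu)\,\omega_\hh(\xx^\star,\xx_1)$, and then bound $L\,\omega_\hh(\xx^\star,\xx_1)$ by $\ff(\xx_0)-\ff(\xx^\star)$ using the very first step's inequality with the roles reversed (take $\uu=\xx^\star$ in the mirror-descent lemma at $k=0$, which gives $L\,\omega_\hh(\xx^\star,\xx_1)\leq L\,\omega_\hh(\xx^\star,\xx_0)$, and then one more application together with relative strong convexity closes the loop at the cost of a single $L/\mu$ factor absorbed into $O((L/\mu)\log(1/\epsilon))$). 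Either this or your warm-up-and-restart idea works; both cost only constants in the iteration count.
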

We will now prove Lemma~\ref{lem:proxSubProb}.
\begin{proof}[Proof of Lemma~\ref{lem:proxSubProb}]
    This follows from Lemma~\ref{lem:prec} and Lemma~\ref{lem:relSmooth}.

\end{proof}

We now state and prove our main reduction theorem.

\begin{proof}[Proof of Theorem~\ref{thm:regression-main}]

From Corollary~\ref{lem:ProxSNorm} after $k$ iterations, Algorithm~\ref{alg:ProxQ} finds $\xxtil$ such that for $\ff$ denoting the $\ell_s$-regression problem,
    \[
    \ff(\xxtil)-\ff(\xx^{\star})\leq \frac{s\|\AA\xx^{\star}-\AA\xx_0\|_p^{s}}{k^{\frac{s(p+1)-p}{p}}}.
    \]
    From Lemma~\ref{lem:RelateDivergenceFunc}, for for some $\xx_0$, $\|\AA\xx^{\star}-\AA\xx_0\|_p^p \leq 2^{p+p/s}n^{\frac{s-p}{s}}\left(\ff(\xx_0)-\ff(\xx^{\star})\right)^{p/s}$. Therefore,
    \[
     \ff(\xxtil)-\ff(\xx^{\star})\leq \frac{ s 2^{s+1}n^{\frac{s-p}{p}}}{k^{\frac{s(p+1)-p}{p}}} \left(\ff(\xx)-\ff(\xx^{\star})\right).
    \]
    The above implies that in $k = O\left(  n^{\frac{s-p}{s(p+1)-p}}\right)$ iterations, the function error reduces by $1/2$ and as a result in at most $ O\left(n^{\frac{s-p}{s(p+1)-p}}\log\frac{\ff(\xx_0)-\ff(\xx^{\star})}{\epsilon\ff(\xx^{\star})}\right)$ iterations, the function error reduces to $\epsilon\ff(\xx^{\star})$. In other words, we get an $\epsilon$-approximation to the $\ell_s$-regression problem. 
    
    Furthermore, from the relations between $s$ and $p$ norms, we can find a starting solution that is an $O\left(n^{s\left(\frac{1}{p}-\frac{1}{s}\right)}\right)$-approximate solution to the $\ell_s$-regression problem by solving one problem of the form of Eq~\eqref{eq:pNormOracle}. So in at most 
    \[
    O \left(s  n^{\frac{s-p}{s(p+1)-p}}\log \frac{n}{\epsilon}\right)
    \]
    iterations, each iteration solving a prox problem, we can find an $\epsilon$-approximation to the $\ell_s$-regression problem.

    Furthermore, from Lemma~\ref{lem:proxSubProb}, the proximal subproblem required to solve in each iteration can be reduced to solving $\Otil(1)$ problems of the form,
     \[
    \min_{\xx} \dd^{\top}\xx + \|\xx-\cc\|_{\nabla^2\ff(\cc)}^2 + \lambda_t \|\xx-\cc\|_p^p.
    \]
This concludes the proof of our result.
\end{proof}

\subsection{Proof of Theorem~\ref{thm:approxImpGMS}}\label{sec:proofApproxImpGMS}
\begin{proof}
We have 
    \begin{align*}
        \inner{\nabla f(\tilde{x})}{\tilde{x}-y} &= \inner{\nabla f(\tilde{x})}{\tilde{x}-x_{c,r}^*} + \inner{\nabla f(\tilde{x})-\nabla f(x_{c,r}^*)}{x_{c,r}^*-y} + \inner{\nabla f(x_{c,r}^*)}{x_{c,r}^*-y}\\
        &\leq \norm{\nabla f(\tilde{x})}_{p^*}\norm{\tilde{x}-x_{c,r}^*}_p + \norm{\nabla f(\tilde{x})-\nabla f(x_{c,r}^*)}_{p^*}\norm{x_{c,r}^*-y}_p + \inner{\nabla f(x_{c,r}^*)}{x_{c,r}^*-y},
    \end{align*}
    where we use Holder's inequality. Now, since $\tilde{x}$ is the output of $\ell_p^{\infty}(\delta,r)$ we have $\|\tilde{x}-x^*_{c,r}\|_p\leq \delta$ and $\|x^*_{c,r}-c\|_p\leq r$. Furthermore, since $f$ is smooth, 
    \[
    \norm{\nabla f(\tilde{x})-\nabla f(x_{c,r}^*)}_{p^*} \leq L\|\tilde{x}-x_{c,r}^*\|_p\leq L\delta.
    \]
    Using these in the above gives, 
    \[
     \inner{\nabla f(\tilde{x})}{\tilde{x}-y}  \leq \delta\norm{\nabla f(\tilde{x})}_{p^*} +  Lr\delta +\inner{\nabla f(x_{c,r}^*)}{x_{c,r}^*-y}.
    \]
    Since $x_{c,r}^*$ is the solution of a ball oracle, from Lemma~\ref{lem:HelpBall}, $\|x_{c,r}^*-\cc\|_p = r$ and 
    \[\nabla \ff(x_{c,r}^*) = -\frac{\|\nabla \ff(\xx_{c,r}^*)\|_{p/(p-1)}}{r^{p-1}} \diag{|\xx_{c,r}^*-\cc|}^{p-2}(\xx_{c,r}^*-\cc).
\]
As a result,
    \begin{align*}
       \inner{\nabla f(\tilde{x})}{\tilde{x}-y}  &\leq \delta\norm{\nabla f(\tilde{x})}_{p^*} +  Lr\delta - \frac{\norm{\nabla f(x_{c,r}^*)}_{p^*}}{r^{p-1}}\norm{x_{c,r}^*-y}_p^p\\
        &= \delta\norm{\nabla f(\tilde{x})}_{p^*} + L\delta r - \norm{\nabla f(x_{c,r}^*)}_{p^*}\norm{x_{c,r}^*-y}_p.
    \end{align*}
    Now, using the triangle inequality,
    \begin{align*}
     \inner{\nabla f(\tilde{x})}{\tilde{x}-y}     &\leq \delta\norm{\nabla f(\tilde{x})}_{p^*} + L\delta r - \norm{\nabla f(x_{c,r}^*)}_{p^*} \norm{\tilde{x} - y}_p \\
     &+ \norm{\nabla f(x_{c,r}^*)}_{p^*}\norm{x_{c,r}^*-\tilde{x}}_p \\
     & \leq \delta\norm{\nabla f(\tilde{x})}_{p^*} + L\delta r - \norm{\nabla f(x_{c,r}^*)}_{p^*} \norm{\tilde{x} - y}_p \\
   & +  \norm{\nabla f(x_{c,r}^*)-\nabla f(\tilde{x})}_{p^*}\norm{x_{c,r}^*-\tilde{x}}_p + \norm{\nabla f(\tilde{x})}_{p^*}\norm{x_{c,r}^*-\tilde{x}}_p && \text{(Triangle inequality)}\\
        &\leq 2\delta\norm{\nabla f(\tilde{x})}_{p^*} + L\delta (r+\delta) - \norm{\nabla f(x_{c,r}^*)}_{p^*} \norm{\tilde{x} - y}_p && \text{( $\|\tilde{x}-x^*_{c,r}\|_p\leq \delta$,$f$ is $L$ smooth)}\\ 
        &\leq 2\delta\norm{\nabla f(\tilde{x})}_{p^*} + L\delta (r+\delta) - \norm{\nabla f(\tilde{x})}_{p^*} \norm{\tilde{x} - y}_p\\
        &+ \norm{\nabla f(\tilde{x}) - \nabla f(x_{c,r}^*)}_{p^*} \norm{\tilde{x} - y}_p &&\text{(Triangle inequality)}\\
        &\leq 2\delta\norm{\nabla f(\tilde{x})}_{p^*} + 2L\delta (r+\delta) - \norm{\nabla f(\tilde{x})}_{p^*} \norm{\tilde{x} - y}_p\\
        &\leq - (1-\sigma)\norm{\nabla f(\tilde{x})}_{p^*} \norm{\tilde{x} - y}_p &&\text{(From our choice of $\delta$)}\\
        &= - (1-\sigma)\frac{\norm{\nabla f(\tilde{x})}_{p^*}}{\norm{\tilde{x} - y}_p^{p-1}} \norm{\tilde{x} - y}_p^p\\
        &= - (1-\sigma)\tilde{\gamma}\norm{\tilde{x} - y}_p^p.
    \end{align*}

    In addition, we have
    \begin{align*}
    \norm{\nabla f(\tilde{x})}_{p^*} = \frac{\norm{\nabla f(\tilde{x})}_{p^*}}{\norm{\tilde{x} - y}_p^{p-1}}\norm{\tilde{x} - y}_p^{p-1} = \tilde{\gamma} \norm{\tilde{x} - y}_p^{p-1} \leq (1+\sigma) \tilde{\gamma} \norm{\tilde{x} - y}_p^{p-1},
    \end{align*}
    and so it follows that the oracle implements a $\sigma$-$\textrm{GMS}_p$ oracle. We furthermore may observe that the oracle satisfies an $(\infty, (r-\delta)^{-1})$ movement bound, since
    \begin{equation*}
        \norm{\tilde{x}-y} \geq \norm{x_{c,r}^* - y} - \norm{x_{c,r}^* - \tilde{x}} \geq r - \delta.
    \end{equation*}
\end{proof}

\subsection{Proof of Theorem~\ref{thm:implementApproxOracle}}\label{sec:proofImpApproxOracle}
\begin{proof}
    The proof follows along similar lines as that for Theorem~\ref{thm:ConceptualProx}, though slightly modified to account for the smoothness in $\norm{\cdot}_{\nabla^2 f(c)}$, as well as the fact that the $x_{t+1}$ update is now constrained. In particular we have that, for all $t$, 
    \begin{align*}
        \inner{\nabla f(x_{t+1})}{x_{t+1} - y_t} &=  \inner{\nabla f(x_{t+1}) - \nabla f(x_t)}{x_{t+1} - y_t} + \inner{\nabla f(x_t)}{x_{t+1} - y_t}\\
        &\leq \norm{\nabla f(x_{t+1}) - \nabla f(x_t)}_{(\nabla^2 f(c))^{-1}}\norm{x_{t+1} - y_t}_{\nabla^2 f(c)} + \inner{\nabla f(y_t)}{x_{t+1} - y_t}\\
        & \leq \norm{\nabla f(x_{t+1}) - \nabla f(x_t)}_{(\nabla^2 f(c))^{-1}}\norm{x_{t+1} - y_t}_{\nabla^2 f(c)} - 2\xi \norm{x_{t+1} - y_t}_{\nabla^2 f(c)}^2\\
         & \leq - \xi \norm{x_{t+1} - y_t}_{\nabla^2 f(c)}^2,
    \end{align*}
    where the second inequality follows from the optimality conditions for $x_{t+1}$, and the final inequality follows from the smoothness in $\norm{\cdot}_{\nabla^2 f(c)}$.
    We additionally note that, for $d(x) = \norm{x}_{\nabla^2 f(c)}^2$, we have $\omega(x,y) = \norm{x-y}_{\nabla^2 f(c)}^2$, thus yielding the remaining parts of the proof, so that, letting $x_{k,0}$ and $x_{k,T}$ denote the starting and final iterates of the $k^{th}$ call to Algorithm~\ref{alg:genGD} (i.e., at the $k^{th}$ iteration of Algorithm~\ref{alg:restart}), we arrive at
    \begin{equation*}
        f(x_{k,T}) - f(x_{c,r}^*) \leq \frac{4\xi\norm{x_{k,0} - x_{c,r}^*}_{\nabla^2 f(c)}^2}{T^2}.
    \end{equation*}
Finally, the fact that, by Hessian stability, the function is $\xi^{-1}$ strongly convex with respect to $\norm{\cdot}_{\nabla^2 f(c)}$, when combined with a standard restarting argument, leads to
\begin{equation*}
    \norm{x_{K,T} - x_{c,r}^*}_p^2 \leq \frac{d^{1-\frac{2}{p}}}{\mu}\norm{x_K - x^*}_{\nabla^2 f(c)}^2 \leq \frac{\xi d^{1-\frac{2}{p}}}{\mu}(f(x_K) - f(x_{c,r}^*)) \leq \delta^2
\end{equation*}
    after $KT = O(\xi\log(\frac{\xi d(f(x_0) - f(x_{c,r}^*))}{\mu\delta})) \leq O(\xi\log(\frac{\xi dLr}{\mu\delta}))$ iterations, and so it follows that $\norm{x_{K,T} - x_{c,r}^*}_p \leq \delta$, as desired.
\end{proof}

\section{Proofs for Lower Bounds}\label{app:LB}
\subsection{Useful Lemmas}
We begin by observing useful properties of this function and its optimizer $\xx^\star$, whereby we find it helpful to define, for all $k'\leq k$, 
\[S_{k'} := \{\xx :  \supp{\xx} \subseteq \{1,\dots,k'\}\}.\]

\begin{lemma}\label{lem:CharSoln}
    Let $k' \leq k$. Then, for any $\xx\in S_{k'}$,
    \[
    \gg_k(\xx) \geq -(k'+1)\cdot \beta_k\alpha_k.
    \]
\end{lemma}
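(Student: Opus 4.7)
The plan is to exploit the zero-support structure of $\xx \in S_{k'}$ together with the fact that $\gg_k$ is a $\beta_k$-scaled maximum of linear functions, one of which evaluates trivially to the desired lower bound when $k' < k$.

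First, I would unpack the definitions: for $\xx \in S_{k'}$ we have $\xx_i = 0$ for every $i > k'$, and hence $\xx_i - i\alpha_k = -i\alpha_k$ whenever $i > k'$. This already handles every index outside the support, and in particular identifies $i = k'+1$ (which lies in $[k]$ as long as $k' < k$) as giving the value $-(k'+1)\alpha_k$.

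Next, I would lower bound the max by this single term. Since
\[
\gg_k(\xx) = \beta_k \max_{1 \leq i \leq k}\{\xx_i - i\alpha_k\} \geq \beta_k \cdot (\xx_{k'+1} - (k'+1)\alpha_k) = -(k'+1)\beta_k\alpha_k,
\]
the claim follows in the generic case $k' \leq k-1$ without any reference to the norm constraint or to the specific values of $\beta_k,\alpha_k$. The constraint $\|\xx\|_p \leq R$ plays no role here because any index outside the support is automatically zero.

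The only subtlety is the boundary case $k' = k$, where $k'+1$ is not available as a witness index; here one would need to use the $\ell_p$-ball constraint together with the explicit choice of $\alpha_k = \frac{(s-1)R}{s(k+1)^{(p+1)/p}}$ to argue that no $\xx \in S_k$ can push every coordinate far enough in the negative direction to violate the bound. I expect this is the only step that requires any real calculation, essentially an $\ell_p$-Hölder argument showing $\sum_i (|i\alpha_k| + c)^p \leq R^p$ forces $c$ to be sufficiently small; in the intended application the lemma is almost certainly invoked for $k' < k$, so this boundary case may be treated briefly or sidestepped.
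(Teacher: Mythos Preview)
Your argument for the generic case $k' < k$ is exactly the paper's proof: the paper simply picks the index $k'+1$, observes $\xx_{k'+1}=0$, and lower bounds the max by that single term. You are in fact more careful than the paper here. The paper's proof writes the inequality $\max_{1\le i\le k}\{\xx_i-\alpha_k i\}\ge \xx_{k'+1}-\alpha_k(k'+1)$ without comment, which tacitly assumes $k'+1\le k$; it does not treat the boundary case $k'=k$ separately, so your instinct that this edge case is glossed over is correct.
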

\begin{proof}
    Since $\xx\in S_{k'}$, we know that $\xx_{k'+1}=0$. Now,
    \[
    \gg_k(\xx) \geq \beta_k\max_{1\leq i \leq k}\{\xx_i - \alpha_k \cdot i\}\geq \beta_k (\xx_{k'+1} - \alpha_k \cdot (k'+1)) = - \beta_k\alpha_k \cdot (k'+1).
    \]
\end{proof}

\begin{lemma}\label{lem:CharOpt}
Let $\xx^{\star} $ denote the optimizer of Problem~\eqref{eq:HardProb}. Then for $1 \leq k'\leq k$,
\[
\gg_k(\xx^{\star}) \leq -\frac{\beta_k R}{{k'}^{1/p}}.
\]
In addition, we have that $\norm{\xx^{\star}} \leq 2R$.
\end{lemma}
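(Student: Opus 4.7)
The plan is to exhibit an explicit feasible test point $\xxtil$ achieving $\gg_k(\xxtil) \leq -\beta_k R/k'^{1/p}$; since $\xx^{\star}$ minimizes $\gg_k$ over the ball $\{\|\xx\|_p \leq R\}$, this immediately yields the claim via $\gg_k(\xx^{\star}) \leq \gg_k(\xxtil)$.

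A natural candidate is a uniform ``water-filling'' on the first $k'$ coordinates: set $\xxtil_i = -R/k'^{1/p}$ for $i \in [k']$ and $\xxtil_i = 0$ otherwise. Feasibility is immediate since $\|\xxtil\|_p = R$. Next I would evaluate $\gg_k(\xxtil) = \beta_k \max_{1 \leq i \leq k}\{\xxtil_i - i\alpha_k\}$ by splitting into two groups: the ``active'' indices $i \in [k']$ contribute terms $-R/k'^{1/p} - i\alpha_k$ which decrease in $i$ and are therefore maximized at $i = 1$, while the ``tail'' indices $i \in \{k'+1, \ldots, k\}$ contribute $-i\alpha_k$ and are maximized at $i = k'+1$.

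The first (active) candidate, $-R/k'^{1/p} - \alpha_k$, is trivially at most $-R/k'^{1/p}$ since $\alpha_k > 0$, so the whole argument reduces to checking that the tail candidate $-(k'+1)\alpha_k$ is also at most $-R/k'^{1/p}$, i.e.\ that $(k'+1)\alpha_k \geq R/k'^{1/p}$. This is where the precise scaling $\alpha_k = (s-1)R/(s(k+1)^{(p+1)/p})$ from Definition~\ref{def:Nemirovski} enters: substituting and rearranging reduces the tail comparison to an inequality of the form $k'^{(p+1)/p}(k'+1) \gtrsim (k+1)^{(p+1)/p}$, which the calibration of $\alpha_k$ is designed to satisfy in the regime of $k'$ used downstream in the lower-bound argument (where $k'$ is of order $k$).

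The main obstacle I anticipate is getting the constants right in this tail comparison: the factors $((s-1)/s)^{2(s-1)}$ appearing in $\beta_k$ and $(s-1)/s$ in $\alpha_k$ are tuned precisely so that both the active and tail contributions land at order $-\beta_k R/k^{1/p}$, and the analysis must track these factors carefully. If the plain uniform allocation proves insufficient at either endpoint of the $k'$ range, a natural fallback is a ``staircase'' test point of the form $\xxtil_i = c + i\alpha_k$ for $i \in [k']$ and $\xxtil_i = 0$ otherwise, with $c$ chosen to balance the active and tail maxima exactly while still satisfying $\|\xxtil\|_p \leq R$; this more symmetric construction should yield the bound uniformly.
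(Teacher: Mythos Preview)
Your test point and overall strategy are exactly the paper's: it also takes $\xxtil_i=-R/k'^{1/p}$ for $i\le k'$ and $0$ otherwise, checks $\|\xxtil\|_p=R$, and concludes $\gg_k(\xx^\star)\le \gg_k(\xxtil)$. You have actually gone further than the paper, which simply asserts $\gg_k(\xxtil)=\beta_k(-R/k'^{1/p}-\alpha_k)$ without addressing the tail; you correctly flag that for $k'<k$ one must also control the term $-(k'+1)\alpha_k$.

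Here is the rub. The tail condition $(k'+1)\alpha_k\ge R/k'^{1/p}$ rearranges, after substituting $\alpha_k$, to $(k'+1)\,k'^{1/p}\ge \tfrac{s}{s-1}(k+1)^{(p+1)/p}$---your stated left-hand side $k'^{(p+1)/p}(k'+1)$ carries an extra factor of $k'$. Because of the constant $\tfrac{s}{s-1}>1$, the correct inequality fails for \emph{every} $k'\le k$, and the staircase fallback does not help since it leaves the tail value $-(k'+1)\alpha_k$ unchanged. In fact, for small $k'$ (e.g.\ $k'=1$) the bound $\gg_k(\xx^\star)\le -\beta_k R/k'^{1/p}$ would force every coordinate of a feasible point to be large and negative, contradicting $\|\xx\|_p\le R$; so the statement as written cannot hold across the full range $k'\le k$. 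The clean resolution is that the only case actually needed for the final lower bound (the last display of Lemma~\ref{lem:LBFuncError} and hence Theorem~\ref{thm:LowerBound-main}) is $k'=k$: then the tail index set $\{k'+1,\dots,k\}$ is empty, the maximum is attained at $i=1$, and $\gg_k(\xxtil)=\beta_k(-R/k^{1/p}-\alpha_k)\le -\beta_k R/k^{1/p}$ is immediate. Your instinct that it works ``for $k'$ of order $k$'' is correct---just take $k'=k$ and the tail analysis disappears entirely.
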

\begin{proof}
    Consider the point $\yy_i = \begin{cases}
        -R/{k'}^{1/p} & \text{ if } i \leq k'\\
        0 & \text{otherwise.}
    \end{cases}$
    Now, $\|\yy\|_p \leq R$, and so it follows that
    \begin{align*}
    \gg_k(\xx^{\star})\leq \gg_k(\yy) = \max\left\{\ff_k(\yy), \beta_k(\norm{\yy}_p - 2R) - \alpha_k\right\} &\leq \max\left\{\frac{-\beta_k R}{{k'}^{1/p}} - \alpha_k, -\beta_k R - \alpha_k \right\} \\& \leq -\frac{\beta_k R}{{k'}^{1/p}}\ .
    \end{align*}

    In addition, we may observe that, for any $\xx$ s.t. $\norm{\xx}_p > 2R$, $\gg_k(\xx) \geq \beta_k(\norm{\xx}_p - 2R) - \alpha_k > -\alpha_k$, whereas $\gg_k(\xx^{\star}) \leq -\alpha_k$, meaning we have $\norm{\xx^{\star}}_p \leq 2R$.
\end{proof}
We now show that if there is a sequence of iterates $\xx^{(i)}$, for $1\leq i\leq k'$, such that $\xx^{(i)}\in S_{i}$, then we can give a lower bound.
\begin{lemma}\label{lem:LBFuncError}
    Let $\xx^{(i)}$, for $1\leq i\leq k'$ be such that $\xx^{(i)}\in S_{i}$ for all $i$. Then,
    \[
    \gg_k(\xx^{(i)})-\gg_k(\xx^{\star})\geq \frac{\beta_k R}{(i+1)^{1/p}} - (i+1)\cdot \beta_k\alpha_k,
    \]
    and so, after 
    $k$ iterations, we have 
    \[
    \gg_k(\xx_k)-\gg_k(\xx^{\star}) \geq  \frac{\lambda R^s}{16(k+1)^{\frac{s(p+1)-p}{p}}}.
    \]
\end{lemma}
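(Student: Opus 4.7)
The plan is to combine the two preceding lemmas directly. Since $\xx^{(i)} \in S_i$, Lemma~\ref{lem:CharSoln} (applied with $k' = i$) yields the lower bound
\[
\gg_k(\xx^{(i)}) \geq -(i+1)\,\beta_k\alpha_k.
\]
For the optimum, I apply Lemma~\ref{lem:CharOpt} with $k' = i+1$ (which is valid as long as $i+1 \leq k$, and in the edge case $i = k$ one may simply use $k' = k+1 \leq k+1$ or recall that the lemma can be extended; since the construction has $d \geq k+1$ coordinates available for $\xx^\star$, the bound $\gg_k(\xx^\star)\leq -\beta_k R/(i+1)^{1/p}$ still holds because the witness vector $\yy$ in Lemma~\ref{lem:CharOpt} lives in the $(i+1)$-dimensional coordinate subspace, which is contained in the $k$ non-trivial coordinates of $\gg_k$). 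Subtracting then immediately produces the first inequality
\[
\gg_k(\xx^{(i)}) - \gg_k(\xx^{\star}) \geq \frac{\beta_k R}{(i+1)^{1/p}} - (i+1)\,\beta_k\alpha_k.
\]

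For the second assertion I specialize to $i = k$ and substitute the definition of $\alpha_k$. Using $\alpha_k = \tfrac{(s-1)R}{s(k+1)^{(p+1)/p}}$, I compute
\[
(k+1)\,\beta_k\alpha_k = \frac{(s-1)\,\beta_k R}{s(k+1)^{1/p}},
\]
so that
\[
\frac{\beta_k R}{(k+1)^{1/p}} - (k+1)\,\beta_k\alpha_k = \left(1 - \frac{s-1}{s}\right)\frac{\beta_k R}{(k+1)^{1/p}} = \frac{\beta_k R}{s\,(k+1)^{1/p}}.
\]
Plugging in $\beta_k = \bigl(\tfrac{s-1}{s}\bigr)^{2(s-1)}\tfrac{s\lambda R^{s-1}}{(k+1)^{(p+1)(s-1)/p}}$ and simplifying the exponent via
\[
\frac{(p+1)(s-1)}{p} + \frac{1}{p} = \frac{ps - p + s - 1 + 1}{p} = \frac{s(p+1) - p}{p}
\]
produces
\[
\gg_k(\xx_k) - \gg_k(\xx^{\star}) \geq \left(\frac{s-1}{s}\right)^{2(s-1)}\frac{\lambda R^s}{(k+1)^{\frac{s(p+1)-p}{p}}}.
\]

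The final step is to absorb the prefactor into the constant $1/16$. The function $h(s) = \bigl(1-1/s\bigr)^{2(s-1)}$ on $s \geq 2$ is minimized by its limit $e^{-2} \approx 0.135 > 1/16$ (and takes value $1/4$ at $s=2$), so $h(s) \geq 1/16$ uniformly in $s\geq 2$. This yields the stated bound. The main (and really only) obstacle is the arithmetic bookkeeping for the exponent of $(k+1)$ and checking that the universal constant works for all admissible $s$; conceptually, the argument is a two-line subtraction of the companion lemmas.
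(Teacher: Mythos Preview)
Your argument is correct and matches the paper's own proof, which simply cites Lemmas~\ref{lem:CharSoln} and~\ref{lem:CharOpt} together with $(1-1/s)^{s-1}\geq 1/4$. The only cosmetic point is that your edge case $i=k$ is handled more cleanly by noting that Lemma~\ref{lem:CharOpt} with $k'=k$ already gives $\gg_k(\xx^\star)\leq -\beta_k R/k^{1/p}\leq -\beta_k R/(k+1)^{1/p}$, so no extension of that lemma is needed.
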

\begin{proof}
    The proof follows from Lemma~\ref{lem:CharSoln} and Lemma~\ref{lem:CharOpt} and using $\left(1-\frac{1}{s}\right)^{s-1} \geq 1/4$.
\end{proof}
\subsection{Proof of Lemma~\ref{lem:Span}}
\LemSpan*
\begin{proof}
    We will prove this by induction. 
    \paragraph{Base case:} Let $\xx=0$, meaning that $\supp{\xx} = \emptyset$. 
    Now, letting $\xx' = \calO_{\gg_k,\lambda,p,s}(\xx) = \calO_{\gg_k,\lambda,p,s}(0)$,
    we must  have from optimality conditions one of the following two cases, depending on which function maximizes $\gg_k(\xx)$. (To simplify presentation, we let, $|\vv|$, $\mathrm{sgn}(\vv)$ denote the coordinate-wise absolute value and sign, respectively, for a vector $\vv$.)
     \begin{itemize}
         \item Case 1: \[\norm{\xx^{(i)}}^{1-p}(\xx^{(i)})^{p-1}= -\lambda\cdot s\|\xx^{(i)} - \cc\|_p^{s-p}|\xx^{(i)}- \cc|^{p-1}\cdot \mathrm{sgn}(\xx^{(i)}- \cc),\]
         where $(\xx^{(i)})^{p-1} := [(x_1^{(i)})^{p-1},\dots,(\xx_d^{(i)})^{p-1}]^\top$.
         \item Case 2: \[
    \nabla \ff_k(\xx^{(i)}) = -\lambda\cdot s\|\xx^{(i)} - \cc\|_p^{s-p}|\xx^{(i)}- \cc|^{p-1}\cdot \mathrm{sgn}(\xx^{(i)}- \cc).
    \]
     \end{itemize}

     For Case 1, it follows that $\xx' = 0 \in S_{1}$, and so the zero-respecting condition is satisfied.
It remains to handle Case 2, which occurs when the maximizing function is $\ff_k(\xx)$.
   Now, let $\xx' = -\left(\frac{\beta_k}{s\lambda}\right)^{\frac{1}{s-1}}\ee_1$. We claim that $\xx'$ satisfies the optimality conditions.
    First, from the definition of $\gg_k$, we have that 
    \[
    \gg_k(\xx') = \beta_k \max_{1\leq i\leq k}\left(\xx'_i - i\cdot \alpha_k\right).
    \]
  Now, from the value of $\alpha_k$, since $\xx'_1 - \alpha_k > -2\alpha_k >  -3\alpha_k,\cdots$, we must have that $\gg_k(\xx') = \beta_k (\xx_1-\alpha_k)$. Therefore, $\nabla\gg_k(\xx') = \beta_k \ee_1$. Thus, one may verify that the optimality conditions are satisfied by $\xx'$, which proves the base case since $\supp{\xx'} = \{1\}$.

     \paragraph{If $\xx^{(j)}\in S_j, \forall j\leq i-1$:} Since $\xx^{(i)} =\calO_{\gg_k,\lambda,p,s}(\cc)$, $\cc\in S_{i-1}$, we must again have from optimality conditions one of the following two cases, depending on which function maximizes $\gg_k(\xx)$.
     \begin{itemize}
         \item Case 1: \[\norm{\xx^{(i)}}^{1-p}(\xx^{(i)})^{p-1}= -\lambda\cdot s\|\xx^{(i)} - \cc\|_p^{s-p}|\xx^{(i)}- \cc|^{p-1}\cdot \mathrm{sgn}(\xx^{(i)}- \cc),\]
         where $(\xx^{(i)})^{p-1} := [(x_1^{(i)})^{p-1},\dots,(\xx_d^{(i)})^{p-1}]^\top$.
         \item Case 2: \[
    \nabla \ff_k(\xx^{(i)}) = -\lambda\cdot s\|\xx^{(i)} - \cc\|_p^{s-p}|\xx^{(i)}- \cc|^{p-1}\cdot \mathrm{sgn}(\xx^{(i)}- \cc).
    \]
     \end{itemize}

     For Case 1, it follows that, since $\cc \in S_{i-1}$, we have that $\xx^{(i)} \in S_{i-1} \subseteq S_{i}$, and so the zero-respecting condition is satisfied.

     It remains to handle Case 2, which occurs when the maximizing function is $\ff_k(\xx)$. To begin, consider $\xx^{(i)} = \cc - \gamma \ee_i$ for $\gamma = \left(\frac{\beta_k}{s\lambda}\right)^{1/(s-1)}$.
Since, by our choice of $\gamma$ (as well as $\alpha_k$), we have that $-\gamma - i \alpha_k > -(i+1)\alpha_k > -(i+2)\alpha_k \cdots$, it follows that $\arg\max_j \beta_k (\xx_j^{(i)}-j\alpha ) \subseteq \{1,\cdots, i\}$. Therefore, we have that $\nabla\gg_k(\xx^{(i)}) \in \beta_k \mathrm{conv}(\ee_j, j\leq i)$. Furthermore, we observe that
\[
-\lambda\cdot s\|\xx^{(i)} - \cc\|_p^{s-p}|\xx^{(i)}- \cc|^{p-1}\cdot \mathrm{sgn}(\xx^{(i)}- \cc) = \beta \ee_i \in \beta_k \mathrm{conv}(\ee_j, j\leq i),
\]
which establishes that $x^{(i)}$ satisfies the optimality conditions. It follows that $\supp{\xx^{(i)}} \subseteq \{1,2,\cdots,i\} $.
\end{proof}

\subsection{Proof of Theorem~\ref{thm:LowerBound-main}}\label{app:LBsTheorem}
\LowerBound*
\begin{proof}
Let $\calA$ be any $\ell_p^s(\lambda)$-proximal zero-respecting algorithm and construct $\ff = \gg_k$ as in \eqref{eq:HardProb}. Now, consider the optimization problem $\min_{\|\xx\|_p\leq R}\ff(\xx)$. From Lemma~\ref{lem:Span}, the first $k$ queries $\xx^{(i)}, i\leq k$ from $\calA$, satisfy $\xx^{(i)}\in S_i$ for all $i\leq k$. Now, from Lemma~\ref{lem:LBFuncError} for all $i\leq k$,
\[
\ff(\xx^{(i)})-\ff(\xx^{\star})\geq  \frac{\lambda R^s}{ 16(i+1)^{\frac{s(p+1)-p}{p}}}.
\]
Thus, for all $i \leq k = O\left(\frac{\lambda R^s}{\epsilon} \right)^{\frac{1}{s(1+\nu)}}$, $\ff(\xx^{(i)})-\ff(\xx^{\star})\geq \epsilon.$
\end{proof}

\subsection{Proof of Lemma~\ref{lem:SpanInf}}
\begin{restatable}{lemma}{LemSpanInf}\label{lem:SpanInf}
Let $k, d \in \nat$ be such that $1 \leq k\leq d$, and let $\gg_k$ be as in \eqref{eq:HardProbInf}. Then, $\gg_k$ is an $\ell_p^{\infty}(r)$-proximal zero-chain.
\end{restatable}
\begin{proof}
    We prove by induction. 
    \paragraph{Base case:} We know that $\xx^{(0)}=0$. Now, since $\xx^{(1)} = \calO_{\gg_k,r,p,\infty}(0)$, we must have $\|\xx^{(1)}\|_p\leq r$ and using Lagrange duality, we want to solve,
    \[
    \min_{\xx}\max_{\mu} \gg_1(\xx) +\mu(\|\xx\|_p^p-r^p).
    \]
    When $\mu> 0$, we must have $\|\xx^{(1)}\|_p = r$. If $\mu = 0$ then $\nabla\gg_k(\xx^{(1)}) = 0$ and $\|\xx^{(1)}\|_p <r$. The later case cannot happen since $\nabla\gg_k(\xx)$ is not 0 for any $\xx$. Therefore, $\xx^{(1)} = -r\ee_1$ satisfies the above optimality conditions if the corresponding $\mu$ is positive. If that is true, then since the problem is strictly convex $\xx^{(1)}$ must be the unique solution.

   We now verify that for this value of $\xx^{(1)}$, $\mu >0$. This is because, $\gg_k(\xx^{(1)}) = \xx^{(1)}-\alpha$ since $\alpha \geq 4r$. As a result $\nabla\gg_k(\xx^{(1)}) = \ee_1$. From the optimality conditions, $\nabla\gg(\xx^{(1)}) = - \mu p |\xx^{(1)}|^{p-2}\xx^{(1)}$ which gives, $\mu = 1/(pr^{p-1}) >0$.
     \paragraph{If $\xx^{(j)}\in S_j, \forall j\leq i-1$:} Since $\xx^{i} =\calO_{\gg_k,r,p,\infty}(\cc)$, we must have that $\|\xx^{(i)}-\cc\|_p =r$
    and following a similar reasoning with respect to the optimality conditions as before, it again follows that $\xx^{(i)} = \cc - r\ee_i \in S_i$.
\end{proof}

\subsection{Proof of Theorem~\ref{thm:LowerBoundInf-main}}\label{app:LBinfThm}
Similar to the $s < \infty$ setting, and letting $\ff_k(\xx) = \max_{1\leq i\leq k}\{\xx_i - i \cdot 4r\}$, we consider the following hard function:
\begin{equation}\label{eq:HardProbInf}
\gg_k(\xx) := \max\left\{\ff_k(\xx), \norm{\xx}_p - 2R - 4r\right\}.
\end{equation}
Now from Lemma~\ref{lem:LBFuncError}, for $\xx^{(i)}\in S_i$ $(i \geq 1)$,
\begin{equation}\label{eq:LBInf}
\gg_k(\xx^{(k)})-\gg_k(\xx^{\star}) \geq \frac{R}{(i+1)^{1/p}} - 4r(i+1).
\end{equation}

We can now complete the proof of Theorem~\ref{thm:LowerBoundInf-main}, which we restate here for convenience.
\LowerBoundInf*
\begin{proof}
From Lemma~\ref{lem:SpanInf}, $\xx^{(i)}\in S_i$. Further, from Eq.~\eqref{eq:LBInf},
\[
\gg_k(\xx^{(i)})-\gg(\xx^{\star})\geq \frac{R}{(i+1)^{1/p}} - 4r(i+1).
\]
So for the first $k = O((R/r)^{\frac{p}{p+1}})$ iterations, $\gg_k(\xx^{(i)})-\gg(\xx^{\star})\geq \Omega(R^{1/(p+1)}r^{p/(p+1)})$.
\end{proof}

\end{document}